\documentclass[11pt]{amsart}
\hoffset         -0.6in
\voffset          -0.3in
\textwidth       6in
\textheight      8.8in

\usepackage{amsmath}
\usepackage{amsfonts}
\usepackage{amssymb}
\usepackage{amsthm}
\usepackage{latexsym,bm}
\usepackage{color}
\usepackage{graphicx}

\title[Ricci Curvature on Alexandrov spaces]{Ricci Curvature on Alexandrov spaces and Rigidity Theorems}
\author{Hui-Chun Zhang}
\address{Department of Mathematics\\  Sun Yat-sen University\\ Guangzhou 510275\\ E-mail address: zhhuich@mail2.sysu.edu.cn}
\author{Xi-Ping Zhu}
\address{Department of Mathematics\\  Sun Yat-sen University\\ Guangzhou 510275\\ E-mail address: stszxp@mail.sysu.edu.cn}
\newtheorem{thm}{Theorem}[section]
\newtheorem{prop}[thm]{Proposition}
\newtheorem{lem}[thm]{Lemma}

\newtheorem{cor}[thm]{Corollary}

\theoremstyle{definition}
\theoremstyle{remark}

\newtheorem{defn}[thm]{Definition}
\newtheorem{rem}[thm]{Remark}

\numberwithin{equation}{section}

\newcommand{\ua}{\uparrow}

\newcommand{\ls}{\leqslant}
\newcommand{\gs}{\geqslant}
\newcommand{\wa}{\widetilde\angle}
\newcommand{\wt}{\widetilde}

\newcommand{\ip}[2]{\left<{#1},{#2}\right>}

\newcommand{\be}[2]{\begin{#1}{#2}\end{#1}}
\newcommand{\R}{\mathbb{R}}

\begin{document}

\maketitle
\begin{abstract} In this paper, we introduce a new notion for lower  bounds of Ricci curvature on Alexandrov spaces,  and
extend Cheeger--Gromoll splitting theorem and Cheng's maximal diameter theorem to Alexandrov spaces under this Ricci curvature condition.
\end{abstract}
\section{Introduction}\label{intro}
Alexandrov spaces with curvature bounded below generalize
successfully the concept of lower  bounds  of sectional curvature
from Riemannian manifolds to singular spaces. The seminal paper [BGP]
and  the 10th chapter in the text book [BBI] provide excellent
introductions to this field. Many  important theorems in Riemannian
geometry had been extended to  Alexandrov spaces, such as  Synge's
theorem [Pet1], diameter sphere theorem [Per1], Toponogov splitting
theorem [Mi], etc.

However, many fundamental results in Riemannian geometry (for
example, Bishop--Gromov volume comparison theorem, Cheeger--Gromoll
splitting theorem and Cheng's maximal diameter theorem) assume only
the lower bounds on \emph{Ricci} curvature, not on sectional
curvature. Therefore, it is a very interesting question  how to
generalize the concept of lower  bounds of Ricci curvature from
Riemannian manifolds to singular spaces.

Perhaps the first concept of lower bounds of Ricci curvature on
singular spaces was given by Cheeger and Colding (see Appendix 2 in [CC2.I]).
They, in [CC1, CC2], studied Gromov--Hausdorff limit spaces of Riemannian manifolds
with Ricci curvature (uniformly) bounded below. Among other results
in [CC1], they proved the following rigidity theorem:
\begin{thm}  (Cheeger--Colding)

Let $M_i$ be a sequence of Riemannian manifolds and $M_i$ converges to $X$ in sense of Gromov--Hausdorff.\\
\indent (1)\  If $X$ contains a line and $Ric(M_i)\gs -\epsilon_i$ with $\epsilon_i\to0$, then $X$ is isometric to a direct product $\mathbb{R}\times Y$ over some length space $Y$.\\
\indent (2)\  If $Ric(M_i)\gs n-1$ and diameter of $M_i$ $diam(M_i)\to\pi,$  then $X$ is isometric to a spherical suspension $[0,\pi]\times_{\sin}Y$ over some length space $Y$.
\end{thm}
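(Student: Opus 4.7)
The plan is to prove both parts by establishing \emph{quantitative almost-rigidity} statements on each $M_i$ and then passing to the Gromov--Hausdorff limit. The common technical engine is the Bochner formula together with the Abresch--Gromoll excess estimate, both of which survive (with small error) under almost non-negative Ricci curvature.

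For part (1), the line in $X$ pulls back to a sequence of ``almost lines'' in $M_i$: pairs of points $p_i^\pm$ at GH-distance $\to\infty$ with $d(p_i^+,p_i^-)$ approaching the sum of distances to a basepoint. On each $M_i$ I would consider the associated pair of Busemann-type functions $b_i^\pm$ (or distance functions to $p_i^\pm$, renormalized), whose sum is the excess function $e_i=b_i^+ + b_i^-$. Under $\operatorname{Ric}\ge -\epsilon_i$, the Abresch--Gromoll inequality gives $e_i \to 0$ uniformly on bounded sets. A standard Laplacian comparison argument then shows $\Delta b_i^\pm\to 0$ in a weak (integrated) sense, and feeding this into the Bochner formula
\[
\tfrac{1}{2}\Delta|\nabla b_i^\pm|^2 = |\nabla^2 b_i^\pm|^2 + \ip{\nabla\Delta b_i^\pm}{\nabla b_i^\pm} + \operatorname{Ric}(\nabla b_i^\pm,\nabla b_i^\pm)
\]
forces $\int |\nabla^2 b_i^\pm|^2\to 0$ on compact regions (after multiplying by suitable cutoffs and integrating by parts). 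Hence $b_i^\pm$ are almost affine with almost unit gradient. The segment inequality then promotes this ``almost parallel'' gradient into an almost-Pythagoras relation on geodesics, which in the limit yields the splitting $X=\mathbb{R}\times Y$, where the $\mathbb{R}$-factor is realized by the limit of $b^+$.

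For part (2), I would run the same machinery with $p_i^\pm$ chosen as points realizing the (almost maximal) diameter of $M_i$ and with $b_i^\pm$ replaced by $\pi/2-d(\cdot,p_i^\pm)$ (or a $\cos$-composition thereof adapted to curvature $n-1$). The role of Abresch--Gromoll is played by the Myers-type excess estimate: when $\operatorname{Ric}\ge n-1$ and $d(p^+,p^-)\to\pi$, one gets $d(\cdot,p^+)+d(\cdot,p^-)\to\pi$. The warped Bochner computation then yields $|\nabla^2\cos d(\cdot,p^+) + \cos d(\cdot,p^+)\cdot g|^2\to 0$ in the integral sense, i.e. $d(\cdot,p^+)$ is almost a $\sin$-warped distance. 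Passing to the limit gives the spherical suspension structure $[0,\pi]\times_{\sin}Y$.

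The hard part is, in both cases, the passage from the integral smallness of the Hessian-type quantities on $M_i$ to a pointwise geometric conclusion on $X$. This is precisely where Cheeger--Colding's segment inequality is essential: it converts an $L^1$ smallness of a nonnegative function into control along almost all pairs of points connected by geodesics, which survives GH-convergence. Building and applying this inequality carefully, together with verifying that the limit metric on the cross-section $Y$ is genuinely a length metric (and not merely a pseudo-distance), is the main technical obstacle; the rest consists of elementary manipulations of the Bochner identity, Laplacian comparison and triangle inequalities.
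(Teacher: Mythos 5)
The paper does not prove this statement: it is Theorem~1.1 of the introduction, quoted as a known result of Cheeger and Colding with reference [CC1], and it serves only as background motivation for the paper's own Alexandrov-space analogues (Theorems~1.7 and~1.8, which are proved by entirely different, non-smooth methods). There is therefore no ``paper's own proof'' to compare against. That said, your outline is a faithful summary of how Cheeger and Colding actually argue in [CC1]: the almost-splitting theorem is obtained by combining the Abresch--Gromoll excess estimate, the integrated Bochner identity with cutoffs, the segment inequality to upgrade $L^1$ Hessian smallness to control along almost every geodesic, and then passage to the Gromov--Hausdorff limit; the almost-maximal-diameter theorem proceeds analogously with the $\cos$-composition adapted to the $\sin$-warped model. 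One caution on part~(2): the claim that $\operatorname{Ric}\ge n-1$ and $d(p^+,p^-)\to\pi$ directly yield $d(\cdot,p^+)+d(\cdot,p^-)\to\pi$ uniformly is not a one-line Myers-type estimate; in [CC1] the corresponding control is first established in an integral/volume sense via Bishop--Gromov and then promoted by the segment inequality, so the logical order there differs slightly from your sketch. You should also isolate, as a separate step, the verification that the limiting cross-section $Y$ inherits a genuine (intrinsic, non-degenerate) length metric rather than a pseudometric. These are refinements of bookkeeping rather than gaps in the strategy.
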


In [Pet4], Petrunin considered to generalize the lower bounds of Ricci curvature for singular spaces via subharmonic functions.

Recently, in terms of $L^2-$Wasserstein space and optimal mass transportation,  Sturm [S1, S2] and Lott--Villani  [LV1, LV2] have given a
 generalization of ``Ricci curvature has lower bounds" for metric measure spaces\footnote{A metric measure space is a metric space equipped a Borel measure.},
 independently. They call that curvature-dimension conditions, denoted by $CD(n,k)$ with  $n\in(1,\infty]$ and $k\in\mathbb{R}.$ For the convenience of readers,
 we repeat their definition of $CD(n,k)$ in the Appendix of this paper. On the other hand, Sturm in [S2] and Ohta in [O1] introduced  another definition of ``Ricci curvature bounded below" for metric measure spaces, the measure contraction property $MCP(n,k)$, which is a slight modification of a property introduced earlier by Sturm in [S3] and in a similar form by Kuwae and Shioya in [KS3, KS4].
 The condition $MCP(n,k)$  is indeed an infinitesimal version of the Bishop--Gromov relative volume comparison condition.
 For a  metric measure space, Sturm [S2] proved that $CD(n,k)$ implies $MCP(n,k)$ provided it is
 non-branching\footnote{A geodesic space is called non-branching if  for any quadruple points $z, x_0,x_1,x_2$ with $z$  being the midpoint of $x_0$ and $x_1$ as well as the midpoint of $x_0$ and $x_2$ , it follows that  $x_1=x_2$.}.
 Note that any Alexandrov space with curvature bounded below is non-branching. Recently, Petrunin [Pet2]
proved that any $n$-dimensional Alexandrov space with curvature $\gs
0$ must satisfy $CD(n,0)$ and claimed  the general statement that the condition curvature $\gs k$ (for some $k\in \R$) implies the condition $CD(n,(n-1)k) $ can be also proved along the same lines.

Let $M$ be a Riemannian manifold with Riemannian distance $d$ and Riemannian volume $vol$.  Lott, Villani in [LV1] and von Renesse, Sturm in [RS, S4] proved that $(M,d,vol)$ satisfies $CD(\infty,k)$ if and only if  $Ric(M)\gs k$. Indeed, they proved a stronger weighted version (see Theorem 7.3 in [LV1] and Theorem 1.1 in [RS], Theorem 1.3 in [S4]). Let $\phi$ be a smooth function on $M$ with $\int_Me^{-\phi}dvol=1$. Lott and Villani in [LV2] proved that $(M,d, e^{-\phi}\cdot vol)$ satisfies $CD(n,k)$ if and only if weighted Ricci curvature $Ric_n(M)\gs k$ (see Definition 4.20-- the definition of $Ric_n$-- and Theorem 4.22 in [LV2]). A similar result was proved by Sturm in [S2] (see Theorem 1.7 in [S2]). In particular, they proved that  $(M,d, vol)$ satisfies $CD(n,k)$ if and only if $Ric(M)\gs k$ and  $\dim(M)\ls n.$ If  $\dim(M)= n,$
  Ohta in [O1] and Sturm  in [S2] proved, independently, that $M$ satisfies $MCP(n,k)$
is equivalent to $Ric(M)\gs k$.

Nevertheless, since $n$-dimensional norm spaces $(V^n,\|\cdot\|_p)$ satisfy $CD(n,0)$ for every $p>1$ (see, for example, page 892 in [V]),
 it is impossible to show Cheeger-Gromoll splitting theorem under $CD(n,0)$ for general metric measure spaces. Furthermore, it was shown by
Ohta  in [O3] that on a Finsler manifolds $M$, the
curvature-dimension condition $CD(n,k)$ is equivalent to the weighted
Finsler Ricci curvature condition $Ric_n(M)\gs k$ (see also [O4] or  [OSt], refer to [O4] for the definition $Ric_n$ in Finsler manifolds). That says, the
curvature-dimension condition is somewhat a Finsler geometry
character. Seemly, it is difficult to
 show the rigidity theorems, such as Cheng's maximal diameter theorem and Obata's theorem, under $CD(n,n-1)$ for general metric measure spaces.

As a compensation, Watanabe [W] proved that if a metric measure space $M$ satisfies $CD(n,0)$ or $MCP(n,0)$ then $M$ has at most two ends.  Ohta  [O2] proved  that a non-branching compact metric measure space with $MCP(n,n-1)$ and diameter $=\pi$  is homeomorphic to a spherical suspension.

Alexandrov spaces with curvature bounded below have richer geometric
information than general metric measure spaces. In particular, a
finite dimensional norm space with curvature bounded below must be
an inner-product space. Naturally, one would expect that
Cheeger--Gromoll splitting theorem still holds on Alexandrov spaces
with suitable nonnegative ``Ricci curvature condition".

Recently in [KS1], Kuwae and Shioya  proved the following topological splitting theorem for Alexandrov spaces under the $MCP(n,0)$ condition:
\begin{thm} (Kuwae--Shioya)

 Let $M^n$ be an $n$-dimensional Alexandrov space. Assume that  $M^n$ contains a line.\\
\indent (1)\  If $M$ satisfies $MCP(n,0)$, then $M^n$ is homeomorphic to a direct product space $\mathbb{R}\times Y$ over some topological space $Y$.\\
 \indent(2)\  If the singular set of $M^n$ is closed and the non-singular set is an (incomplete) $C^\infty$ Riemannian manifold of $Ric\gs0$, then
$M^n$ is isometric to a direct product space $\mathbb{R}\times Y$ over some Alexandrov space $Y$.\end{thm}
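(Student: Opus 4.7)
The plan is to adapt the classical Cheeger--Gromoll argument via Busemann functions, with the $MCP(n,0)$ hypothesis playing the role of the Ricci lower bound in the analytic steps, and with the smooth structure on the non-singular set supplying the additional rigidity needed for (2).

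First I would fix a line $\gamma:\R\to M^n$ and define the two Busemann functions
\[
b^{\pm}(x)=\lim_{t\to\infty}\bigl(t-d(x,\gamma(\pm t))\bigr).
\]
They are $1$-Lipschitz, and the triangle inequality gives $b^+ + b^- \ls 0$ everywhere with equality on $\gamma$. The central analytic step is to show that, under $MCP(n,0)$, each Busemann function is \emph{superharmonic} in a measure-theoretic sense. This is where the condition enters: $MCP(n,0)$ is the infinitesimal Bishop--Gromov inequality along contracting geodesics, and it yields a weak Laplace comparison $\Delta\,d(\cdot,\gamma(\pm t))\ls (n-1)/d(\cdot,\gamma(\pm t))$; letting $t\to\infty$ gives $\Delta b^{\pm}\ls 0$ weakly. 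A strong maximum principle in the $MCP$ setting (via parabolic comparison or Dirichlet energy minimizers) then forces $b^+ + b^-\equiv 0$, so $b:=b^+=-b^-$ behaves as a ``harmonic'' $1$-Lipschitz function with $|\nabla b|\equiv 1$.

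For the topological splitting in (1), I would study the gradient flow $\Phi_t$ of $b$. The functions $-b^-$ and $b^+$ are semiconcave as uniform limits of the semiconcave functions $t-d(\cdot,\gamma(\pm t))$, so Perelman--Petrunin gradient-flow theory on Alexandrov spaces produces continuous flows $\Phi^{\pm}_t$; the relation $b^+=-b^-$ forces $\Phi^+_t$ and $\Phi^-_t$ to be mutual inverses, yielding a genuine $\R$-action $\Phi_t$. Setting $Y:=b^{-1}(0)$, the map $(t,y)\mapsto \Phi_t(y)$ is a continuous bijection $\R\times Y\to M^n$ with continuous inverse $x\mapsto (b(x),\Phi_{-b(x)}(x))$, giving the desired homeomorphism.

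For the isometric splitting in (2), on the open regular part $M^n\setminus S$ the function $b$ is a smooth harmonic function with $|\nabla b|\equiv 1$ on a Riemannian manifold of $\mathrm{Ric}\gs 0$, so Bochner's formula
\[
\tfrac12 \Delta |\nabla b|^2 = |\mathrm{Hess}\,b|^2 + \mathrm{Ric}(\nabla b,\nabla b)
\]
forces $\mathrm{Hess}\,b\equiv 0$; hence $\nabla b$ is parallel and $\Phi_t$ acts by local isometries off $S$. The hard part, and the main obstacle, is to upgrade this to a global isometric product: one must verify that the flow $\Phi_t$ supplied by (1) preserves the singular set $S$, that its restriction to $M^n\setminus S$ is not merely a local but a global isometry, and that the induced length metric on $Y=b^{-1}(0)$ is itself Alexandrov. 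Closedness of $S$ makes $M^n\setminus S$ open (so Bochner applies on all of it), density propagates the smooth product structure to $M^n$ via the length-space axioms, and the non-branching property together with $MCP(n,0)$ prevents pathological behavior of $\Phi_t$ when trajectories approach $S$; combining these one concludes that $\R\times Y\to M^n$, $(t,y)\mapsto \Phi_t(y)$, is a bijective isometry.
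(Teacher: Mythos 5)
The paper does not prove this statement. Theorem~1.2 is quoted verbatim as a result of Kuwae and Shioya, with the proof residing in their papers [KS1] and [KS2]; the present paper only uses a small piece of it, namely the identity $b_+ + b_- \equiv 0$ for Busemann functions under $MCP(n,0)$, which is invoked as Lemma~6.1 and attributed explicitly to ``Lemma 6.5 and the proof of Theorem 1.3 in [KS1].'' So there is no proof in the paper to compare against; your proposal is necessarily being measured against the literature, not this text.

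That said, your sketch does track the actual Kuwae--Shioya strategy in broad strokes: a Laplacian comparison under $MCP(n,0)$ (they derive it from the Bishop--Gromov-type condition $BG(k)$ that $MCP$ implies), the maximum-principle argument for $b_++b_-\equiv 0$, and a Busemann gradient flow to split. The weak points are concrete, though. For part~(1), your construction of an honest $\R$-action presumes the forward and backward flows are mutual inverses; gradient flows of merely semiconcave functions on Alexandrov spaces are semiflows, and the inverse needs to come from the geometric fact that through every point there is a line biasymptotic to $\gamma$ (a consequence of $b_++b_-\equiv 0$ plus nonbranching), not just from the formal identity $b_+=-b_-$. You should also note that $MCP(n,0)$ gives only a Laplacian inequality, not concavity of $b_+$ (which is exactly why (1) is topological and not isometric), so the Petrunin gradient-flow machinery is applied to a semiconcave function whose Hessian is controlled only on average. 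For part~(2), you correctly identify the gaps (that $\Phi_t$ preserves the singular set, that a local isometry off $S$ globalizes, and that $Y$ inherits an Alexandrov structure) but leave all three open; the sentence ``combining these one concludes'' does not close them. As it stands, the proposal is a reasonable outline of the published argument rather than a proof, and it cannot be validated against the paper under review because the paper contains no proof of Theorem~1.2.
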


We remark that Kuwae and Shioya actually  obtained  a  more general weighted measure version of the above theorem  in [KS2].

 In the following, inspired by  Petrunin's second variation of arc length [Pet1], we will introduce a new notion of the Ricci curvature bounded below for Alexandrov spaces.

Let $M$ be  an $n-$dimensioal Alexandrov space of curvature bounded from below \emph{locally} without boundary.  It is well known in [PP] or [Pet3] that, for any $p\in M$ and $\xi\in \Sigma_p$,  there exists a \emph{quasi-geodesic} starting at $p$ along direction $\xi.$ (See [PP] or section 5 in [Pet3] for the definition and properties of quasi-geodesics.) According to [Pet1],
the exponential map
$\exp_p: T_p\rightarrow M$ is defined as follows.  For any  $v\in T_p$,  $\exp_p(v)$ is a point on some  quasi-geodesic of length $|v|$ starting point $p$ along $v/|v|\in \Sigma_p$. If the quasi-geodesic is not unique, we take one of them as the definition of $\exp_p(v).$

 Let $\gamma:\ [0,\ell)\rightarrow M$ be a geodesic. Without loss of generality, we may assume that a neighborhood  $U_\gamma$ of $\gamma$ has curvature $\gs k_0$ for some $k_0<0$.

 According to Section 7 in [BGP], the tangent cone $T_{\gamma(t)}$ at an interior point $\gamma(t) \ (t \in (0,\ell))$ can be split  into a direct metric product. We denote \begin{equation*}\begin{split}L_{\gamma(t)}&=\{\xi\in T_{\gamma(t)}\ |\ \angle(\xi,\gamma^+(t))=\angle(\xi,\gamma^-(t))=\pi/2\},\\
\Lambda_{\gamma(t)}&=\{\xi\in \Sigma_{\gamma(t)}\ |\
\angle(\xi,\gamma^+(t))=\angle(\xi,\gamma^-(t))=\pi/2\}.
\end{split}\end{equation*}

In [Pet1], Petrunin proved the following second  variation formula of arc-length.
\begin{prop}(Petrunin)

Given any two points $q_1,q_2\in\gamma$, which are not end points,
and any positive number sequence $\{\varepsilon_j\}^\infty_{j=1}$
with $\varepsilon_j\to 0$, there exists a subsequence
$\{\wt\varepsilon_j\}\subset \{\varepsilon_j\}$ and an isometry $T:\
L_{q_1}\rightarrow\ L_{q_2}$ such that
\begin{align*}|\exp_{q_1}(\wt\varepsilon_j u),\ \exp_{q_2}(\wt\varepsilon_j Tv)|\ls & |q_1q_2|+ \frac{|uv|^2}{2|q_1q_2|}\cdot\wt\varepsilon^2_j\\&-\frac{k_0\cdot|q_1q_2|}{6}\cdot\big(|u|^2+|v|^2+\langle u,v\rangle\big)\cdot\wt\varepsilon_j^2+o(\wt\varepsilon_j^2)\end{align*}for any $u,v\in L_{q_1}.$
\end{prop}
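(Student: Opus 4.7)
The plan is to reduce the inequality to a Jacobi-field computation in the model 2-surface $M_{k_0}^2$ of constant curvature $k_0$, combine it with Toponogov's comparison theorem inside the curvature $\gs k_0$ neighborhood $U_\gamma$, and produce the isometry $T$ by an iterated (discrete) ``parallel transport'' along $\gamma$.

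\emph{Step 1 (model expansion).} In $M_{k_0}^2$ fix a unit-speed geodesic $\bar\gamma$ of length $L:=|q_1q_2|$. For perpendicular vectors $\bar u,\bar v$ at its endpoints, identified by parallel transport, the second-order expansion of $|\bar\exp_{\bar q_1}(\epsilon\bar u),\bar\exp_{\bar q_2}(\epsilon\bar v)|$ is obtained by interpolating $\bar u,\bar v$ with a Jacobi field $J$ along $\bar\gamma$ (solving $J''+k_0 J=0$) and computing the length of the family $t\mapsto\bar\exp_{\bar\gamma(t)}(\epsilon J(t))$: the result is $L+\tfrac{\epsilon^2}{2}\int_0^L(|J'|^2-k_0|J|^2)\,dt+o(\epsilon^2)$. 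Evaluating the integral on the explicit interpolating Jacobi field produces exactly
\[L+\frac{|\bar u-\bar v|^2}{2L}\epsilon^2-\frac{k_0 L}{6}\bigl(|\bar u|^2+|\bar v|^2+\langle\bar u,\bar v\rangle\bigr)\epsilon^2+o(\epsilon^2).\]

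\emph{Step 2 (construction of $T$).} Partition $\gamma|_{[q_1,q_2]}$ into $N$ equal sub-arcs with division points $q_1=p_0,p_1,\dots,p_N=q_2$. At each interior $p_i$, the tangent cone splits as $T_{p_i}=\R\gamma'(p_i)\oplus L_{p_i}$ with each $L_{p_i}$ an \emph{honest Euclidean space} of a common finite dimension. Between consecutive factors $L_{p_i}$ and $L_{p_{i+1}}$, Toponogov comparison applied to small hinges based over the short segment $[p_i,p_{i+1}]$ produces an approximate isometry $T_i:L_{p_i}\to L_{p_{i+1}}$; composition yields $T^{(N)}:=T_{N-1}\circ\cdots\circ T_0:L_{q_1}\to L_{q_2}$. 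Each $T^{(N)}$ lies in the \emph{compact} isometry group of a fixed Euclidean space, so a diagonal extraction in $j$ and $N=N_j\to\infty$ (with $N_j$ chosen so that the accumulated Toponogov errors decay to $o(\varepsilon_j^2)$) yields a subsequence $\{\wt\varepsilon_j\}$ and a limit isometry $T$, independent of $u$ and $v$.

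\emph{Step 3 (Toponogov upper bound).} For fixed $u,v\in L_{q_1}$, apply Toponogov's comparison in $U_\gamma$ to the configuration of hinges based at $q_1$ and $q_2$ with sides $\wt\varepsilon_j u$, the segment $q_1q_2$, and $\wt\varepsilon_j Tv$. The curvature $\gs k_0$ hypothesis bounds the distance $|\exp_{q_1}(\wt\varepsilon_j u),\exp_{q_2}(\wt\varepsilon_j Tv)|$ above by the corresponding diagonal in $M_{k_0}^2$; that model diagonal is precisely the expansion of Step 1, up to an $o(\wt\varepsilon_j^2)$ error absorbed from Step 2. This delivers the stated inequality.

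\emph{Main obstacle.} The crux is Step 2: one must show the composed maps $T^{(N)}$ sub-converge to a \emph{genuine} isometry of $L_{q_1}$ onto $L_{q_2}$, not merely to a short or $(1+o(1))$-Lipschitz map, while simultaneously keeping the accumulated curvature error below $o(\wt\varepsilon_j^2)$ after $N_j$ iterations. This exploits crucially that the spaces $L_{p_i}$ are honestly Euclidean at interior points of a geodesic (a consequence of the splitting theorem for tangent cones in [BGP]), so that two-sided angle comparisons on a sufficiently small scale force the limiting correspondence to preserve distances in both directions.
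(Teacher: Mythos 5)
The paper does not prove this proposition itself; it is quoted verbatim from Petrunin's paper [Pet1] ("Parallel transportation for Alexandrov spaces with curvature bounded below"), so there is no in-paper proof to compare against. Evaluating your sketch on its own merits, Step 1 is a standard second-variation/Jacobi-field computation in the model surface and is fine. The real content is in Steps 2 and 3, and both have gaps.

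Regarding Step 2, you assert that the splitting of tangent cones along the interior of a geodesic (Section 7 of [BGP]) makes each $L_{p_i}$ an ``honest Euclidean space.'' That splitting only yields $T_{p_i}\cong\R\times L_{p_i}$ as a direct metric product; it does not say that the orthogonal factor $L_{p_i}$ is Euclidean. In general $L_{p_i}$ is a cone over some $(n-2)$-dimensional Alexandrov space of curvature $\geqslant 1$, which need not be the round sphere. Your compactness argument (maps $T^{(N)}$ lying in $O(n-1)$) depends on this Euclidean structure and therefore does not start. Moreover, even granting Euclidean $L_{p_i}$, the crux you flag in your ``Main obstacle'' paragraph is real: Toponogov applied to a short hinge does not hand you an approximate \emph{isometry} between two abstract Euclidean factors, only distance estimates that must be painstakingly upgraded; this upgrade is the actual content of Petrunin's construction and cannot be compressed to ``Toponogov produces an approximate isometry.''

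Step 3 contains the more serious gap. Toponogov's comparison in a space with curvature $\geqslant k_0$ bounds the third side of a \emph{hinge}, or the interior distances of a \emph{triangle}, relative to $M^2_{k_0}$. It does not directly bound the diagonal of a ``Lambert quadrilateral'' built from two perpendiculars erected at the endpoints of $q_1q_2$. If you try to decompose the diagonal via triangle inequalities (for instance $|\exp_{q_1}(\varepsilon u),\,\exp_{q_2}(\varepsilon Tv)|\leqslant |\exp_{q_1}(\varepsilon u),\,q_2|+\varepsilon|v|$), you lose the cancellation of first-order terms and obtain only an $O(\varepsilon)$ estimate, not the required $O(\varepsilon^2)$ bound with the sharp Jacobi-field coefficient. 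The actual upper bound in Petrunin's proof comes from constructing a \emph{comparison curve}—a broken geodesic sweeping out $\exp_{\gamma(t)}(\varepsilon J(t))$ along the discrete near-parallel field underlying the construction of $T$—and estimating the total length of that curve by summing short-scale Toponogov-type estimates over the subdivision. This length estimate is inseparable from the parallel-transport construction; it is not a separate, modular application of Toponogov to a single quadrilateral, and the accumulated errors must be shown to be $o(\varepsilon_j^2)$ \emph{after} the subsequence $\{\wt\varepsilon_j\}$ is chosen. This is precisely why the proposition asserts only subsequential validity, and why Cao--Dai--Mei's improvement (removing the need for a subsequence in dimension $2$) is a genuine strengthening rather than a formality.
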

We remark that for a $2-$dimensional Alexandrov space, Cao, Dai and Mei in [CDM] improved the second variation formula such that the above inequality holds for all $\{\varepsilon_j\}_{j=1}^\infty.$ But for higher dimensions, to the best of our knowledge, we don't know whether the parallel translation $T$ in the above second variation formula can be chosen independent of the sequences $\{\varepsilon_j\}.$

Based on this second variation formula, we can propose a condition which resembles the lower bounds for the radial curvature along the geodesic $\gamma$.

  Let $\{g_{\gamma(t)}\}_{0<t<\ell}$ be a family of functions, where for each $t$, $g_{\gamma(t)}$ is a continuous function on $\Lambda_{\gamma(t)}$. For simplicity, we call  $\{g_{\gamma(t)}\}_{0<t<\ell}$ to be a \emph{continuous function family}.
\begin{defn}
A continuous  function family  $\{g_{\gamma(t)}\}_{0<t<\ell}$ is said to satisfy \emph{Condition (RC)}, if
for any $\epsilon>0$ and any $t_0\in(0,\ell)$, there exists a neighborhood  $I_{t_0}:=(t_0-\tau^*,t_0+\tau^*)\subset(0,\ell)$ with the following property.
For any two number $s,t\in I_{t_0}$ with $s<t$ and for any sequence $\{\theta_j\}_{j=1}^\infty$ with $\theta_j\to 0$ as $j\to\infty$, there exists an isometry $T: \Lambda_{\gamma(t)}\rightarrow \Lambda_{\gamma(s)}$ and a subsequence $\{\delta_j\}$ of $\{\theta_j\}$ such that
\begin{equation}\begin{split} |\exp&_{\gamma(s)}(\delta_j l_1T\xi),\ \exp_{\gamma(t)}(\delta_j l_2\xi)|\\ \ls  &|s-t|+\frac{(l_1-l_2)^2}{2|s-t|}\cdot\delta_j^2\\
&-\frac{\big(g_{\gamma(t)}(\xi)-\epsilon\big)\cdot|s-t|}{6}\cdot\big(l_1^2+l_1\cdot l_2+l^2_2\big)\cdot\delta^2_j+o(\delta_j^2)\end{split}\end{equation} for any $l_1,l_2\gs0$ and any $\xi\in \Lambda_{\gamma(t)}$.
\end{defn}

Let  $\mathcal{F}$ denote the set all of continuous  function
families$\{g_{\gamma(t)}\}_{0< t<\ell}$,  which satisfy
Condition $(RC)$.

Clearly, the above proposition shows that  $\{g_{\gamma(t)}=k_0\}_{0<t<\ell}\in \mathcal{F}$.
\begin{defn} We say that $M$ has \emph{Ricci curvature bounded below by $ (n-1)K$ along $\gamma$},
if \begin{equation}\rho:=\sup_{\{g_{\gamma(t)}\}\in\mathcal F}\inf_{0<t<\ell }\oint_{\Lambda_{\gamma(t)}}g_{\gamma(t)}(\xi)\gs K,\end{equation}
where $\oint_{\Lambda_x}g_x(\xi)=\frac{1}{vol(\Lambda_x)}\int_{\Lambda_x}g_x(\xi)d\xi$.

 We say $M$ has \emph{ Ricci curvature  bounded below by $(n-1)K$ on an open set $U\subset M$}, if for each point $p\in U$, there is a neighborhood $U_p$ of $p$ with $U_p\subset U$ such that  $M$ has Ricci curvature bounded below by $ (n-1)K$ along every geodesic $\gamma:[0,\ell)\rightarrow U_p$.
 When $U=M$, we say $M$ has Ricci curvature bounded below by $(n-1)K$ and denote $Ric(M)\gs(n-1)K$.
 \end{defn}
\begin{rem} (i)  When $M$ is a smooth Riemannian manifold, by the second variation of formula of arc-length, it is easy to see Condition $(RC)$
is equivalent to $$sec_M(\Pi_t)\gs g_{\gamma(t)}(\xi),$$ where $\Pi_t\subset T_{\gamma(t)}$ is any $2-$dimensional subspace,  spanned  by $\gamma'(t)$ and a $\xi\in \Lambda_{\gamma(t)}$. Thus in a Riemannian manifold, our definition on Ricci curvature bounded below by $(n-1)K$ is exactly the  classical one.\\
\indent (ii) Let $M$ be an $n$-dimensional Alexandrov space  with curvature $\gs K$. The above Proposition 1.3 shows that  $Ric(M)\gs(n-1)K$.\\
\indent (iii) Recall that Petrunin in [Pet2] proved any  $n$-dimensional Alexandrov space $M$ with curvature $\gs K$ must satisfy the curvature-dimension condition $CD(n,(n-1)K)$. In the appendix, by modifying Petrunin's proof in [Pet2], we will show that any $n$-dimensional Alexandrov space $M$ with $Ric(M)\gs (n-1)K$ also satisfies $CD(n,(n-1)K)$.\\
 \indent (iv) At the present stage, we don't know  if the Ricci curvature condition $Ric(M)\gs(n-1)K$ is equivalent to the curvature-dimension condition $CD(n,(n-1)K).$ We will investigate this question in future.
\end{rem}

Our main results in this paper are the following  splitting theorem and  maximal diameter theorem.
\begin{thm} (Splitting theorem)

Let $M$ be an $n$-dimensional complete non-compact Alexandrov space
with  nonnegative Ricci curvature and $\partial M=\varnothing$. If
$M$ contains a line, then  $M$ is isometric to a direct metric
product $\mathbb{R}\times N$ for some Alexandrov space $N$ with nonnegative Ricci curvature.
\end{thm}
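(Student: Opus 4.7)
The plan is to adapt the classical Cheeger--Gromoll argument, with Petrunin's second variation formula — averaged via Condition (RC) — playing the role of the Bochner identity. Fix a line in $M$ and let $\gamma^\pm:[0,\infty)\to M$ be the two rays it produces, sharing the common base point $\gamma^+(0)=\gamma^-(0)$. Define the Busemann functions
\[
b^\pm(x)=\lim_{t\to\infty}\bigl(t-|x\gamma^\pm(t)|\bigr),
\]
which are $1$-Lipschitz on $M$. Since $|\gamma^+(t)\gamma^-(t)|=2t$, the triangle inequality yields $b^++b^-\ls 0$ everywhere, with equality along the line. The goal is to upgrade this pointwise inequality to the global identity $b^++b^-\equiv 0$ and then extract a metric splitting.

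The analytic heart is a weak Laplace comparison $\Delta b^\pm\gs 0$. Starting from the approximants $f_t(x):=t-|x\gamma^\pm(t)|$ and a minimizing segment $\sigma$ from $x$ to $\gamma^\pm(t)$, Proposition 1.3 combined with Condition (RC) applied along $\sigma$ controls the radial-curvature contribution to the second variation. Averaging that inequality over the perpendicular sphere $\Lambda_{\sigma(s)}$ and using $Ric(M)\gs 0$ — i.e.\ the existence of an (RC)-family $\{g_{\sigma(s)}\}\in\mathcal{F}$ with nonnegative spherical mean — produces a distance-function Laplace comparison $\Delta |x\gamma^\pm(t)|\ls (n-1)/|x\gamma^\pm(t)|$ in a suitable weak or barrier sense, exactly paralleling the classical Riemannian argument that traces the Riccati equation. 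Equivalently $\Delta f_t\gs -(n-1)/|x\gamma^\pm(t)|$, and letting $t\to\infty$ gives $\Delta b^\pm\gs 0$ weakly on $M$.

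Once both $b^\pm$ are weakly subharmonic, the sum $b^++b^-$ is subharmonic, pointwise $\ls 0$, and attains the value $0$ on the entire line. The Alexandrov strong maximum principle (available in the Kuwae--Machigashira--Shioya weak Laplacian framework) forces $b^++b^-\equiv 0$ on $M$, so $b:=b^+=-b^-$ is both sub- and superharmonic, hence harmonic. To upgrade harmonicity to a metric splitting, I would inspect the equality case: harmonicity of $b$ forces the averaged radial-curvature inequality to be sharp along every minimizing approach to infinity, which in turn forces $|\nabla b|\equiv 1$ and each gradient curve of $b$ to be a unit-speed line. Petrunin's gradient-flow formalism then provides a one-parameter family of isometries $\Phi_t:N\to b^{-1}(t)$ with $N:=b^{-1}(0)$, yielding an isometric decomposition $M\cong\mathbb{R}\times N$. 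The induced Alexandrov structure on $N$ inherits $Ric\gs 0$: any geodesic $\eta$ in $N$ corresponds to $(0,\eta)$ in $\{0\}\times N\subset M$, whose perpendicular sphere factors compatibly with the product, so an (RC)-family along $(0,\eta)$ restricts to an (RC)-family along $\eta$ with no loss of spherical average.

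The principal obstacle I expect is the analytic step above. Condition (RC) only controls averages along a single fixed geodesic within a small neighborhood of each interior point, yet one must run this locally along every minimizing segment $\sigma$ to $\gamma^\pm(t)$, uniformly in $t$, and then commute the resulting weak Laplace inequality with the Busemann limit. A secondary difficulty is the rigidity step: in the absence of a smooth Bochner identity, promoting harmonicity of $b$ to genuine affineness must rely entirely on an equality discussion in the averaged second variation formula together with Petrunin's gradient flow, and one must verify that these tools suffice to identify the gradient curves of $b$ with lines realizing the splitting.
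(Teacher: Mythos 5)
Your outline follows the classical Cheeger--Gromoll template, as does the paper, but the two executions diverge in an important way at the very first step, and the rigidity step that you flag as your ``secondary difficulty'' is where the paper's real technical content lies and where your sketch is not yet a proof.

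On the route to $b_++b_-\equiv 0$: you propose to first establish distributional subharmonicity $\Delta b_\pm\gs 0$ and then invoke a strong maximum principle. The paper instead imports $b_++b_-\equiv 0$ directly from Kuwae--Shioya (their Lemma 6.5 for $MCP(n,0)$, which your Ricci condition implies via $CD(n,0)$). This ordering is not cosmetic. The paper's Laplacian representation $\Delta^\theta b_+$ is defined by averaging $H^\theta_x b_+$ over the perpendicular sphere $\Lambda_x$, and $\Lambda_x$ only makes sense once one knows that a line passes through $x$ on which $b_+$ is affine (Lemma 6.2), which in turn requires $b_++b_-\equiv 0$. So the paper must have $b_++b_-\equiv 0$ before it can even write $\Delta^\theta b_+$, let alone prove $\Delta^\delta b_+\ls 0$ (Lemma 6.3, which supports $b_+$ from above by $dist_p+\mathrm{const}$ with $p$ receding along $\gamma_x$). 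You would need to verify a maximum principle for the $W^{1,2}$-Dirichlet Laplacian on an Alexandrov space and justify the passage to the Busemann limit inside that framework; the paper deliberately sidesteps this by working with the pointwise sequential representation $\Delta^\theta$.

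On rigidity: ``harmonicity of $b$'' in the distributional sense does not directly yield what you need. In the paper's setup, from $\Delta^{\delta'}b_+(x)=0$ for \emph{every} further subsequence $\delta'$ one first extracts $H^\delta_x b_+=0$ for a.e.\ $\xi\in\Lambda_x$ (the analogue of Lemma 4.1, done by the explicit choice of test parameter $l(\xi)$ in the inequality from Condition (RC)); this is then spread to a.e.\ geodesic via the Cheeger--Colding segment inequality (Proposition 4.2) and upgraded to genuine concavity of $b_+$ using 1.3(3)--(4) of [PP] (Lemma 4.4, adapted to $b_+$ in Lemma 6.4). Only with $b_+$ and $b_-=-b_+$ both concave does $b_+$ become affine along every geodesic, the level sets totally geodesic, and the gradient flows $\Phi_a$ and $\Phi_a^{-1}$ both $1$-Lipschitz, hence isometries. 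Your sketch conflates two different facts: $|\nabla b|\equiv 1$ is elementary once lines pass through every point, but it does not imply that the level sets are totally geodesic; the concavity of $\pm b$ is the genuine substitute for the Bochner identity, and it is exactly this chain of lemmas that your proposal leaves as a gesture rather than an argument. Finally, the claim that $N$ inherits nonnegative Ricci ``with no loss of spherical average'' also requires the explicit computation in the paper's proof of Theorem 1.7, where an (RC)-family on the $(n-2)$-sphere of $N$ is produced from one on the $(n-1)$-sphere of $M=N\times\mathbb{R}$ by integrating out the $\mathbb{R}$-angle; that averaging produces a dimensional constant $c_n$ and a correction factor $(1+A^2)$ and is not a mere restriction.
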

 \begin{thm}(Maximal diameter theorem)

   Let $M$ be an $n$-dimensional compact Alexandrov space with  Ricci curvature bounded below by $n-1$ and $\partial M=\varnothing$. If the diameter of $M$ is $\pi$, then $M$ is isometric to a spherical suspension  over  an Alexandrov space with curvature $\gs1$.
\end{thm}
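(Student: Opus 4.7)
The plan is to mirror the classical proof of Cheng's theorem, with smooth Hessian/Laplacian calculus replaced by its Alexandrov counterpart built from Petrunin's second variation formula (Proposition 1.3) and the Ricci bound of Definition 1.5. First I would pick $p,q\in M$ with $|pq|=\pi$ and set $f(x):=\cos|px|+\cos|qx|$. The triangle inequality forces $f\ls 0$ on $M$, with equality at $p$ and $q$. The heart of the argument is a Laplacian comparison for $d_p:=|p\,\cdot\,|$: under $Ric(M)\gs n-1$ one should obtain $\Delta d_p\ls(n-1)\cot d_p$ in a suitable weak sense, established by averaging the second variation inequality (1.1) over the transverse directions $\xi\in\Lambda_{\gamma(t)}$ and invoking the mean lower bound $\oint g_{\gamma(t)}\gs 1$ coming from (1.2); this is the infinitesimal form of the Bishop--Gromov inequality implied by $CD(n,n-1)$ (cf.\ Remark 1.6(iii)). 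The chain rule then yields $\Delta(\cos d_p)+n\cos d_p\gs 0$ weakly, and symmetrically for $d_q$, so $\Delta f+nf\gs 0$. Since $\partial M=\varnothing$ and $M$ is compact, integration over $M$ gives $n\int_M f\gs 0$; combined with $f\ls 0$ and the continuity of $f$, this forces $f\equiv 0$, i.e.\ $|px|+|qx|=\pi$ for every $x\in M$.

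Consequently every point of $M$ lies on a minimizing geodesic of length $\pi$ from $p$ to $q$, producing a topological suspension structure with equator $\Sigma:=d_p^{-1}(\pi/2)$. To upgrade this to an isometry I would: (a) verify that $\Sigma$ is an Alexandrov space of curvature $\gs 1$ by applying Toponogov-type comparison inside $\Sigma$, using the saturation of inequality (1.1) at $t=\pi/2$; (b) check the spherical cosine law on $M$. For (b), given $x,y\in M$ with ``polar coordinates'' $(s,\bar x)$ and $(t,\bar y)$, apply Proposition 1.3 along a minimizing $p$--$q$ segment $\gamma$ at the two interior points $\gamma(s),\gamma(t)$; the rigidity $f\equiv 0$ forces (1.1) to be saturated with parallel transport an isometry, which, after integrating along the coordinate geodesics emanating from $p$, yields $\cos|xy|=\cos s\cos t+\sin s\sin t\cos d_\Sigma(\bar x,\bar y)$, i.e.\ exactly the spherical suspension distance.

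The principal obstacle is the first step in the singular setting: the weak Laplacian (interpreted via barrier functions, the Perelman--Petrunin approach, or the Kuwae--Shioya Dirichlet-form framework) must be set up so that global integration by parts on $M$ is legitimate and the cut locus of $p$ contributes no boundary term. The isometric (as opposed to merely topological) identification with the spherical suspension is also delicate, since it requires promoting the infinitesimal rigidity encoded in (1.1) to global metric comparisons --- an argument with no direct smooth counterpart, paralleling in synthetic language the rigidity step of the Cheeger--Colding theorem but carried out here entirely within the Alexandrov framework via parallel transport along geodesics from $p$.
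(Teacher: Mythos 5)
Your plan shares the skeleton of the paper's proof — derive a Laplacian comparison $\Delta d_p \ls (n-1)\cot d_p$ from the second variation formula and the Ricci bound, conclude $|px|+|qx|\equiv\pi$, recognize $N=d_p^{-1}(\pi/2)$ as an equator, and prove the spherical cosine law — but it diverges at two points. For $|px|+|qx|\equiv\pi$, where you integrate $\Delta f + nf \gs 0$ (with $f=\cos d_p+\cos d_q$) over the closed space, the paper instead reads it directly off Bishop--Gromov volume comparison (Corollary A.3), which is cheaper. Your route can be made rigorous, and you correctly flag the main danger: $\Delta^\theta$ as defined in Section 3 is only a representative of the \emph{absolutely continuous} part of the canonical Laplacian, so one must separately note that the singular part of $\Delta(1-\cos d_p)$ is non-positive (Petrunin's estimate for semi-concave functions), that the pointwise comparison of Theorem 3.3 holds a.e. for the ac part, and that the cut locus contributes nothing to the total mass. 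The paper sidesteps all this.

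The real gap is in your step (a). Claiming that $\Sigma$ has curvature $\gs 1$ ``by applying Toponogov-type comparison inside $\Sigma$, using the saturation of inequality (1.1) at $t=\pi/2$'' is not an argument: the saturation of (1.1) at a single parameter value does not, by itself, produce a comparison inequality for triangles with all three vertices in the equator. The paper proceeds quite differently and you should adopt its order. It first upgrades the Laplacian rigidity to the exact concavity $(-\cos f\circ\sigma)'' = \cos f\circ\sigma$ via Lemma 4.4 (segment inequality plus $1.3$(3)--(4) of [PP]), uses Lemma 4.5 to get rigidity of comparison angles at $p$, deduces that $N$ is totally geodesic and that the spherical cosine law $\cos|xy|=\cos|px|\cos|py|+\sin|px|\sin|py|\cos\wa_1 v_xpv_y$ holds — and only \emph{then} obtains curvature $\gs 1$ of $N$ by exhibiting the map $\Phi\colon N\to\Sigma_p$, $v\mapsto\ua_p^v$, as a surjective isometry onto the space of directions at $p$, which is automatically an Alexandrov space of curvature $\gs 1$. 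This $N\cong\Sigma_p$ identification is precisely the device you are missing; without it (or the equivalent fact that a lower curvature bound on a $\sin$-suspension forces curvature $\gs 1$ on the base) your outline does not close.
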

 An open question for the curvature-dimension condition $CD(n,k)$($k\not=0$) is ``from local to global" (See, for example, the 30th chapter in [V]).
 In particular, given a metric measure  space which admits a covering and   satisfies $CD(n,k)$ ($k\not=0$), we don't know if  the covering space with pullback  metric still satisfies  $CD(n,k)$.

 One advantage of our definition of the Ricci curvature bounded below on Alexandrov spaces is that the definition is purely local.
 In particular, any covering space of an $n$-dimensional Alexandrov space with Ricci curvature bounded below by $(n-1)K$ still satisfies the condition $Ric\gs(n-1)K.$
 Meanwhile, we note that  Bishop--Gromov volume comparison theorem also holds on an Alexandrov space with Ricci curvature bounded below
 (see Corollary A.3 in Appendix). Consequently, the same proofs as in Riemannian manifold case (see [A] and, for example, page 275-276 in [P]) give the
 following estimates on the fundamental group and the first Betti number.

\begin{cor}Let $M$ be a compact $n$-dimensional Alexandrov space with nonnegative Ricci curvature and $\partial M=\varnothing$. Then its fundamental group has a finite index Bieberbach subgroup.
\end{cor}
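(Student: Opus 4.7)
The plan is to mirror the classical Cheeger--Gromoll argument from the Riemannian case (cf.\ [A] and [P, pp.\ 275--276]), with our splitting theorem (Theorem 1.7) and the Bishop--Gromov volume comparison (Corollary A.3) furnishing the two central ingredients. The proof will proceed in three stages.

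\textbf{Step 1: Pass to the universal cover.} Since Definition 1.9 of $Ric\gs 0$ is purely local, the universal cover $\wt M$ of $M$, endowed with the pulled-back length metric, is again an $n$-dimensional Alexandrov space with $\partial\wt M=\varnothing$ and $Ric(\wt M)\gs 0$. The group $\pi_1(M)$ acts on $\wt M$ freely, properly discontinuously and cocompactly by isometries. If $\pi_1(M)$ is finite there is nothing to prove, so we assume $\pi_1(M)$ is infinite and hence $\wt M$ is noncompact.

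\textbf{Step 2: Iterate the splitting theorem.} A cocompact isometric action of an infinite group on a noncompact complete length space forces the space to contain a line: given a ray $\sigma:[0,\infty)\to\wt M$, choose $\gamma_j\in\pi_1(M)$ so that $\gamma_j\sigma(j)$ lies in a fixed compact fundamental domain, and extract a subsequential limit of the translated rays $\gamma_j\circ\sigma$, which is a two-sided line. Theorem 1.7 then gives $\wt M\cong \mathbb{R}\times N_1$ with $N_1$ an Alexandrov space of $Ric\gs0$ and empty boundary. Since $\dim\wt M=n<\infty$, iteration terminates at
$$\wt M\cong \mathbb{R}^k\times N,$$
where $N$ is a complete Alexandrov space with $\partial N=\varnothing$, $Ric(N)\gs 0$, and containing no line.

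\textbf{Step 3: Extract a Bieberbach subgroup.} Maximality of $k$ forces every isometry of $\wt M$ to preserve the splitting (the $\mathbb{R}^k$-factor is intrinsically characterized, e.g.\ as the set of points through which there pass $k$ pairwise orthogonal lines), so
$$\pi_1(M)\hookrightarrow \operatorname{Isom}(\mathbb{R}^k)\times\operatorname{Isom}(N).$$
Using Corollary A.3 together with cocompactness of the $\pi_1(M)$-action, $N$ must be compact; therefore $\operatorname{Isom}(N)$ is a compact Lie group. A standard discreteness argument (if a sequence of distinct $\gamma_i\in\pi_1(M)$ had convergent $\operatorname{Isom}(N)$-components, their $\operatorname{Isom}(\mathbb{R}^k)$-components would escape to infinity, contradicting properness of the action at any point of $\mathbb{R}^k\times N$) shows that the image of $\pi_1(M)$ in $\operatorname{Isom}(N)$ is discrete, hence finite. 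Its kernel $\Gamma\leq \pi_1(M)$ is a finite-index normal subgroup of $\pi_1(M)$; it injects into $\operatorname{Isom}(\mathbb{R}^k)=\mathbb{R}^k\rtimes O(k)$, is torsion-free (since the $\pi_1(M)$-action on $\wt M$ is free), and acts cocompactly on $\mathbb{R}^k$ (since both $M$ and $N$ are compact). Thus $\Gamma$ is a Bieberbach group of finite index in $\pi_1(M)$.

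\textbf{Main obstacle.} The subtlest step is arguing that the residual factor $N$ is compact. On Alexandrov spaces the de Rham--type decomposition arguments of the Riemannian theory are not directly available, so one must combine cocompactness of the deck action on $\wt M$ with Bishop--Gromov (Corollary A.3) to rule out a noncompact $N$. Making precise the intrinsic characterization of the maximal Euclidean factor in the singular setting---so as to conclude that the splitting is $\pi_1(M)$-equivariant---is the other technical point requiring care; once these are in place, Step 3 is classical.
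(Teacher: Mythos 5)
Your outline follows the same route the paper has in mind: the paper does not spell out a proof, but explicitly delegates to the classical Cheeger--Gromoll argument (as in [A] and [P, pp.~275--276]), made legitimate in the Alexandrov setting by (i) the purely local nature of the Ricci condition, which lets it pass to the universal cover, and (ii) the splitting theorem (Theorem 1.7). Your Steps 1--3 reproduce that argument faithfully, including the cocompactness-plus-ray-translation device for producing a line and the intrinsic characterization of the maximal flat factor via lines.

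One inaccuracy worth flagging: the compactness of the residual factor $N$ is not obtained from Bishop--Gromov (Corollary~A.3), contrary to what you assert both in Step~3 and in your ``Main obstacle'' paragraph. It follows from exactly the ray-to-line argument you already deployed in Step~2, now run inside $N$: because every isometry of $\mathbb{R}^k\times N$ preserves the splitting when $N$ is line-free, the image of $\pi_1(M)$ in $\operatorname{Isom}(N)$ acts cocompactly on $N$; were $N$ noncompact it would contain a ray, and translating that ray by these isometries and passing to a subsequential limit would produce a line in $N$, contradicting line-freeness. No volume estimate enters. (The paper invokes Bishop--Gromov in this passage mainly for Corollaries~1.10 and~1.11 -- the polynomial-growth and Betti-number estimates -- not for Corollary~1.9.) With that correction, your argument is sound and identical in substance to what the paper intends.
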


\begin{cor}Let $M$ be an $n$-dimensional  Alexandrov space with nonnegative Ricci curvature and $\partial M=\varnothing$.  Then any finitely generated subgroup of $\pi_1(M)$ has polynomial growth of degree $\ls n$. If some finitely generated subgroup of $\pi_1(M)$ has polynomial growth of degree $=n$, then $M$ is compact and flat.\end{cor}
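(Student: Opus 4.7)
The plan is to run Milnor's classical ball-packing argument on the universal cover for the polynomial-growth bound, and to combine Gromov's polynomial-growth theorem with the splitting Theorem 1.7 for the equality case; this transcribes the arguments of Anderson [A] (see also pages 275--276 of [P]) to the Alexandrov setting.

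Let $\pi\colon \tilde M\to M$ be the universal cover. Since Condition $(RC)$ is purely local and $\pi$ is a local isometry, the pullback metric on $\tilde M$ satisfies $Ric(\tilde M)\gs 0$ and $\partial \tilde M=\varnothing$. Fix a lift $\tilde p \in \pi^{-1}(p)$; the deck group $\pi_1(M)$ acts by isometries, freely and properly discontinuously. For the upper bound, let $G=\langle g_1,\dots,g_k\rangle \subset \pi_1(M)$ be finitely generated, set $L:=\max_i d(\tilde p, g_i\tilde p)$, and pick $r>0$ smaller than half the minimal displacement $\inf_{g\in G\setminus\{e\}} d(\tilde p, g\tilde p)$ so that the orbit balls $\{B(g\tilde p,r)\}_{g\in G}$ are pairwise disjoint. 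Writing $N_G(s)$ for the number of $g\in G$ of word length $\ls s$, the $N_G(s)$ disjoint isometric balls $B(g\tilde p,r)$ sit inside $B(\tilde p,sL+r)$, so Corollary~A.3 (Bishop--Gromov on $\tilde M$ compared with $\mathbb{R}^n$) gives
\[ N_G(s)\cdot vol(B(\tilde p,r))\ls vol(B(\tilde p,sL+r))\ls \omega_n(sL+r)^n, \]
whence $N_G(s)=O(s^n)$, which proves the first claim.

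For the rigidity, suppose some such $G$ has polynomial growth of degree exactly $n$. Gromov's polynomial-growth theorem makes $G$ virtually nilpotent; after passing to a torsion-free finite-index subgroup we may take $G$ itself nilpotent with growth exponent $n$. Any infinite-order $g\in G$ yields, via the standard axis construction (a subsequential Gromov--Hausdorff limit of minimizing segments from $g^{-k}\tilde p$ to $g^k\tilde p$, using properness of $\tilde M$), a $g$-invariant line in $\tilde M$. Theorem 1.7 then splits off a Euclidean factor, and nilpotent commutativity forces the remaining generators to preserve this splitting and act by isometries on the second factor again with $Ric\gs 0$. Iterating the axis construction on commuting independent generators produces $\tilde M=\mathbb{R}^j\times N_0$ where $N_0$ contains no line and each Euclidean factor carries a translation by a cyclic subgroup of $G$. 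Matching the polynomial-growth exponent $n$ of $G$ to the rank $j$ of the Euclidean part via the Bass--Guivarc'h formula forces $G$ to be virtually $\mathbb{Z}^n$, so $j=n$; then $N_0$ is a single point, $\tilde M=\mathbb{R}^n$, and $M$ is a compact flat Alexandrov space.

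The main obstacle is the rigidity step. Two singular-setting subtleties must be verified with care: (i) producing honest lines from infinite-order isometries of $\tilde M$, via an axis argument in a proper geodesic space; and (ii) confirming that commuting isometries preserve the canonical splitting produced by Theorem 1.7, so that the dimension count via Bass--Guivarc'h iterates down to $N_0$ being a single point. Both are classical in the Riemannian setting and, as the authors assert, transfer essentially verbatim once Theorem 1.7 and the Bishop--Gromov comparison (Corollary~A.3) are available.
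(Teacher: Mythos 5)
Your first half (the Milnor ball-packing argument with Bishop--Gromov on the universal cover) is correct and is exactly what the paper intends by citing [A] and [P, pp.~275--276]. The rigidity half, however, takes a substantially different and heavier route than the classical one, and it has a genuine gap precisely at the step you yourself flag. The ``axis construction'' does not produce a line in general: a subsequential limit of minimizing segments $[g^{-k}\tilde p, g^k\tilde p]$ is a line only if those segments remain at bounded distance from some fixed point, which requires either compactness of $M$ or cocompactness of the orbit --- and compactness is exactly what you are trying to prove, so the argument is circular. Without this, the midpoints of the segments may escape to infinity and no line is obtained. Moreover, even when a limiting line does exist it need not be $g$-invariant (take a screw motion of $\mathbb{R}^3$ and a base point off its axis: the limiting line is a vertical line through a different point), so the iterated-splitting bookkeeping with translations ``carried'' on each $\mathbb{R}$-factor also needs repair. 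Finally, after one split $\tilde M=\mathbb{R}\times N_1$, the projection of $G$ to $\mathrm{Isom}(N_1)$ need not be free or properly discontinuous when $N_1$ is noncompact, so the iteration is not obviously sound.

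The route the paper has in mind avoids all of this and does not need Gromov's polynomial growth theorem. From $N_G(s)\geqslant a s^n$ along a sequence $s\to\infty$, the same ball-packing estimate applied to a Dirichlet domain $D$ for $G$ gives $N_G(s)\cdot \mathrm{vol}(D\cap B(\tilde p,R))\leqslant \mathrm{vol}(B(\tilde p,sL+R))\leqslant\omega_n(sL+R)^n$ for every $R$, whence $\mathrm{vol}(D)\leqslant\omega_n L^n/a<\infty$, i.e.\ $\mathrm{vol}(\tilde M/G)<\infty$. Yau's linear volume growth (a consequence of Bishop--Gromov along a ray, which is available here via Corollary~A.3) then forces $\tilde M/G$ to be compact, so $M$ is compact. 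With $M$ compact, Corollary~1.8 (the Bieberbach structure, built on Theorem~1.7) gives $\tilde M=\mathbb{R}^k\times N$ with $N$ compact and $\pi_1(M)$ a finite extension of a crystallographic group of rank $k$; the growth degree then equals $k$, so $k=n$, $N$ is a point, $\tilde M=\mathbb{R}^n$, and $M$ is flat. If you want to keep your line-production step, it becomes valid only \emph{after} compactness of $M$ is established by the finite-volume argument, at which point the standard Cheeger--Gromoll translation-of-midpoints construction produces a line without any appeal to axes of individual isometries.
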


\begin{cor}
   Let $M$ be an $n$-dimensional  Alexandrov space with  $\partial M=\varnothing$. \\
   \indent (1) If  $Ric(M)\gs(n-1)K>0$, then its fundamental group is finite.\\
   \indent (2)   If $Ric(M)\gs(n-1)K$ and diameter of $M\ls D$, then   $$b_1(M)\ls C(n, K^2\cdot D)$$ for some function $C(n, K^2\cdot D)$.

    Moreover, there exists a constants $\kappa(n)>0$ such that if $K^2\cdot D\gs -\kappa(n),$ then $b_1(M)\ls n.$
\end{cor}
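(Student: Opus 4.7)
The plan is to transcribe the Riemannian proofs of Myers' theorem and Gromov's Betti number theorem into the Alexandrov setting. Two inputs from this paper make this possible: Bishop--Gromov volume comparison under $Ric\gs(n-1)K$ (Corollary~A.3) and, crucially, the fact that our Ricci lower bound is \emph{local} (Definition~1.6 is an infinitesimal condition along geodesics in small neighborhoods), so it automatically passes to any covering space endowed with the pullback length metric. This locality is the feature that $CD(n,k)$ lacks, and it is precisely what makes the covering-space arguments of [A] and [P, pp.~275--276] run verbatim here.

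For (1), lift to the universal cover $\widetilde M$; by locality, $Ric(\widetilde M)\gs(n-1)K>0$. The RC condition directly yields a Myers-type diameter bound (via the second variation inequality built into Definition~1.5), so $M$ itself is compact with $\mathrm{diam}(M)\ls\pi/\sqrt K$. Applying Bishop--Gromov on $\widetilde M$ gives $vol(B_{\tilde p}(r))\ls V_K^n(\pi/\sqrt K)<\infty$ for every $r$, where $V_K^n$ denotes ball volume in the constant-curvature-$K$ model. Hence $vol(\widetilde M)<\infty$. Since $\widetilde M\to M$ is a local isometry and $vol(M)>0$, the covering degree equals $vol(\widetilde M)/vol(M)<\infty$, so $\pi_1(M)$ is finite.

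For the quantitative Betti bound in (2), I would run Gromov's short-basis argument. Pass to a finite cover $\bar M\to M$ so that $H_1(\bar M,\mathbb Z)$ is torsion-free, then to the Galois cover $\widehat M\to\bar M$ with deck group $H_1(\bar M,\mathbb Z)\cong\mathbb Z^{b_1}$. A shortest generating set $\gamma_1,\dots,\gamma_{b_1}$ of the deck group has the classical property that images of a basepoint under distinct words of bounded length in the $\gamma_i^{\pm1}$ are uniformly $d$-separated in $\widehat M$. Packing disjoint equal-radius balls around these orbit points inside a fixed larger ball of $\widehat M$, and comparing with Bishop--Gromov on $\widehat M$ (again by locality), yields $b_1\ls C(n,K\cdot D^2)$.

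The main obstacle is the rigidity ``moreover'' statement $b_1\ls n$ for small $|K|\cdot D^2$. The $K=0$ case is clean: iterating Theorem~1.7 on $\widetilde M$, each independent $\mathbb Z$-factor in $H_1/\mathrm{tors}$ yields (via a Busemann-function construction using a deck-transformation-invariant line) an $\mathbb R$-splitting, so $\widetilde M\cong\mathbb R^{b_1}\times N$ with $N$ a compact Alexandrov factor, forcing $b_1\ls\dim\widetilde M=n$. For the almost nonnegative case I would argue by contradiction and rescaling: a sequence $M_i$ with $K_i^2\cdot D_i\to0$ and $b_1(M_i)\gs n+1$, renormalized to unit diameter, is Gromov--Hausdorff subconvergent by the just-established uniform bound $b_1\ls C(n,K_iD_i^2)$, and the limit $M_\infty$ is Alexandrov with $Ric\gs0$. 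The delicate point---and the real technical obstacle---is verifying that $b_1$ does not drop in such noncollapsed limits, so that $b_1(M_\infty)\gs n+1$ contradicts the $K=0$ case; this requires a careful stability analysis of the short-basis construction along the sequence, controlling both the displacement of the shortest generators and the torsion killed by the finite covers.
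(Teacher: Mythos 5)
Your treatment of (1) and of the quantitative estimate in (2) matches the paper's intent: the two ingredients are the locality of Definition~1.6, which lets the Ricci bound pass to any cover with the pullback length metric, and Bishop--Gromov (Corollary~A.3); granted these, the arguments of Anderson and Gromov cited in [A] and [P, pp.~275--276] run unchanged, which is exactly what the paper asserts. One minor imprecision in (1): you say the RC condition ``directly yields a Myers-type diameter bound.'' The paper actually routes the diameter estimate through $Ric\gs(n-1)K\Rightarrow CD(n,(n-1)K)$ together with the Bonnet--Myers theorem for $CD$ spaces (recalled at the start of Section~5). A direct argument from Theorem~3.3 is possible, but it requires the extra observation that $\cot_K(|px|)\to-\infty$ as $|px|\to\pi/\sqrt K$ contradicts the two-sided bounds (3.1)--(3.2) at interior points of a geodesic of length $>\pi/\sqrt K$; you should make that explicit if you want to bypass the $CD$ route.

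The ``moreover'' clause is where your proposal genuinely diverges, and where there is a gap. The argument the paper is citing establishes $b_1\ls n$ for $|K|D^2$ small by the \emph{same} short-basis packing count that gives the quantitative bound, with the constants tracked: the words $\gamma_1^{k_1}\cdots\gamma_{b_1}^{k_{b_1}}$ with $|k_i|\ls m$ move the base point into a ball of radius comparable to $2mDb_1$ while giving $(2m+1)^{b_1}$ pairwise disjoint balls of a fixed radius, and when $|K|D^2$ is small Bishop--Gromov forces $(2m+1)^{b_1}\ls C\cdot m^n$ for every $m$, whence $b_1\ls n$. No rescaling and no Gromov--Hausdorff limit are involved. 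Your alternative---a rescaled contradiction sequence converging to a $Ric\gs0$ Alexandrov space to which one applies Theorem~1.7---rests on two things you have not established and which the paper does not supply: that Condition (RC) is stable under Gromov--Hausdorff limits of the rescaled spaces, and that $b_1$ is lower semicontinuous along the sequence. You flag the second yourself, and it is a real obstruction: $b_1$ is not GH-stable in general, since the short loops generating $H_1$ can shrink away or become inessential in the limit, and controlling them along the sequence is precisely the content of the direct packing estimate. As written, the ``moreover'' half of your proof does not close; replacing the limit argument by the packing count you already used for the quantitative part does.
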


The paper is organized as follows. In Section 2, we recall some
necessary materials for Alexandrov spaces. In Section 3,  we will
define a new representation of Laplacian along a geodesic and will prove the
comparison theorem for the newly-defined representation of Laplacian (see Theorem 3.3).
In Section 4, we will discuss the rigidity part of the comparison
theorem. The maximal diameter theorem and the splitting theorem will
be proved in Section 5 and 6, respectively. In the appendix, we give
a modification of Petrunin's proof in [Pet2] to show that the
condition on Ricci curvature bounded below implies the
curvature-dimension condition (see Proposition A.2).

{\bf Acknowledgements}\ \      We would like to thank Dr. Qintao
Deng for helpful discussions.  We are also grateful to the referee for helpful comments on the second variation formula. The second author is partially
supported by NSFC 10831008 and NKBRPC 2006CB805905.

\section{Preliminaries}
 A metric space  $(X,|\cdot,\cdot|)$ is called a length space if for any two point $p,q\in X$, the distance between $p$ and $q$ is given by
  $$|pq|=\inf_{\gamma,\gamma\ {\rm connect}\ p,q}Length(\gamma).$$
A length space $X$ is called a \emph{geodesic} space if for any two
point $p,q\in X$, there exists a curve $\gamma$ connecting $p$ and
$q$ such that $Length(\gamma)=|pq|$. Such a curve is called a
shortest curve. A geodesic is a  \emph{unit-speed} shortest curve.

Recall that a length space $X$ has curvature $\gs k$ in an open set
$U\subset X$ if for  any quadruple $(p;a, b,c)\subset U $, there
holds  $$\wa_k apb+\wa_k bpc+\wa_k cpa\ls 2\pi,$$ where $\wa_k apb,
\wa_k bpc,$ and $\wa_k cpa$ are the comparison angles in the
$k-$plane. A length space $M$ is called an\emph{ Alexandrov space
with curvature bounded from below locally} (for short, we say $M$ to
be an \emph{Alexandrov space}), if it is locally compact and any
point in $M$ has an open neighborhood $U\subset M$ such that $M$ has
curvature  $\gs k_U$ in $U$, for some $k_U\in \mathbb{R}$.

Let $M$ be an Alexandrov space without boundary and $U\subset M$ be
an open set. A locally Lipschitz function $u$ on $U$ is said to be
\emph{$\lambda-$concave}  on $U$ if for any geodesic $\gamma\subset
U$, the one-variable function $$u\circ\gamma(t)-\lambda t^2/2$$ is
concave. A function $u$ on $M$ is said to be \emph{semi-concave} if
for any point $x\in M$  there is a neighborhood $U_x\ni x $ and a
real number $\lambda_x$ such that the restriction $u|_{U_x}$ is
$\lambda_x$-concave.

Let $\psi: \mathbb{R}\rightarrow \mathbb{R}$  be a continuous function. A function $u$ on $M$ called $\psi(u)-concave$ if for any point $x \in M$ and any $\varepsilon > 0$ there is a neighborhood $U_x\ni x $ such that $u|_{U_x}$ is $(\psi\circ u(x) +\varepsilon)$-concave.

 If $M$ has curvature $\gs k$ in $U$, then it is well-known that the function $u=\varrho_k\circ dist_p$ is $(1-k u)-$concave in $U\backslash\{p\}$,
 where  $$\varrho_k(\upsilon)=\begin{cases}\frac{1}{k}\big(1-\cos(\sqrt k\upsilon)\big)\ & {\rm if}\quad k>0,\\
\frac{\upsilon^2}{2}\ &{\rm if}\quad k=0,\\
\frac{1}{k}\big(\cosh(\sqrt{-k}\upsilon)-1\big) \ & {\rm if}\quad
k<0,
\end{cases} $$
(see, for example, Section 1 in [Pet3]).

Let $u$ be a semi-concave function on $M$. For any point $p\in M$,
there exists a \emph{$u-$gradient curve} starting at $p$. Hence $u$
generates a \emph{gradient flow} $\Phi_u^t: M\rightarrow M$, which
is a locally Lipschitz map. (Actually, it is just a semi-flow, because backward flow $\Phi_u^{-t}$ is not always well-defined.) Particularly, if $u$ is concave, the
gradient flow is a 1-Lipschitz map. We refer to Section 1 and 2 in
[Pet3] for the details on semi-concave functions, gradient curves
and gradient flows.

\section{Laplacian comparison theorem}
Let $M$ be an $n$-dimensional Alexandrov space without boundary.  A canonical Dirichlet form $\mathcal{E}$ is defined by $$\mathcal{E}(u,v):=\int_M\ip{\nabla u}{\nabla v}d{\rm vol},\qquad {\rm for}\ \ u,v\in W^{1,2}_0(M).$$(see [KMS]). The Laplacian associated to the canonical Dirichlet  form is given as follows.
Let $u: U\subset M\to \R$ be a $\lambda-$concave function. The (canonical) Lapliacian of $u$ as a sign-Radon measure is defined by
 \begin{equation*}\int_M \phi d\Delta u=-\mathcal{E}(u,\phi)=-\int_M\ip{\nabla \phi}{\nabla u}d{\rm vol}\end{equation*} for all Lipschitz function $\phi$ with compact support in $U.$ In [Pet2], Petrunin  proved \begin{equation*}\Delta u\ls n\lambda\cdot{\rm vol},\end{equation*} in particular, the singular part of $\Delta u$ is non-positive. If $M$ has curvature $\gs K$, then any distance function $dist_p(x):=d(p,x)$ is $\cot_K\circ dist_p-$concave on $M\backslash\{p\}$,  where the function $\cot_K(s)$ is defined  by $$\cot_K(s)=\begin{cases}\frac{\sqrt K\cdot\cos(\sqrt K s)}{\sin(\sqrt Ks)}\ & {\rm if}\quad K>0,\\
\frac{1}{s}\ &{\rm if}\quad K=0,\\
\frac{\sqrt{-K}\cdot\cosh(\sqrt{-K}s)}{\sinh(\sqrt{-K}s)}\ & {\rm if}\quad K<0.
\end{cases} $$It is a solution of the ordinary differential equation $\chi'(s)=-K-\chi^2(s)$. Therefore the above inequality $\Delta u\ls n\lambda\cdot{\rm vol}$ gives a Laplacian comparison theorem for the distance function on Alexandrov spaces.

   In [KS1], by using the $DC-$structure (see [Per2]),  Kuwae--Shioya
defined a distributional Laplacian for a distance function $dist_p$ by $$\Delta dist_p=D_i\big(\sqrt{det(g_{ij})}g^{ij}\partial_j dist_p\big)$$ on a local chart of $M\backslash \mathcal S_\epsilon$ for sufficiently small  positive number $\epsilon$, where $$\mathcal S_\epsilon:=\{x\in M:\ {\rm vol}(\Sigma_x)\ls {\rm vol}(\mathbb{S}^{n-1})-\epsilon\}$$and $D_i$ is the distributional derivative. Note that the union of all $\mathcal S_\epsilon$ has zero measure. One can view the distributional Laplacian $\Delta dist_p$ as a sign-Radon measure. In [KMS], Kuwae, Machigashira and Shioya proved that  the distributional Laplacian is  actually a representation of  the previous (canonical) Laplacian on $M\backslash \mathcal S_\epsilon$.  Moreover in [KS1], Kuwae and Shioya extended the Laplacian comparison theorem  under
the weaker condition $BG(k)$.

 Both of the above canonical Laplacian  and its DC representation (i.e. the distributional Laplacian) make sense up to a set
which has zero measure.

In Riemannian geometry, according to Calabi, the Laplacian comparison theorem holds in barrier sense, not just in distribution sense. In this section, we will try to give a new representation of  the above canonical Laplacian of a distance
function, which makes sense in $W_p$, the set of points $z\in M$ such that the geodesic $pz$ can be extend beyond $z$. We will also prove a  comparison theorem for the new representation under our Ricci curvature condition.

 Let $M$ denote an $n$-dimensional complete Alexandrov space without boundary.  Fix a geodesic $\gamma:[0,\ell)\rightarrow M$ with  $\gamma(0)=p$ and denote $f=dist_{p}$. Let $x\in \gamma\backslash\{p\}$ and $L_x,\ \Lambda_x$ be as above in Section 1. Clearly, we  may assume  that $M$ has curvature $\gs k_0$ (for some $k_0<0$)  in a neighborhood  $U_\gamma$ of $\gamma$.

Perelman in [Per2] defined a Hessian for a semi-concave function $u$ on almost all point $x\in M$, denoted by $Hess_xu$. It is a bi-linear form on $T_x\ (=\R^n)$. But for the given geodesic $\gamma$, we can not insure that the Hessian is well defined along $\gamma$.

We now define a  version of Hessian and Laplacian for the distance function $f$ along the geodesic $\gamma$ as follows. Note that the tangent space at an interior point $x\in \gamma$ can be split to $L_x\times\R$ and $f\circ\gamma$ is linear. So we only need to define the Hessian  on the set of orthogonal directions $\Lambda_x$.

Throughout this paper,  $\mathcal S$ will always denote the set of
all sequences $\{\theta_j\}^\infty_{j=1}$ with $\theta_j \to0$ as
$j\to\infty$ and $\theta_{j+1}\ls \theta_j$.

\begin{defn}Let  $x\in \gamma\backslash\{p\}$.  Given a sequence $\theta:=\{\theta_j\}_{j=1}^\infty\in\mathcal S$, we define a function $H^\theta_xf:\Lambda_x\rightarrow\mathbb{R}$ by
$$H^\theta_xf(\xi)\overset{def}{=}\limsup_{s\to0, \ s\in \theta}\frac{f\circ\exp_x(s\cdot\xi)-f(x)}{s^2/2};$$
and $$\Delta^\theta f(x)\overset{def}{=}(n-1)\cdot\oint_{\Lambda_x} H^\theta_xf(\xi).$$
\end{defn}

Since $U_\gamma$ has curvature $\gs k_0$, we know that  $f$ is $\cot_{k_0}(|px|)-$concave  and $dist_{\gamma(\ell)}$ is $\cot_{k_0}(|x\gamma(\ell)|)-$concave near $x$, which imply  \begin{equation}H^\theta_xf\ls \cot_{k_0}(|px|)\end{equation} for any sequence $\theta\in\mathcal S$, and $$ |\gamma(\ell)\ \exp_x(s\cdot\xi)|\ls |x\gamma(\ell)|+\cot_{k_0}(|x\gamma(\ell)|)\cdot s^2/2+o(s^2)$$ for any $\xi\in\Lambda_x$. Then by triangle inequality, we have  \begin{equation}H^\theta_xf\gs -\cot_{k_0}(|x\gamma(\ell)|).\end{equation} Thus $H^\theta_xf$ is well defined and bounded.
It is easy to see that $H_x^\theta f$ is measurable on $\Lambda_x$ and thus it is  integrable.

 If there exists Perelman's Hessian of $f$ at a point  $x$ (see [Per2]), then $H_x^\theta f(\xi)=Hess_xf(\xi,\xi)$ for all $\xi\in \Lambda_x$ and $\theta\in\mathcal S$.

Denote by $Reg_f$  the set of points $z\in M$ such that there exists Perelman's Hessian of $f$ at  $z$.
If we write  the Lebesgue decomposition of the canonical Laplacian $\Delta f=(\Delta f)^{sing}+(\Delta f)^{ac}\cdot{\rm vol}$,  with respect to the $n$-dimension Hausdorff measure ${\rm vol}$, then $(\Delta f)^{ac}(x)={\rm Tr}Hess_xf=\Delta^\theta f(x)$ for all $x\in W_p\cap Reg_f$ and $\theta\in\mathcal S$. It was shown  in [OS, Per2] that $Reg_f\cap W_p$ has full measure in $M$. Thus  $\Delta^\theta f(x)$ is actually a representation of the absolutely continuous part of the canonical Laplacian $\Delta f$ on $W_p$.

Note from the definition that if $\theta_1\subset\theta_2$, then
$$H_x^{\theta_1}f\ls H_x^{\theta_2}f \mbox{\ \ and \ \ } \Delta^{\theta_1}f(x)\ls
\Delta^{\theta_2}f(x).$$
 The following lemma is a discrete  version of the propagation equation of the Hessian of $f$ along the geodesic $\gamma$.
\begin{lem} Let $f=dist_p$. Given $\epsilon>0$, a continuous functions family  $\{g_{\gamma(t)}\}_{0<t<\ell}$ and a sequence $\{\theta_j\}_{j=1}^\infty\in \mathcal S$.  Let $y,z\in \gamma$ with $|py|<|pz|.$ We assume that a isometry $T:\Lambda_z\to\Lambda_y$ and  the subsequence $\delta:=\{\delta_j\}\subset\{\theta_j\} $ such that (1.1) holds. Then   \begin{equation}H^\delta_zf(\xi)\leqslant l^2\cdot H^\delta_yf(\eta)+ \frac{(l-1)^2}{|yz|}-\frac{l^2+l+1}{3}\cdot|yz|\cdot\big(g_z(\xi)-\epsilon\big) \end{equation}for any $l\geqslant0$ and any $\xi\in\Lambda_z,\ \eta=T\xi\in\Lambda_y$.
\end{lem}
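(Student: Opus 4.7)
The plan is to derive (3.3) by combining the hypothesized second-variation-type bound (1.1) with the triangle inequality for $f=\mathrm{dist}_p$, exploiting the structural cancellation $f(z)-f(y)=|yz|$ that arises because $p$, $y$, and $z$ lie on the common geodesic $\gamma$. This cancellation is what allows the leading $|yz|$-term on the right-hand side of (1.1) to disappear against the left-hand side and leave only the second-order correction, whose two contributions are precisely the $(l-1)^2/|yz|$ term and the $(g_z(\xi)-\epsilon)$-term appearing in (3.3).

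Concretely, I would apply (1.1) with $s=|py|$, $t=|pz|$, parameters $l_1=l$, $l_2=1$, and the given isometry $T$ and subsequence $\delta$, to get
\begin{equation*}
|\exp_y(\delta_j l\eta),\exp_z(\delta_j\xi)| \leq |yz| + \frac{(l-1)^2}{2|yz|}\delta_j^2 - \frac{(g_z(\xi)-\epsilon)|yz|}{6}(l^2+l+1)\delta_j^2 + o(\delta_j^2).
\end{equation*}
The triangle inequality $f(\exp_z(\delta_j\xi))\leq f(\exp_y(\delta_j l\eta))+|\exp_y(\delta_j l\eta),\exp_z(\delta_j\xi)|$ combined with $f(z)=f(y)+|yz|$ then yields
\begin{equation*}
f(\exp_z(\delta_j\xi))-f(z) \leq \bigl(f(\exp_y(\delta_j l\eta))-f(y)\bigr) + \frac{(l-1)^2}{2|yz|}\delta_j^2 - \frac{(g_z(\xi)-\epsilon)|yz|}{6}(l^2+l+1)\delta_j^2 + o(\delta_j^2).
\end{equation*}
Dividing through by $\delta_j^2/2$ and taking $\limsup_{j\to\infty}$ along $\delta$, the left-hand side is by definition $H^\delta_z f(\xi)$, and (3.3) reduces to the estimate $\limsup_j \frac{f(\exp_y(\delta_j l\eta))-f(y)}{\delta_j^2/2} \leq l^2\, H^\delta_y f(\eta)$.

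The main obstacle is precisely this last estimate. Rewriting the quotient as $l^2\cdot\frac{f(\exp_y((\delta_j l)\eta))-f(y)}{(\delta_j l)^2/2}$ shows that its $\limsup$ is naturally a $\limsup$ taken along the rescaled parameter values $\{l\delta_j\}$, rather than along $\delta$ itself; absent further input it matches $l^2$ times the $\limsup$ defining $H$ with respect to the sequence $l\delta$, not $H^\delta_y f(\eta)$. To bridge this, I would use that $f$ is $\cot_{k_0}(|py|)$-concave on a small neighborhood of $y$: the one-variable function $s\mapsto f(\exp_y(s\eta))-f(y)-\cot_{k_0}(|py|)s^2/2$ is then concave, with value $0$ and vanishing right-derivative at $s=0$ (the latter because $\eta\in\Lambda_y$ is perpendicular to $\nabla f(y)$). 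This provides a controlled one-sided quadratic upper envelope valid for all small $s>0$, and a further diagonal subsequence extraction of $\delta$ along which the $\limsup$'s at the two scalings $s=\delta_j$ and $s=\delta_j l$ are both realized simultaneously yields the needed comparison. As a sanity check, the boundary cases $l=0$ and $l=1$ do not involve any rescaling and reduce (3.3) to the outputs of (1.1) with $(l_1,l_2)=(0,1)$ and $(1,1)$, respectively, which can be verified by direct substitution.
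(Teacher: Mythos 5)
Your skeleton reproduces the paper's proof exactly up to the point you flag: applying (1.1) with $(l_1,l_2)=(l,1)$, the triangle inequality, and the cancellation $f(z)-f(y)=|yz|$ are precisely steps (3.4)--(3.7) there, and the whole lemma does indeed reduce, as you say, to the single estimate
\begin{equation*}
f\big(\exp_y(l\delta'_j\eta)\big)\ \leq\ f(y)+\tfrac{(l\delta'_j)^2}{2}H^\delta_y f(\eta)+o(\delta_j'^2).
\end{equation*}
The paper dispatches this with the phrase ``by definition'' (its (3.5)), but you are right that it is not a direct consequence of the definition when $l\neq 0,1$: $H^\delta_y f(\eta)$ is a $\limsup$ taken along $s\in\delta$, whereas the evaluation above is at $s=l\delta'_j$, which in general does not lie in $\delta$. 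Your reading is therefore sharper than the paper's on this point, and your check that $l=0,1$ are genuinely unproblematic is correct.

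The bridge you propose, however, does not close the gap. Put $\phi(s)=f(\exp_y(s\eta))-f(y)$ and $\lambda=\cot_{k_0}(|py|)$. Semi-concavity gives that $\psi(s):=\phi(s)-\lambda s^2/2$ is concave with $\psi(0)=\psi'(0^+)=0$, so $\phi(s)\leq \lambda s^2/2$; but this envelope is pegged to $\lambda$, which is in general strictly larger than $H^\delta_y f(\eta)=\lambda+a$, where $a:=\limsup_{s\in\delta}\psi(s)/(s^2/2)\leq 0$. What is actually needed is $\psi(l\delta'_j)\leq \tfrac{a}{2}(l\delta'_j)^2+o(\delta_j'^2)$, and concavity alone does not supply this: concavity of $\psi$ controls the first-order secant ratios $\psi(s)/s$, not the second-order ratios $\psi(s)/s^2$, and extracting a further subsequence of $\delta$ on which $\psi(\delta_j)/(\delta_j^2/2)$ and $\psi(l\delta_j)/((l\delta_j)^2/2)$ both converge yields no inequality between the two limits. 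Concretely, take a sparse $\delta\in\mathcal S$ (say $\delta_j=4^{-j}$) and a concave $\psi$ with $\psi(0)=\psi'(0^+)=0$ whose derivative $-\psi'$ is non-decreasing and constant on each interval $[\delta_j,2\delta_j]$, interpolated in between; then $\psi(\delta_j)/(\delta_j^2/2)\to -2$ while $\psi(2\delta_j)/((2\delta_j)^2/2)\to -3/2>-2$, so the required inequality for $l=2$ fails along every further subsequence. Thus the diagonal-extraction step in your proposal has no content, and the argument as you have set it out terminates at exactly the same place the paper's ``by definition'' does; a genuinely different (geometric, not purely one-variable-concavity) input is needed to justify the rescaled evaluation.
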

\begin{proof}
  For any $\xi\in \Lambda_z,$ we can choose a subsequence $\{\delta'_j\}\subset \{\delta_j\}$ such that $$H^\delta_zf(\xi)=\lim_{j\to\infty}\frac{f(\exp_z(\delta'_j\xi))-f(z)}{\delta_j'^{2}/2}.$$
Then, we have \begin{equation}f(\exp_z(\delta'_j\xi))=f(z)+\frac{\delta_j'^2}{2}H^\delta_zf(\xi) +o(\delta_j'^2)\end{equation} for any $l\geqslant 0$. By definition, we have
\begin{equation}f(\exp_y(\delta'_j\cdot l\eta))\leqslant f(y)+\frac{(l\delta_j')^2}{2}H^\delta_yf(\eta) +o(\delta_j'^2).\end{equation}
Note that \begin{equation} f(z)-f(y)=|yz|\quad\end{equation} and\begin{equation} f(\exp_z(\delta'_j\xi))-f(\exp_y(\delta'_j\cdot l\eta)) \leqslant|\exp_z(\delta'_j\xi),\exp_y(\delta'_j\cdot l\eta)|.\end{equation}
By combining (3.4)--(3.7) and using (1.1) with $l_1=l, l_2=1$, we have\begin{align*}\frac{\delta_j'^2}{2}\big(H^\delta_z&f(\xi)-l^2\cdot H^\delta_yf(\eta)\big)+o(\delta_j'^2)\\
&\leqslant
\delta_j'^2\cdot\Big(\frac{(l-1)^2}{2|yz|}-\frac{g_r(\xi)-\epsilon}{6}\cdot|yz|\cdot(l^2+l+1)\Big)+o(\delta_j'^2)
,\end{align*}for any $l\geqslant0$. Hence
$$H^\delta_zf(\xi)-l^2\cdot H^\delta_yf(\eta)\leqslant
\frac{(l-1)^2}{|yz|}-\frac{l^2+l+1}{3}\cdot|yz|\cdot
(g_z-\epsilon).$$ This completes the proof of the lemma.\end{proof}
The following result is the comparison for the above defined representation of
Laplacian.
\begin{thm}Let  $f=dist_p$ and $x\in \gamma\backslash\{p\}$. If  $M$ has Ricci $\geqslant (n-1)K$ along the geodesic $\gamma(t)$, then,
given  any sequence $\{\theta_j\}_{j=1}^\infty\in\mathcal S$, there exists a subsequence $\delta=\{\delta_j\}$ of $\{\theta_j\} $
 such that $$\Delta^{\delta} f(x)\ls(n-1)\cdot\cot_K(|px|).$$
(If $K>0$, we add assumption $|px|<\pi/\sqrt K$).\end{thm}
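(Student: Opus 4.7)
The plan is to mimic the classical Riccati proof of the Laplacian comparison theorem in a discrete form along $\gamma$. Writing $\phi(t):=\oint_{\Lambda_{\gamma(t)}}H^\delta_{\gamma(t)}f=\Delta^\delta f(\gamma(t))/(n-1)$, the goal is to establish, for a carefully extracted subsequence $\delta\subset\{\theta_j\}$, a discrete Riccati inequality
$$\phi(t+h)\ls \phi(t)-h\phi(t)^2-h(K-2\epsilon)+O(h^2),$$
iterate it along a fine partition of $[\tau,|px|]$, and compare the result with the ODE $\tilde\phi'=-\tilde\phi^2-(K-2\epsilon)$ whose canonical solution is $\cot_{K-2\epsilon}$. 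The Ricci hypothesis provides, for any preassigned $\epsilon>0$, a family $\{g_{\gamma(t)}\}\in\mathcal F$ with $\oint_{\Lambda_{\gamma(t)}}g_{\gamma(t)}\gs K-\epsilon$ for all $t\in(0,\ell)$.

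The key computation is the pointwise version. Apply Lemma 3.2 to a pair $(y,z)=(\gamma(t_i),\gamma(t_{i+1}))$ with the $\xi$-dependent choice $l_\xi=1-h\cdot H^\delta_yf(T\xi)$; the a priori bounds (3.1)--(3.2) give $|H^\delta_yf|\ls C$ uniformly on a compact portion of $\gamma$, so $l_\xi\gs0$ once $h$ is small. Expanding (3.3) in this choice yields
$$H^\delta_zf(\xi)\ls H^\delta_yf(T\xi)-h\bigl(H^\delta_yf(T\xi)\bigr)^2-h\bigl(g_z(\xi)-\epsilon\bigr)+O(h^2),$$
with remainder uniform in $\xi$ (this is the analogue of choosing the optimal trial Jacobi field in the smooth case). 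Integrating over $\Lambda_z$, using that $T$ is an isometry to rewrite the first two terms on the right as integrals over $\Lambda_y$, applying Cauchy--Schwarz $\oint_{\Lambda_y}(H^\delta_yf)^2\gs(\oint_{\Lambda_y}H^\delta_yf)^2$, and invoking $\oint g_z\gs K-\epsilon$, produces the scalar Riccati inequality above.

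To iterate simultaneously on every subinterval I would extract subsequences successively: starting from $\{\theta_j\}$, Condition (RC) applied to $(t_0,t_1)$ (inside one of the neighborhoods it supplies, hence for $h$ small enough) gives $\delta^{(1)}\subset\{\theta_j\}$ on which Lemma 3.2 is usable across $(t_0,t_1)$; repeating inside $\delta^{(1)}$ for $(t_1,t_2)$ produces $\delta^{(2)}\subset\delta^{(1)}$, and after $N$ extractions the terminal $\delta^{(N)}$ validates the discrete Riccati step at every node. Chaining these and comparing the discrete chain with the continuous ODE gives $\phi_{\delta^{(N)}}(|px|)\ls c_{\epsilon,\tau}(|px|)+\omega(h)$, where $c_{\epsilon,\tau}$ solves $\tilde\phi'=-\tilde\phi^2-(K-2\epsilon)$ on $[\tau,|px|]$ with initial value $\phi_{\delta^{(N)}}(\tau)\ls\cot_{k_0}(\tau)\sim1/\tau$. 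Standard Riccati comparison then yields $c_{\epsilon,\tau}(|px|)\to\cot_{K-2\epsilon}(|px|)$ as $\tau\to0$ (the assumption $|px|<\pi/\sqrt K$ is used here when $K>0$ to keep the ODE solution finite on $[\tau,|px|]$). Finally, a diagonal subsequence over $\epsilon_m,h_m,\tau_m\to0$ produces the desired $\delta$: using that $H^{\delta'}f\ls H^\delta f$ whenever $\delta'\subset\delta$, and that the tail of a diagonal drawn from a nested family $\delta^{(m)}$ is a subsequence of each $\delta^{(m)}$, the bound $\Delta^\delta f(x)\ls (n-1)\cot_K(|px|)$ follows.

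The main obstacle, I anticipate, lies in the interplay between the three vanishing parameters $h$, $\tau$, $\epsilon$. Near $\gamma(\tau)$ the value $\phi$ is of size $1/\tau$, so the per-step remainder, proportional to $h^2\phi^3\sim h^2/\tau^3$, could accumulate over the $\sim1/\tau$ steps in the initial regime unless $h\ll\tau$ is arranged and a quantitative discrete Gr\"onwall-type comparison is invoked (exploiting the self-stabilizing nature of the Riccati flow, which rapidly damps large initial data toward $\cot_K$). Tracking these errors, together with organizing the three-parameter diagonal extraction, is the delicate part; the Riccati-comparison mechanism itself is classical.
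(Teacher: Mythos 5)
Your core computation is correct: substituting $l_\xi=1-h\,H^\delta_y f(T\xi)$ into Lemma~3.2, integrating over $\Lambda_z$, changing variables by the isometry $T$, and applying $\oint(H^\delta_yf)^2\gs(\oint H^\delta_yf)^2$ does yield the discrete Riccati inequality $\phi(t+h)\ls\phi(t)-h\phi(t)^2-h(K-2\epsilon)+O(h^2\phi^3)$, and the nested subsequence extraction across a finite chain of Condition-$(RC)$ neighborhoods is exactly the mechanism the paper uses. But your plan diverges from the paper's proof precisely at the point you flag as ``delicate,'' and I would characterize that point as a genuine gap rather than a routine estimate.

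The paper sidesteps the blow-up at $\tau\to0$ altogether. It fixes $K'<K$ with $10\epsilon<K-K'$, starts the induction at a point $y$ with $|py|>\epsilon$ chosen so that $\cot_{k_0}(|py|)\ls\cot_{K'}(|py|-\epsilon)$, and compares the discrete chain against the \emph{fixed} smooth function $t\mapsto\cot_{K'}(t-\epsilon)$ on the compact interval $[|py|,|px|]$, where all per-step error constants are uniformly bounded. The shift $-\epsilon$ makes the initial inequality automatic (since $\cot_{K'}(s-\epsilon)\to\infty$ as $s\to\epsilon^+$ while $\cot_{k_0}$ stays bounded), and the slack $K'-\bar K<-8\epsilon<0$ lets the constant choice $\tilde l=-(B+\mu\bar K/2)/(\cot+B)$ force $A_\mu(\tilde l)\ls0$ once $\omega$ is small; only afterward are $\epsilon\to0$ and $K'\to K$ taken. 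Your plan instead anchors the Riccati ODE comparison at $\tau$ with initial data $\phi(\tau)\sim1/\tau$, so the per-step error $O(h^2\phi^3)\sim h^2/\tau^3$ accumulates over the $\sim\tau/h$ initial steps to $O(h/\tau^2)$ (you would need $h\ll\tau^2$, not merely $h\ll\tau$), and even granting that, you still need a discrete Gr\"{o}nwall-type comparison lemma for a Riccati recursion whose comparison ODE solution has a pole at the left endpoint. That lemma is true but nontrivial; it is neither stated nor proved in your plan, and its quantitative form must be coordinated with the three-parameter diagonal over $(\epsilon,h,\tau)$. The cleanest repair is to import the paper's device: replace the ODE anchored at $\tau$ by the fixed function $\cot_{K'}(\cdot-\epsilon)$ on $[|py|,|px|]$ with $|py|>\epsilon$, which decouples $h$ from $\tau$ and removes the singularity at the source.
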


\begin{proof}  Arbitrarily fix two constants $\epsilon>0$ and $K'<K$ with $10\epsilon<K-K'.$

We can choose a point $y\in px$ such that $|py|>\epsilon$ and \begin{equation}\cot_{k_0}(|py|)\ls \cot_{K'}(|py|-\epsilon).\end{equation}
By our definition of   Ricci curvature $\geqslant (n-1)K$ along $\gamma$, there exists a continuous function family $\{g_{\gamma(t)}\}_{0<t<\ell}\in\mathcal F$  such that $$\oint_{\Lambda_{\gamma(t)}}g_{\gamma(t)}\gs K-\epsilon,\qquad \forall t\in(0,\ell).$$

We take a sufficiently small number $\omega>0.$

For any $t_0\in [|py|,|px|],$ there is a neighborhood $I_{t_0}$ coming from Condition $(RC)$ such that $|I_{t_0}|<\omega.$ All of these neighborhoods form an open covering of $[|py|,|px|]$. Let $I_1,\ I_2,\ \cdots, I_N$ be a finite sub-covering of $[|py|,|px|]$. We take $x_a\in I_a\cap I_{a+1}$ for all $1\ls a\ls N-1 $ and set $y=x_0,\ x=x_N.$ We can assume that $|px_a|<|px_{a+1}|$ for all $0\ls a\ls N-1$.

By Condition $(RC)$, we can find a subsequence $\{\delta_{1,j}\}\subset\{\theta_j\}$ and  an isometry $T_1: \Lambda_{x_1}\rightarrow  \Lambda_{x_0}$ such that (1.1) holds. Next, we can  find a further subsequence $\{\delta_{2,j}\}\subset\{\delta_{1,j}\}$ and an isometry $T_2: \Lambda_{x_2}\rightarrow  \Lambda_{x_1}$ such that (1.1) holds. After a finite step of these procedures, we get a subsequence
$\delta=\{\delta_{j}\}\subset \{\delta_{N-1,j}\}\subset\cdots\subset\{\theta_j\}$ and a family isometries  $\{T_{a+1}\}_{a=0}^{N-1}$,  $T_{a+1}:\Lambda_{x_{a+1}}\rightarrow\Lambda_{x_a}$ such that, for each $a=0,1,\cdots,N-1,$ \begin{equation*}\begin{split} |\exp&_{x_{a}}(\delta_j l_{1}T_{a+1}\xi),\ \exp_{x_{a+1}}(\delta_j l_{2}\xi)|\\ \ls  &|x_ax_{a+1}|+\frac{(l_1-l_2)^2}{2\cdot|x_ax_{a+1}|}\cdot\delta_j^2
\\ & -\frac{\big(g_t(\xi)-\epsilon\big)\cdot|x_ax_{a+1}|}{6}\cdot\big(l_1^2+l_1\cdot l_2+l^2_2\big)\cdot\delta^2_j+o(\delta_j^2)\end{split}\end{equation*} for any $l_1,l_2\gs0$ and any $\xi\in \Lambda_{x_{a+1}}$.

\noindent\textbf{Claim:} For all $0\ls a\ls N-1,$ we have $$\oint_{\Lambda_{x_a}} H^\delta_{x_a}f\ls \cot_{K'}(|px_a|-\epsilon),$$  as $\omega$ is sufficiently small.

We will prove the claim by induction argument with respect to $a$.

Firstly, we know  from (3.8) that the case $a=0$ is held.

 Set $q=x_{a},\ r=x_{a+1}$, $\mu=|x_ax_{a+1}|$ and $\ T=T_{a+1}$. Now  we suppose that the claim is held for the case $a$, i.e., $$\oint_{\Lambda_q} H^\delta_qf\ls \cot_{K'}(|pq|-\epsilon).$$
 We need to show the claim is also held for the case $a+1.$

\indent Consider the functions on $\Lambda_r$
\begin{equation}F_l(\xi)=l^2\cdot
H^\delta_qf(T(\xi))+\frac{(l-1)^2}{\mu}-\frac{l^2+l+1}{3}\cdot\mu\cdot\big(g_r(\xi)-\epsilon\big).\end{equation}
>From Lemma 3.2 above, we have \begin{equation}H^\delta_rf\ls
F_l\end{equation} for any $l\gs0.$

On the other hand, from (3.9),
\begin{equation}\begin{split}\oint_{\Lambda_r} F_l&= l^2\cdot\oint_{\Lambda_r} H^\delta_qf\circ T+\frac{(l-1)^2}{\mu}-\frac{l^2+l+1}{3}\cdot\mu\cdot\big(\oint_{\Lambda_r} g_r(\xi)-\epsilon\big)\\
&\ls l^2\cdot\big(\cot_K(|pq|-\epsilon)\big)+\frac{(l-1)^2}{\mu}-\frac{l^2+l+1}{3}\cdot\mu\cdot\bar K
\end{split}\end{equation}
for any $l\gs0,$ where $ \bar K=K-2\epsilon$.

By setting $$C_1=\max_{|py|\ls t\ls |px|}|\cot_{K'}''(t-\epsilon)|,$$ we have
 \begin{equation}\cot_{K'}(|pq|-\epsilon)\ls\cot_{K'}(|pr|-\epsilon)+\mu\big(K'+\cot^2_{K'}(|pq|-\epsilon)\big)+C_1\mu^2.\end{equation} Thus by combining (3.11) and (3.12), we get
\begin{equation}\oint_{\Lambda_r} F_l\ls \cot_{K'}(|pr|-\epsilon)+A_\mu(l),\end{equation} where
\begin{align*}A_\mu(l)=&\mu\big(K'+\cot_{K'}^2(|pq|-\epsilon)\big)+C_1\mu^2+(l^2-1)\cot_{K'}(|pq|-\epsilon)
\\ &+\frac{(l-1)^2}{\mu}-\frac{l^2+l+1}{3}\cdot\mu\cdot \bar K.
\end{align*}
Denote by $B=1/\mu-\mu \bar K/3$ and $\cot=\cot_{K'}(|pq|-\epsilon).$ Note that $$ \cot_{K'}(|px|-\epsilon)\ls\cot\ls \cot_{K'}(|py|-\epsilon).$$
Since $\omega$ is small and $\mu\ls \omega$, we can assume  that $\cot+B>0$.  Choose $\wt l=-(B+\mu \bar K/2)/(\cot+B).$ Then we get \begin{equation*}\begin{split} A_\mu(\wt l)&=\frac{-\big(B+\mu \bar K/2\big)^2+\Big(\mu\big(K'+\cot^2\big)+C_1\mu^2-\cot+B\Big)\cdot\big(\cot+B\big)}{\cot+B}\\
&\ls\frac{K'-\bar K+C_2\mu+C_3\mu^2}{\cot+B},\end{split}\end{equation*} where  $C_2$, $C_3$ are positive constants independent of $\mu,\omega$ (may depending on $\epsilon, K', x$ and $y$).  Using $\mu\ls\omega$, we get $$A_\mu(\wt l)\ls\frac{K'-\bar K+C_2\omega+C_3\omega^2}{\cot+B}\ls0$$
as $\omega$ is sufficiently small. Hence, by combining (3.10), (3.13) and $A_\mu(\wt l)\ls0$, we get $$\oint_{\Lambda_r} H^\delta_rf\ls \oint_{\Lambda_r} F(\wt l)\ls \cot_{K'}(|pr|-\epsilon).$$
This completes the proof of the claim. In particular, we have $$\oint_{\Lambda_x} H^\delta_xf\ls \cot_{K'}(|px|-\epsilon).$$

Thus by the arbitrariness of $\epsilon $ and $K'$ and   a standard diagonal argument, we obtain a subsequence of $\delta$, denoted again by $\delta$, such that
$$\Delta^{\delta}f(x)\leqslant (n-1)\cdot\cot_{K}(|px|).$$ Therefore,  we have  completed the proof of the theorem.
\end{proof}
\section{Rigidity estimates}
We  continue to consider  an $n$-dimensional complete Alexandrov space $M$ without boundary.  Fix a geodesic $\gamma:[0,\ell)\rightarrow M$ with  $\gamma(0)=p$ and denote $f=dist_{p}$.

Let $x\in \gamma\backslash\{p\}$ and $L_x,\ \Lambda_x$  be as above. We  still  assume  that a neighborhood  $U_\gamma$ of $\gamma$ has curvature $\gs k_0$ (for some constant $k_0<0$).

\begin{lem}  Assume  $M$ has Ricci $\geqslant (n-1)K$ along the geodesic $\gamma(t)$. Let $f=dist_p$ and $x$ be an interior point on the geodesic $\gamma(t)$. Given a sequence $\theta=\{\theta_j\}_{j=1}^\infty\in \mathcal S$, if
\begin{equation}\Delta^{\theta'} f(x)= (n-1)\cdot\cot_K(|px|)\end{equation}for any subsequence $\theta'=\{\theta'_j\}$ of $\theta$, then
there exists a subsequence $\delta=\{\delta_j\}$ of $\theta$ such that \begin{equation}H^{ \delta}_xf(\xi)= \cot_K(|px|)\end{equation} almost everywhere  $\xi\in\Lambda_x$.

(If $K>0$, we add assumption $|px|<\pi/\sqrt K$).\end{lem}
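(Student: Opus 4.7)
The plan is to combine the subsequence extraction of Theorem 3.3 with a Riccati--Jensen argument applied to the pointwise propagation of Lemma 3.2: the integral equality at $x$ will be pushed back to nearby points $y_m$, an $L^2$-rigidity step will give almost-everywhere convergence there, and forward propagation will then transfer this to a pointwise upper bound at $x$.

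First, apply Theorem 3.3 to the given sequence $\theta$ to obtain a subsequence $\delta \subset \theta$ with $\Delta^\delta f(x) \leq (n-1)\cot_K(|px|)$. Specializing the hypothesis (4.1) to the subsequence $\delta$ itself forces the integral equality
\begin{equation*}
\oint_{\Lambda_x} H^\delta_x f(\xi)\,d\xi = \cot_K(|px|).
\end{equation*}

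Second, pick a sequence of points $y_m \in \gamma$ with $|py_m| < |px|$ and $\mu_m := |y_m x| \to 0$. By iterating the construction inside the proof of Theorem 3.3 at each $y_m$ (adjoining each $y_m$ to the chain of auxiliary points) together with a standard diagonal extraction, refine $\delta$ so that for every $m$ we obtain an isometry $T_m : \Lambda_x \to \Lambda_{y_m}$ realizing the pointwise propagation inequality of Lemma 3.2, along with the integral bound $\phi_m := \oint_{\Lambda_{y_m}} H^\delta_{y_m} f \leq \cot_K(|py_m|)$. Now optimize $l$ in Lemma 3.2 pointwise in $\xi$ (the optimum lies near $l = 1$ for small $\mu_m$), average over $\Lambda_x$ via the isometry $T_m$, and apply Jensen's inequality while retaining the variance $S_m := \oint (H^\delta_{y_m} f - \phi_m)^2 \geq 0$: this yields the discrete Riccati inequality
\begin{equation*}
\cot_K(|px|) \leq \phi_m - \mu_m\bigl[\phi_m^2 + S_m + K\bigr] + o(\mu_m).
\end{equation*}
Comparing with the identity $\cot_K(|px|) - \cot_K(|py_m|) = -\mu_m\bigl(K + \cot_K^2(|py_m|)\bigr) + O(\mu_m^2)$ and using $\phi_m \leq \cot_K(|py_m|)$, one solves the resulting quadratic to conclude $\cot_K(|py_m|) - \phi_m = o(\mu_m)$ and $S_m = o(1)$.

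Third, $S_m \to 0$ and $\phi_m \to \cot_K(|px|)$ say that $H^\delta_{y_m} f$ converges to $\cot_K(|px|)$ in $L^2(\Lambda_{y_m})$; pulling back by the isometry $T_m$, the same convergence holds in $L^2(\Lambda_x)$ for $H^\delta_{y_m} f \circ T_m$. Passing to a further subsequence in $m$, we may assume almost-everywhere convergence on $\Lambda_x$. Plugging this into the pointwise propagation of Lemma 3.2 with $l = 1$,
\begin{equation*}
H^\delta_x f(\xi) \leq H^\delta_{y_m} f(T_m\xi) - \mu_m\bigl(g_x(\xi) - \epsilon\bigr),
\end{equation*}
and sending $m \to \infty$ gives the pointwise upper bound $H^\delta_x f(\xi) \leq \cot_K(|px|)$ for a.e.\ $\xi \in \Lambda_x$. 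Combined with the integral equality of the first step, the pointwise equality (4.2) follows almost everywhere.

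The principal obstacle is the bookkeeping of subsequences and isometries $T_m$: a single $\delta \subset \theta$ must simultaneously realize the propagation at every $y_m$, satisfy the integral equality at $x$, and support an almost-everywhere subsequential limit $H^\delta_{y_m} f(T_m\cdot) \to \cot_K(|px|)$ on a common full-measure subset of $\Lambda_x$. A secondary subtlety is the handling of the auxiliary parameters $\epsilon > 0$ and $K' < K$ from the proof of Theorem 3.3, which must be driven to $0$ and $K$ respectively by a further diagonal argument so that the slack in the Riccati inequality truly vanishes.
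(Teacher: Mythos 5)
Your proposal follows the same scheme as the paper's proof: propagate the Hessian backward from an auxiliary point on the $p$-side of $x$ via Lemma~3.2, optimize the parameter $l$ to produce a discrete Riccati inequality, combine the Laplacian comparison at the auxiliary point (Theorem~3.3), the hypothesized integral equality at $x$, and Jensen's inequality to extract a variance estimate, and finish with diagonal arguments. The paper does this with a single auxiliary point $z$ per fixed $\epsilon$, constructing an explicit function $h$ on $\Lambda_x$ with $H^\delta_x f \le h$ and $\oint_{\Lambda_x}\big(h-\cot_K(|px|)\big)^2 \le C\epsilon$, and then using Chebyshev's inequality to bound $\nu\big(\{H^\delta_x f \ge \cot_K(|px|)+\epsilon_1\}\big)$.

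The gap in your write-up is in Step~3. For a fixed $\epsilon>0$ the Riccati inequality only yields $\limsup_m S_m \le 2\epsilon$ (the $\epsilon$-slack enters via $\oint g_{\gamma(t)} \ge K-\epsilon$ and again via the comparison $\phi_m\le\cot_K(|py_m|)$), not $S_m = o(1)$. Thus for fixed $\epsilon$ there is no $L^2$-convergence of $H^\delta_{y_m}f\circ T_m$ to $\cot_K(|px|)$ and no a.e.-convergent subsequence in $m$ to extract; one only gets $\oint\big(H^\delta_{y_m}f\circ T_m-\cot_K(|px|)\big)^2\le C\epsilon$ for large $m$. The $\epsilon$-diagonal you flag as a ``secondary subtlety'' is therefore load-bearing, and it cannot be appended after Step~3 --- it must be interleaved with the subsequence extraction in $m$, which in turn destabilizes the objects you are extracting against, since each new $\epsilon$ changes the family $\{g_{\gamma(t)}\}$, hence the isometries $T_m$, the bound (4.4) on $g_x$, and the subsequence $\delta$. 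The cleanest repair, and what the paper actually does, is to skip the a.e.-convergence step entirely: for each $\epsilon$, convert the $L^2$-smallness of $h-\cot_K(|px|)$ at a single auxiliary point $z$ into a Chebyshev bound on the measure of $\{H^\delta_x f \ge \cot_K(|px|)+\epsilon_1\}$, and only then diagonalize over $\epsilon$ and $\epsilon_1$. This also removes the $m$-diagonal, which was never needed.
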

 \begin{proof}
At first, we will prove the following claim: \\
\textbf{Claim:} For any $\epsilon>0$, we can find a subsequence
$\{\delta_j\}$ of $\theta$ and an integrable function $h$ on
$\Lambda_x$ such that $$H^\delta_xf\ls h\quad {\rm and }\quad
\oint_{\Lambda_x} \big(h-\cot_K(|px|)\big)^2\ls
\big(3+2|\cot_K(|px|)|\big)\epsilon.$$

  By our definition of   Ricci curvature $\geqslant (n-1)K$ along $\gamma$, there exists a continuous function family $\{g_{\gamma(t)}\}_{0<t<\ell}\in\mathcal F$  such that $$\oint_{\Lambda_{\gamma(t)}}g_{\gamma(t)}\gs K-\epsilon,\qquad \forall t\in(0,\ell).$$
We may assume $g_x\gs k_0$, otherwise, we replace it by $\max\{g_x, k_0\}$.

    By the definition of Condition $(RC)$, we have a neighborhood $I (\subset (0,\ell))$ of $\gamma^{-1}(x)$ such that for arbitrarily taking   a point $w\in \gamma(I)$  with $|pw|<|px|,$
there exists a subsequence  $\wt\delta=\{\wt\delta_j\}$ of $\theta$
and an isometric $T: \Lambda_x\rightarrow\Lambda_w$ such that (1.1)
holds.  By using  Lemma 3.2 and choosing $l=1$, we have
\begin{equation}(g_x- \epsilon)\cdot|xw|\ls H^{\wt\delta}_w
f-H^{\wt\delta}_xf.\end{equation} By (3.2) and the fact that $f$ is
$\cot_{k_0}(|p\cdot|)-$concave, we have $$H^{\wt\delta}_xf\gs
-\cot_{k_0}(|x\gamma(\ell)|)\quad {\rm and}\quad H^{\wt\delta}_wf\ls
\cot_{k_0}(|pw|).$$ Thus by combining these with (4.3) and the fact
$g_x\gs k_0$, we get \begin{equation}|g_x|\ls C_4\end{equation} for
some constant $C_4,$ which may depend on $\epsilon,x,$ and $|I|$.

Choose a point $z\in\gamma(I)$ with $|px|/2<|pz|<|px|$ and $|xz|\ll \min\{\epsilon,|I|\}$. Then, by Condition $(RC)$, there exists a subsequence $\{\delta_j'\}$ of $\theta$  and an isometry  $T:\ \Lambda_x\rightarrow\Lambda_z$ satisfying (1.1). From Theorem 3.3, we can find a subsequence $\{\delta_j\}\subset\{\delta_j'\}$  such that \begin{equation} \Delta^\delta f(z)\ls (n-1)\cdot\cot_K(|pz|).\end{equation}
We set, for any $\xi\in\Lambda_x,$ \begin{align*}\mu&=|xz|,\\ l&=l(\xi)=\big(1/\mu+\frac{\mu}{6}(g_x-\epsilon)\big)\cdot\Big(1/\mu-\mu(g_x-\epsilon)/3+H^\delta_zf(T\xi)\Big)^{-1}\end{align*} and \begin{equation}h_{xz}(\xi)=l^2\cdot H^\delta_zf(T\xi)+\frac{(l-1)^2}{\mu}-\frac{l^2+l+1}{3}\mu(g_x-\epsilon).\end{equation}

 By noting (4.4) and that  $$-\cot_{k_0}(|x\gamma(\ell)|)\ls -\cot_{k_0}(|z\gamma(\ell)|)\ls H^\delta_zf\ls \cot_{k_0}(|pz|)\ls\cot_{k_0}(|px|/2),$$
 we get  $l(\xi)>0$ for $\mu$ is sufficiently small. Thus by Lemma 3.2, we have $$H^\delta_xf\ls h_{xz}.$$ Consequently,
$$H^\delta_xf\ls h,\quad {\rm on}\ \Lambda_x,$$ where
 $h=\min\{h_{xz}, \cot_{k_0}(|px|)\}$. Then, by combining this with (4.1), we get
  \begin{equation} \oint_{\Lambda_x} h\gs \cot_K(|px|).\end{equation}
Therefore, by (4.5) and (4.7), there holds\begin{equation}\begin{split}\oint_{\Lambda_x} h-\oint_{\Lambda_z} H^\delta_zf&\gs\cot_K(|px|)-\cot_K(|pz|)\\ &\gs-\mu\big(K+\cot_K^2(|px|)\big)-C_{5}\mu^2,\end{split}\end{equation}where $$C_5=\max_{|pz|\ls t\ls |px|} |\cot_K''(t)|\ls \max_{|px|/2\ls t\ls |px|} |\cot_K''(t)|.$$

 On the other hand, rewriting the equation (4.6), we have \begin{align*}\Big(1/\mu&-\mu(g_x-\epsilon)/3+H^\delta_{z}f\circ T\Big)\cdot h_{xz}\\ &= -(g_x-\epsilon)+H^\delta_{z}f\circ T\cdot\Big(1/\mu-\mu(g_x-\epsilon)/3\Big) +\big(\mu(g_x-\epsilon)\big)^2/12.\end{align*}
 By the facts that $h\ls h_{xz}$ and $1/\mu-\mu(g_x-\epsilon)/3+H^\delta_{z}f\circ T>0$, we get
  \begin{align*}\Big(1/\mu&-\mu(g_x-\epsilon)/3+H^\delta_{z}f\circ T\Big)\cdot h\\ &\ls -(g_x-\epsilon)+H^\delta_{z}f\circ T\cdot\Big(1/\mu-\mu(g_x-\epsilon)/3\Big) +\big(\mu(g_x-\epsilon)\big)^2/12.\end{align*}
 That is,  \begin{equation}\Big(1/\mu-D\Big)\cdot (h-H^\delta_{z}f\circ T)\ls -(g_x-\epsilon)-h^2 +\big(\mu(g_x-\epsilon)\big)^2/12,\end{equation}
 where $D=\mu(g_x-\epsilon)/3-h$.

Denote that $C_6=\max|D|=\max|h+\mu(g_x-\epsilon)/3|$, which is  independent of $\mu.$  Thus we get
\begin{equation}\begin{split}
\oint_{\Lambda_x}\frac{\epsilon-g_x}{1/u-D}&=\oint_{\Lambda_x}\frac{(\epsilon-g_x)^+}{1/u-D}-\oint_{\Lambda_x}\frac{(\epsilon-g_x)^-}{1/u-D}\\
&\ls \frac{\oint_{\Lambda_x}(\epsilon-g_x)^+}{1/u-C_6}-\frac{\oint_{\Lambda_x}(\epsilon-g_x)^-}{1/u+C_6}\\
&=\frac{1/\mu\oint_{\Lambda_x}(\epsilon-g_x)+C_{6}\oint_{\Lambda_x}|g_x-\epsilon|}{1/\mu^2-C^2_{6}}.
\end{split}\end{equation} By (4.4), (4.10) and the Ricci curvature condition that $\oint_{\Lambda_x}g_x\gs K-\epsilon$, we have
\begin{equation}
\oint_{\Lambda_x}\frac{\epsilon-g_x}{1/u-D}\ls \mu(2\epsilon-K)+C_7\mu^2,\end{equation}
where constant $C_7$ is independent on $\mu$.\\
>From (4.9) and (4.4), we get
\begin{equation}\oint_{\Lambda_x} h-\oint_{\Lambda_x} H^\delta_zf\circ T\ls\mu(2\epsilon-K)+C_7\mu^2-\frac{\oint_{\Lambda_x} h^2}{1/\mu+C_{6}}+\frac{(C_{4}+\epsilon)^2\mu^2}{1/\mu-C_{6}}. \end{equation}By combining  (4.8), (4.12) and noting that $T$ is an isometry,   we have $$\oint_{\Lambda_x} h^2\ls \cot^2_K(|px|)+2\epsilon+C_{8}\mu,$$where constant $C_8$ is independent on $\mu$. Therefore, \begin{equation}\oint_{\Lambda_x} h^2\ls \cot^2_K(|px|)+3\epsilon\end{equation} as $\mu$ suffices small.

Note that  (4.12) implies $$\oint_{\Lambda_x} h\ls \oint_{\Lambda_x} H^\delta_zf\circ T+C_9\mu,$$where constant $C_9$ is independent on $\mu$.
Using (4.5) and  noting that $T$ is an isometry, we have $$\oint_{\Lambda_x} h\ls\cot_K(|pz|)+C_9\mu\ls\cot_K(|px|)+\mu\big(K+\cot^2_K(|pz|)\big)+C_9\mu.$$
Since  $|px|/2<|pz|<|px|$, we have $$\oint_{\Lambda_x} h\ls\cot_K(|px|)+C_{10}\mu,$$ where constant $C_{10}$ is independent on $\mu$.
Thus, when $\mu$ is sufficiently small, we get \begin{equation}\oint_{\Lambda_x} h\ls\cot_K(|px|)+\epsilon.\end{equation}
By combining (4.7) and (4.14), we obtain \begin{equation}\cot_K(|px|)\cdot\oint_{\Lambda_x} h\gs \cot^2_K(|px|)-\epsilon\cdot|\cot_K(|px|)|.\end{equation}
Hence, by (4.13) and (4.15), we have $$\oint_{\Lambda_x} \big(h-\cot_K(|px|)\big)^2\ls \big(3+2|\cot_K(|px|)|\big)\cdot\epsilon.$$
This completes the proof of the claim.

Now let us continue the proof of the lemma.

  Given any $\epsilon_1>0$, the  above claim implies that the measure \begin{align*}\nu&\big(\{\xi\in \Lambda_x:\ H^\delta_xf\gs\cot_K+\epsilon_1\}\big)\\ &\ls\nu\big(\{\xi\in \Lambda_x:\ \big| h-\cot_K(|px|)\big|\gs\epsilon_1\}\big)\ls \big(3+2|\cot_K(|px|)|\big)\epsilon/\epsilon_1^2.\end{align*}
Letting $\epsilon\to 0^+$, by  a standard diagonal argument, we can obtain a subsequence of $\delta$, still denoted by $\delta$, such that  $$\nu\big(\{\xi\in \Lambda_x:\ H^{ \delta}_xf\gs\cot_K+\epsilon_1\}\big)=0.$$
By the arbitrariness of $\epsilon_1$, after a further diagonal argument, we obtain a  subsequence  of $\delta$, denoted by $\delta$ again, such that $$\nu\big(\{\xi\in \Lambda_x:\ H^{ \delta}_xf>\cot_K\}\big)=0.$$ Thus we have  $$H^{ \delta}_xf\ls \cot_K(|px|)$$ almost everywhere in $\Lambda_x$.

 Finally , by combining  (4.1) and the definition of $\Delta^\delta f$,  we conclude that  $$H^{ \delta}_xf= \cot_K(|px|)$$ almost everywhere in $\Lambda_x$. Therefore we have completed the proof of the lemma.
\end{proof}

In order to deal with the zero-measure set in  the above Lemma, we need the following \emph{segment inequality} of Cheeger and Colding [CC1]. See also [R] for a statement that is stronger than the following proposition.
\begin{prop} (Segment inequality)

Let $M$ be an $n$-dimensional Alexandrov space with curvature $\gs
k_0$, ( for some constant $k_0<0$). Let $A_1, A_2\subset M$ be two
open sets, and let $\gamma_{y_1,y_2}$ be a geodesic from $y_1$ to
$y_2$ with arc-parametrization. Assume $W\subset M$ is an open set
with $$\bigcup_{y_1\in A_1,\ y_2\in A_2}\gamma_{y_1,y_2} \subset
W.$$

If $e$ be a non-negative integrable function on $W$, then \begin{equation}
\int_{A_1\times A_2}\int_0^{|y_1y_2|}e(\gamma_{y_1,y_2}(s))ds\ls C(n,k_0,D)\cdot D\cdot\big(vol(A_1)+vol(A_2)\big)\int_We,
\end{equation}
where $D=\sup_{y_1\in A_1,\ y_2\in A_2}|y_1y_2|$ and $$C(n,k_0,D)=\big(\sinh(\sqrt{-k_0}D)/\sinh(\sqrt{-k_0}D/2)\big)^{n-1}.$$
\end{prop}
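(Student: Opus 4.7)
The plan is to adapt the Cheeger--Colding argument from the Riemannian setting to Alexandrov spaces, where the necessary tools (exponential map, polar decomposition, Bishop--Gromov volume comparison) are all available for $n$-dimensional Alexandrov spaces with curvature $\gs k_0$. The Bishop--Gromov monotonicity in this setting is recorded as Corollary A.3 of the paper's Appendix, so the Riemannian proof carries over once one checks that polar coordinates and the Jacobian of the exponential map behave as in the smooth case on a full-measure subset (by the Otsu--Shioya/Perelman regularity theory).

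First I would split each geodesic at its midpoint, writing the double integral as $I_1+I_2$, where $I_1$ covers $s\in[0,|y_1y_2|/2]$ and $I_2$ covers $s\in[|y_1y_2|/2,|y_1y_2|]$. Under the reparametrization $s\mapsto|y_1y_2|-s$ combined with the interchange $y_1\leftrightarrow y_2$, $I_2$ becomes a ``first-half'' integral with the roles of $A_1$ and $A_2$ swapped. Thus it suffices to prove
\[
I_1\ls C(n,k_0,D)\cdot D\cdot{\rm vol}(A_1)\cdot\int_W e\,d{\rm vol};
\]
the corresponding bound for $I_2$ (with ${\rm vol}(A_2)$) then follows by symmetry, and adding the two yields the stated inequality.

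Second, fixing $y_1\in A_1$ and introducing polar coordinates $(t,\xi)\in[0,D]\times\Sigma_{y_1}$ around $y_1$, I would write $y_2=\exp_{y_1}(t\xi)$ with volume form $dy_2=J_{y_1}(t,\xi)\,dt\,d\xi$ on a full-measure subset, and rewrite the inner integral as
\[
\int_{A_2}\int_0^{|y_1y_2|/2}e(\gamma_{y_1,y_2}(s))\,ds\,dy_2=\int_{\Sigma_{y_1}}\int_0^D \mathbf{1}_{A_2}(\exp_{y_1}(t\xi))J_{y_1}(t,\xi)\int_0^{t/2}e(\exp_{y_1}(s\xi))\,ds\,dt\,d\xi.
\]
Applying Bishop--Gromov monotonicity of $t\mapsto J_{y_1}(t,\xi)/\sinh(\sqrt{-k_0}\,t)^{n-1}$ at the pair $(t/2,t)$ then gives the crucial Jacobian estimate
\[
J_{y_1}(t,\xi)\ls\big(\sinh(\sqrt{-k_0}\,t)/\sinh(\sqrt{-k_0}\,t/2)\big)^{n-1}J_{y_1}(t/2,\xi)\ls C(n,k_0,D)\,J_{y_1}(t/2,\xi)
\]
for $t\ls D$, since $t\mapsto\sinh(\sqrt{-k_0}t)/\sinh(\sqrt{-k_0}t/2)=2\cosh(\sqrt{-k_0}t/2)$ is increasing.

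Third, after substituting this estimate and changing variable $u=t/2$, I would recognize $J_{y_1}(u,\xi)\,du\,d\xi$ as the volume element $dm$ at the midpoint $m=\exp_{y_1}(u\xi)$, which lies in $W$ by hypothesis; a coarea/Fubini manipulation (swapping the orders of $s$ and $u$ and using that the $s$-range has extent at most $D/2$) then converts the resulting triple integral into a bound of the form $C(n,k_0,D)\cdot D\cdot\int_W e\,d{\rm vol}$, and integrating finally over $y_1\in A_1$ produces the ${\rm vol}(A_1)$ factor. I anticipate the main technical obstacle to be precisely this last Jacobian bookkeeping: the finite constant $C(n,k_0,D)$ of the statement comes exactly from the Bishop--Gromov ratio $J(t)/J(t/2)$, whereas using instead $J(t)/J(s)$ for $s$ near $0$ would blow up; it is the split at the midpoint that allows the small-$s$ singularity to be handled cleanly. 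This argument is standard in the Riemannian setting ([CC1]); in the Alexandrov setting it carries over once Bishop--Gromov and the regularity of the exponential map are in hand.
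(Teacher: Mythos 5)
The paper does not give a proof of this proposition; it simply cites Cheeger--Colding [CC1] (and [R]), so the relevant comparison is with the standard Cheeger--Colding argument, which your proposal rightly tries to adapt. Your overall strategy (split at the midpoint, use polar coordinates, invoke Bishop--Gromov monotonicity of the Jacobian of the exponential map) is the correct one, and the ingredients you invoke are indeed available for Alexandrov spaces with curvature bounded below.

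However, there is a genuine gap in the way you pair the two halves of the integral with the two volume factors, and it causes your Jacobian estimate to miss its target. You propose to fix $y_1$, use polar coordinates at $y_1$, and bound the \emph{first-half} integral $I_1$ (over $s\in[0,|y_1y_2|/2]$) by $C\,D\cdot\mathrm{vol}(A_1)\int_W e$. After writing $y_2=\exp_{y_1}(t\xi)$ and applying $J_{y_1}(t,\xi)\ls C\,J_{y_1}(t/2,\xi)$, then substituting $u=t/2$, you arrive at
\[
2C\int_{\Sigma_{y_1}}\int_0^{D/2}J_{y_1}(u,\xi)\int_0^{u}e\big(\exp_{y_1}(s\xi)\big)\,ds\,du\,d\xi.
\]
Here the Jacobian $J_{y_1}(u,\xi)$ is pegged at the midpoint $u$, but the function $e$ is evaluated at the points $s\in[0,u]$. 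To convert this to $\int_W e\,d\mathrm{vol}$ you would need the Jacobian at $\exp_{y_1}(s\xi)$, i.e. $J_{y_1}(s,\xi)$, and Bishop--Gromov only gives $J_{y_1}(u,\xi)\ls\big(A_{k_0}(u)/A_{k_0}(s)\big)\,J_{y_1}(s,\xi)$, a ratio that blows up as $s\to 0$. Swapping the order of $s$ and $u$ does not help: one gets $\int_s^{D/2}J_{y_1}(u,\xi)\,du$ weighting $e$, which is still not comparable to $J_{y_1}(s,\xi)\cdot D$ for small $s$. In short, your estimate $J_{y_1}(t,\xi)\ls C\,J_{y_1}(t/2,\xi)$ is true, but it is not the estimate the argument needs.

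The fix is to swap the pairing: fixing $y_1$ and using polar coordinates at $y_1$, one should bound the \emph{second-half} integral $I_2$ (over $s\in[|y_1y_2|/2,|y_1y_2|]$). There $s\in[t/2,t]$, so Bishop--Gromov gives the uniform bound $J_{y_1}(t,\xi)\ls\big(A_{k_0}(t)/A_{k_0}(s)\big)J_{y_1}(s,\xi)\ls\big(A_{k_0}(t)/A_{k_0}(t/2)\big)J_{y_1}(s,\xi)\ls C(n,k_0,D)\,J_{y_1}(s,\xi)$, precisely at the point where $e$ is evaluated. A Fubini swap of $s$ and $t$ (the $t$-range $[s,\min(2s,D)]$ has length $\ls D$) then gives $I_2(y_1)\ls C\,D\int_W e$, and integrating over $y_1\in A_1$ produces $\mathrm{vol}(A_1)$. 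The bound $I_1\ls C\,D\,\mathrm{vol}(A_2)\int_W e$ then follows by the symmetry $y_1\leftrightarrow y_2$, $s\mapsto|y_1y_2|-s$ that you already describe. This is exactly the Cheeger--Colding pairing, and is the one that closes.
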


We now define the \emph{upper Hessian} of $f$, $\overline{Hess}_xf:\ T_x\rightarrow\mathbb{R}\cup\{-\infty\}$ by  \begin{equation}\overline{Hess}_xf(v,v)\overset{def}{=}\limsup_{s\to0}\frac{f\circ\exp_x(s\cdot v)-f(x)-d_xf(v)\cdot s}{s^2/2}\end{equation} for any $v\in T_x$.

Clearly, this definition also works for any semi-concave function on $M$. If $u$ is a $\lambda-$concave function, then its upper Hessian $\overline{Hess}_xu(\xi,\xi)\ls \lambda$ for any $\xi\in\Sigma_x$.

 For a semi-concave function $u$, we denote its regular set  $Reg_u$ by
$$Reg_u:=\big\{x\in M: {\rm there\ exists\ Perelman's\ Hessian \ of}\ u\ {\rm at}\ x \big\}.$$ It was
showed in  [Per2] that $Reg_u$ has full measure for any semi-concave
function $u$. It is clear that $\overline{Hess}_xu=Hess_xu$ for any
$x\in Reg_u$.

\begin{defn}  Let $p\in M$. The \emph{cut locus} of $p$, denoted by $Cut_p$, is defined to be the set all of points $x$ in $M$ such that geodesic $px$, from $p$ to $x$, can not be extended.\end{defn}
It was shown in [OS] that $Cut_p$ has zero (Hausdorff) measure (see
also [Ot]).

  Set $ W_p=M\backslash(\{p\}\cup Cut_p).$ For any two points $x,y\in M$ with $ x\not=y$,   a direction from $x$ to $y$ is denoted by  $\ua_x^y$.

The following two lemmas are concerned with   the rigidity part of Theorem 3.3.
\begin{lem}
Let $M$ be an $n$-dimensional Alexandrov space with Ricci curvature $\gs (n-1)K$ and let $f=dist_p$. Suppose that $B_p(R)\backslash\{p\}\subset W_p$ for some $0<R\ls\pi/\sqrt K$ (if $K\ls 0$, we set $\pi/\sqrt K$ to be $+\infty$). Assume that for each $x\in B_p(R)\backslash\{p\}$, there exists a sequence $\theta:=
 \{\theta_j\}_{j=1}^\infty\in \mathcal S$ such that $$\Delta^{\theta'}f(x)=(n-1)\cdot\cot_K(|px|)$$ for any subsequence $\theta'\subset \theta$.

  Then the function $\varrho_K\circ f$ is $(1-K\cdot\varrho_K\circ f)-$concave in $B_p(R)\backslash\{p\}$.
  \end{lem}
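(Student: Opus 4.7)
The plan is to combine Lemma~4.1 with the segment inequality (Proposition~4.2) to promote an almost-everywhere identity for the Hessian of $f$ into concavity along every geodesic. The key observation is that at any $x\in Reg_f\cap(B_p(R)\setminus\{p\})$ Perelman's Hessian of $f$ exists, so $H_x^\delta f(\xi)=\mathrm{Hess}_x f(\xi,\xi)$ for every $\xi\in\Sigma_x$ and every subsequence $\delta$. Choosing $\delta$ as in Lemma~4.1 forces $\mathrm{Hess}_x f(\xi,\xi)=\cot_K(|px|)$ for almost every $\xi\in\Lambda_x$; continuity of the bilinear form $\mathrm{Hess}_x f$ then propagates the equality to every $\xi\in\Lambda_x$, and together with $\mathrm{Hess}_x f(\uparrow_x^p,\uparrow_x^p)=0$ this yields $\mathrm{Hess}_x f=\cot_K(|px|)\cdot g|_{(\uparrow_x^p)^\perp}$ at every such $x$.

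Applying the chain rule to $u:=\varrho_K\circ f$ and using the model identities $\varrho_K''(s)=1-K\varrho_K(s)$ and $\varrho_K'(s)\cot_K(s)=\varrho_K''(s)$, a direct computation gives
\[
\mathrm{Hess}_x u(v,v)=\varrho_K''(f(x))\,df_x(v)^2+\varrho_K'(f(x))\,\mathrm{Hess}_x f(v,v)=\bigl(1-K\varrho_K(f(x))\bigr)|v|^2
\]
for every $v\in T_x$ at every $x\in Reg_f\cap(B_p(R)\setminus\{p\})$, a set of full measure. Hence $\overline{\mathrm{Hess}}_x u\ls(1-K\varrho_K(f(x)))\cdot g$ almost everywhere in $B_p(R)\setminus\{p\}$.

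To upgrade this almost-everywhere bound to the claimed $(1-K\varrho_K\circ f)$-concavity, fix $x_0\in B_p(R)\setminus\{p\}$, $\varepsilon>0$, and a small convex neighborhood $U$ of $x_0$ on which $|K\varrho_K(f)-K\varrho_K(f(x_0))|<\varepsilon$. Let $E\subset U$ be the zero-measure set on which the Hessian bound fails. Proposition~4.2 applied to $\mathbf{1}_E$ yields that for almost every pair $(y_1,y_2)\in U\times U$ the minimizing geodesic $\eta_{y_1,y_2}$ meets $E$ only on a null set of parameters; along such a \emph{good} geodesic, the semi-concave one-variable function $u\circ\eta_{y_1,y_2}$ is twice differentiable almost everywhere with second derivative bounded by $1-K\varrho_K(f(x_0))+\varepsilon$, and is therefore $(1-K\varrho_K(f(x_0))+\varepsilon)$-concave. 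Density of good pairs and continuity of $u$ propagate this inequality to every geodesic in $U$; letting $\varepsilon\to 0$ completes the proof. The main technical subtlety is the identification at regular parameters $t$ of the one-variable second derivative $(u\circ\eta)''(t)$ with $\mathrm{Hess}_{\eta(t)}u(\eta'(t),\eta'(t))$, which requires careful use of the two-sided Taylor character of Perelman's Hessian.
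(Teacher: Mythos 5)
Your proposal is correct and follows essentially the same route as the paper's proof of Lemma~4.4: Lemma~4.1 gives the Hessian identity at regular points, the segment inequality (Proposition~4.2) promotes this to almost every geodesic, the non-positivity of the singular part of the second derivative of a semi-concave one-variable function (the paper cites 1.3(3) in [PP] for this) upgrades the a.e.\ bound along a good geodesic to a global one-variable concavity estimate, and a limiting argument propagates the estimate to all geodesics.

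The only differences are organizational, not substantial. You apply $\varrho_K$ early and bound $\overline{Hess}_xu$ directly; the paper bounds $\overline{Hess}_xf$ via its auxiliary function $u(z)=\sup_\xi|\overline{Hess}_zf(\xi,\xi)-\cot_K(|pz|)\sin^2|\xi,\uparrow^p_z||$ and changes variables at the end. You feed $\mathbf{1}_E$ into the segment inequality, while the paper feeds the nonnegative deviation $u(z)$; both give the same conclusion since $u$ vanishes a.e. Two small points worth flagging: (i)~the step $\mathrm{Hess}_xf=\cot_K(|px|)\cdot g|_{(\uparrow_x^p)^\perp}$ also requires the cross-terms $\mathrm{Hess}_xf(\uparrow_x^p,w)$ to vanish, not only the diagonal values on $\Lambda_x$ and on $\uparrow_x^p$; the paper's function $u(z)$ has the same implicit need, but you should say a word about why the radial direction is a null eigenvector of the Hessian at a regular point. (ii)~Referring to $U$ as a ``convex neighborhood'' is more than you need and more than Alexandrov geometry guarantees; like the paper, you should instead fix a geodesic and take small balls around its endpoints, noting that all geodesics between them stay in a compact set with a uniform lower curvature bound, which is what the segment inequality actually requires.
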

\begin{proof} It suffices to show one variable function  $h_p:=\varrho_K\circ f\circ\gamma(s)$ satisfies that
$$h_p''\ls 1-Kh_p$$ for any geodesic $\gamma(s)\subset B_p(R)\backslash\{p\}.$
 Let  $ \chi(s)$ be an continuous function on  an open interval $(a,b)$. Here and in the sequel we write $\chi''(s)\ls B$ for $ s\in (a,b)$   if $\chi(s+\tau)\ls \chi(s)+A\cdot \tau+ B\cdot \tau^2/2+o(\tau^2)$ for some $A\in \mathbb{R}$.  $\chi''(s)<+\infty$ means that $\chi''(s)\ls B $ for some $B\in\mathbb{R}$. If $\chi_1$ is another continuous function on $(a,b)$, then $\chi''\ls \chi_1$ means $\chi''(s)\ls \chi_1(s)$ for all $s\in (a,b).$

Fix a geodesic $\gamma\subset B_p(R)\backslash\{p\}$. Let $x=\gamma(0)$, $y=\gamma(l).$ Without loss of generality, we can assume that $ \gamma$ is the unique geodesic from $x$ to $y$ and $$|px|+|py|+|xy|<2R.$$

We consider the function $u: W_p\rightarrow \mathbb{R}^+\cup\{0\}$, \begin{equation}u(z)=\sup_{\xi\in \Sigma_z}\Big|\overline{Hess}_zf(\xi,\xi)-\cot_K(|pz|)\cdot\sin^2(|\xi,\uparrow^p_z|)\Big|.\end{equation}

For any point $z\in Reg_f\cap B_p(R)$, $\overline{Hess}_zf$ is a bilinear form on $T_z$ and $\overline{Hess}_zf(\uparrow^p_z,\uparrow^p_z)=0$. Let $$\Lambda_z=\{\xi\in\Sigma_z: \angle(\xi,\uparrow_z^p)=\pi/2\}.$$ By Lemma 4.1,
we have $\overline{Hess}_zf(\xi,\xi)=H^{\delta}_zf=\cot_K(|pz|)$ on $\Lambda_z$ for some subsequence $\delta$ of $\theta$,  and hence $u(z)=0.$

  Since $Reg_f$ has full measure in $B_p(R),$  we conclude that $u\equiv0$ almost everywhere in $B_p(R)$.

Given any positive number $\epsilon>0$ such that $$\epsilon\ll \min\big\{|px|,|py|,|xy|, 2R-(|px|+|py|+|xy|)\big\}.$$ Let $x_1\in B_x(\epsilon)$ and $y_1\in B_y(\epsilon)$, and let $\gamma_{x_1,y_1}(s)$ be a geodesic from $x_1$ to $y_1$. By triangle inequality, it is easy to see $$|px_1|+|x_1y_1|+|py_1|<2R$$ as $\epsilon$ is sufficiently small. Thus $\gamma_{x_1,y_1}\in B_p(R).$

Set $u_{x_1,y_1}(s)=u(\gamma_{x_1,y_1}(s)).$ By applying Proposition
4.2 to $A_1=B_x(\epsilon),$ $ A_2=B_y(\epsilon)$, $W=B_p(R)$ and
function $u$, we know that there exist two points $x_1\in
B_x(\epsilon)$ and $y_1\in B_y(\epsilon)$ such that
$u_{x_1,y_1}(s)=0$ almost everywhere on $(0,|x_1y_1|)$.

Consider a $s_0\in(0,|x_1,y_1|)$ such that $u_{x_1,y_1}(s_0)=0$.  Set $z=\gamma_{x_1,y_1}(s_0)$, $\zeta^+=\gamma^+_{x_1,y_1}(s_0)$ and $\zeta^-=\gamma^-_{x_1,y_1}(s_0)$. Then we have $$\overline{Hess}_{z}f(\zeta^+,\zeta^+)=\overline{Hess}_{z}f(\zeta^-,\zeta^-)=\cot_K(|pz|)\cdot\sin^2(|\zeta^+,\uparrow_z^p|).$$
Therefore, for function $\wt f(s)=f\circ \gamma_{x_1,y_1}(s)$, we get
\begin{equation}\begin{split}\wt f(h+s_0)&\ls \wt f(s_0)+h \wt f^+(s_0)+F(s_0)\cdot h^2/2+o(h^2),\\
\wt f(-h+s_0)&\ls \wt f(s_0)-h \wt f^-(s_0)+F(s_0)\cdot
h^2/2+o(h^2),\end{split}\end{equation}
 for any $h>0$, where
 \begin{equation*}F(s_0)=\cot_K(|pz|)\cdot\sin^2(|\zeta^+,\uparrow_z^p|)=\cot_K(|pz|)\cdot\big(1-\cos^2(|\zeta^+,\uparrow_z^p|)\big).
 \end{equation*}

By the first variation formula of arc-length, we have
$$\wt{f}^+(s)=-\cos(|\zeta^+,\uparrow_z^p|)\quad {\rm and}\quad \wt{f}^-(s)=-\cos(|\zeta^-,\uparrow_z^p|).$$
 Note that $\gamma_{x_1,y_1}\in W_p$, $$|\zeta^+,\uparrow_z^p|+|\zeta^-,\uparrow_z^p|=\pi,$$
 which implies that   $\wt f(s)$ is continuously differential.
 Then by combining this with  (4.19), we have $$\wt f''(s)\ls F(s)=\cot_K\wt f(s)\cdot\big(1-\wt{f'}^2(s)\big)$$ for almost everywhere $s\in (0,|x_1y_1|)$.
Thus the function $\wt h(s)=\varrho_K\circ\wt f(s)$ satisfies $$\wt h''(s)\ls 1-K\wt h(s)$$
for almost everywhere $s\in (0,|x_1y_1|)$.
On the other hand,  the fact $f$ is semi-concave  implies that $\wt h''(s)<+\infty$ for all $s\in (0,|x_1y_1|)$. Thus, from 1.3(3) in [PP], we have $$\wt h''\ls 1-K\wt h.$$

Letting $\epsilon\to0^+$, we can get point sequences $\{x_i\}$ and  $\{y_i\}$ such that $x_i\to x$, $y_i\to y$ and $$\wt  h_i''\ls 1-K\wt h_i,$$ where $\wt h_i=\varrho_K\circ f\circ\gamma_{x_i,y_i}(s)$.  Since the geodesic from $x$ to $y$ is unique, there exists a subsequence of  geodesics $\gamma_{x_i,y_i}$, which converges to geodesic $\gamma$ uniformly.  Hence $\wt h_i$ converges to $h$ uniformly, and the desired result follows  from 1.3(4) in [PP]. Therefore, we have completed the proof.
\end{proof}
\begin{lem}
 Let $\sigma(t)$ and $\varsigma(t)$ be  two geodesics in $B_p(R)$ with $\sigma(0)=\varsigma(0)=p$, and let  $$\varphi(\tau,\tau')=\wa_K \sigma(\tau)p\varsigma(\tau')$$ be the comparison angle of $\angle \sigma(\tau)p\varsigma(\tau')$ in the $K-$plane. Then, under the same assumptions as Lemma 4.4, we have $\varphi(\tau,\tau')$ is non-increasing with respect to $\tau$ and $\tau'$.

(If $K>0$, we add the assumption that
$\tau+\tau'+|\sigma(\tau)\varsigma(\tau')|<2\pi/\sqrt K$).
\end{lem}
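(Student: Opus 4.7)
The plan is to combine Lemma 4.4 with a Sturm-type ODE comparison to obtain a Toponogov-style distance inequality, and then pass to angle monotonicity via the first variation formula. By the symmetry swapping the roles of $\sigma$ and $\varsigma$, it suffices to prove that $\varphi(\tau,\tau')$ is non-increasing in $\tau$ for each fixed $\tau'$.

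I first establish the distance comparison: for any geodesic $\alpha:[0,L]\to B_p(R)\setminus\{p\}$ from $\sigma(\tau)$ to $\varsigma(\tau')$ (with $L=|\sigma(\tau)\varsigma(\tau')|$), if $\wt\alpha$ is the corresponding geodesic in the $K$-plane comparison triangle $\wt p\wt\sigma(\tau)\wt\varsigma(\tau')$, then
\begin{equation*}
|p\alpha(s)|\gs|\wt p\wt\alpha(s)|\qquad \forall\,s\in[0,L].
\end{equation*}
This follows from Lemma 4.4 by a standard Sturm argument applied to the ODE $y''+Ky=1$: the function $F(s):=\varrho_K(|p\alpha(s)|)$ satisfies $F''\ls 1-KF$ in the sense of 1.3 in [PP], while in the model the function $\wt F(s):=\varrho_K(|\wt p\wt\alpha(s)|)$ satisfies $\wt F''=1-K\wt F$ exactly. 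The equal boundary values at $s=0,L$, together with the length restriction $L<\pi/\sqrt K$ when $K>0$ (guaranteed by the hypothesis $\tau+\tau'+L<2\pi/\sqrt K$), then yield $F\gs\wt F$, which is the distance inequality after applying the monotone $\varrho_K^{-1}$.

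Next, taking the right derivative of the distance inequality at $s=0$ (where both sides equal $\tau$) and using the first variation of arc length, I obtain $\theta\gs\wt\theta$, where $\theta=\angle_{\sigma(\tau)}(\uparrow^p,\uparrow^{\varsigma(\tau')})$ is the interior angle at $\sigma(\tau)$ in $M$ and $\wt\theta$ is the corresponding angle at $\wt\sigma(\tau)$ in the $K$-plane comparison triangle. To convert this into monotonicity of $\varphi$, I use the first variation identity $\partial_\tau d(\tau,\tau')=\cos\theta$ in $M$, while in the $K$-plane sliding $\wt\sigma(\tau)$ along its ray from $\wt p$ at fixed angle $\varphi$ changes the opposite side at rate $\cos\wt\theta$. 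Since $\cos\wt\theta\gs\cos\theta$, the opposite side grows at least as fast in the model as in $M$, so by the strict monotonicity of the opposite side in the enclosed angle (via the $K$-law of cosines), the comparison angle $\varphi(\tau,\tau')$ is non-increasing in $\tau$. An identical argument (or simply the $\sigma\leftrightarrow\varsigma$ symmetry) handles the monotonicity in $\tau'$.

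The main obstacle is ensuring that (a) the geodesic $\alpha$ can be chosen to lie in the region $B_p(R)\setminus\{p\}$ where the concavity from Lemma 4.4 is available, (b) the Sturm ODE comparison is rigorous in the weakened-concavity setting of 1.3 in [PP], and (c) the first-variation formulas apply despite possible non-uniqueness of $\alpha$ and the singular nature of $M$. These technicalities are resolved by working with right-derivatives throughout, exploiting the semi-concavity of $\varrho_K\circ f$ supplied by Lemma 4.4, and arguing on sufficiently short triangles (compatible with the standing triangle-inequality hypothesis in the $K>0$ case).
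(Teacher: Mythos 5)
Your proposal is correct and follows the same overall outline as the paper (first an angle-comparison claim derived from Lemma 4.4, then a one-sided derivative computation on the comparison angle), but you implement the angle-comparison step by a genuinely different ODE argument. The paper erects a hinge in $M^2_K$ (matching $|px|$, $|xy|$ and the angle at $x$) and applies the $(1-K\varrho_K\circ f)$-concavity from Lemma 4.4 as a one-sided, initial-value (Riccati-type) comparison to conclude $|\widehat p\widehat y|\gs |py|$, from which the angle inequality $\angle\bar p\bar x\bar y\ls\angle pxy$ for the true comparison triangle follows immediately. You instead build the full comparison triangle and run a two-sided (Dirichlet boundary-value) Sturm comparison along the opposite side $\sigma(\tau)\varsigma(\tau')$, obtaining $|p\alpha(s)|\gs|\wt p\wt\alpha(s)|$, and then differentiate at $s=0$ to extract the same angle inequality. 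Both rest on exactly the same concavity input from Lemma 4.4; the hinge route is marginally more elementary because the initial-value comparison requires no spectral restriction, whereas your Dirichlet/Sturm route needs $L<\pi/\sqrt K$ for the operator $-(\cdot)''-K(\cdot)$ to be coercive on $(0,L)$ --- a condition you correctly deduce from $\tau+\tau'+L<2\pi/\sqrt K$ via the triangle inequality. The concluding step, converting the interior-angle comparison into $\partial^+_\tau\varphi\ls 0$ through the $K$-law of cosines, coincides with the paper's argument; the paper merely makes the second-order error term explicit (via the $\lambda$-concavity of $dist_\varsigma$) where you work with right derivatives, and both proofs tacitly use that a continuous function with everywhere non-positive right Dini derivative is non-increasing.
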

\begin{proof}
Firstly, we claim that for any triangle $\triangle pxy$, (if $K>0$, we  assume that $|px|+|py|+|xy|<2\pi/\sqrt K$),  there exists a comparison triangle $\triangle \bar p\bar x\bar y$ in the $K-$plane $M^2_K$ such that
\begin{equation}\angle\bar p\bar x\bar y\ls\angle pxy,\quad \angle\bar p\bar y\bar x\ls\angle pyx.\end{equation}

Indeed,  for any triangle $\triangle pxy\in B_p(R)$, there exists a
triangle $\triangle \widehat p\widehat x\widehat y$ in  $M^2_K$ such
that $$|\widehat p\widehat x|=|px|,\qquad |\widehat x\widehat
y|=|xy|,\qquad \angle\widehat p\widehat x\widehat y=\angle pxy,$$
and by Lemma 4.4, we have $$|\widehat p\widehat y|\gs |py|.$$ So by
an obvious reason, we  get the required triangle $\triangle \bar p
\bar x\bar y$.

 Fix $\tau'>0$ and write $\varsigma=\varsigma(\tau')$. We only need  to show $\varphi(\tau):=\varphi(\tau,\tau')$ is non-increasing with respect to $\tau.$

 Let $\triangle \bar \sigma(\tau)\bar p\bar \varsigma$ be a comparison triangle of $\triangle \sigma(\tau) p \varsigma$ in the $K-$plane $M^2_K$ and extend the geodesic $\bar p\bar \sigma(\tau)$ slightly longer to $\bar \sigma(\tau+s)$ for small $s>0$.

 Since the function $dist_\varsigma$ is $\lambda-$concave for some number $\lambda\in \mathbb{R},$ we have \begin{equation}|\varsigma \sigma(\tau+s)|\ls |\varsigma \sigma(\tau)|+s\cdot\big(-\cos\angle \sigma(\tau+s) \sigma(\tau)\varsigma\big)+s^2\lambda/2.\end{equation}
On the other hand, we have
\begin{equation}|\bar\varsigma \bar\sigma(\tau+s)|= |\bar\varsigma \bar\sigma(\tau)|+s\cdot\big(-\cos\angle \bar\sigma(\tau+s) \bar\sigma(\tau)\bar\varsigma\big)+s^2\bar\lambda/2+o(s^2)\end{equation}for some number $\bar\lambda\in \mathbb{R}.$
Note from (4.20) that
 $$\angle\bar \sigma(\tau+s)\bar \sigma(\tau)\bar \varsigma\gs \angle \sigma(\tau+s) \sigma(\tau)\varsigma. $$
By combining this with (4.21), (4.22) and  $|\varsigma \sigma(\tau)|=|\bar\varsigma \bar\sigma(\tau)|$,  we have   \begin{equation}|\varsigma \sigma(\tau+s)|\ls|\bar \varsigma\bar \sigma(\tau+s)|+(-\lambda+\bar\lambda)s^2+o(s^2).\end{equation}
Now, if $K>0$, by cosine law in $M^2_K$, we have
\begin{align*} \cos\wa_K \sigma(\tau+s)&p\varsigma-\cos\wa_K \sigma(\tau)p\varsigma\\ &=\frac{\cos(\sqrt K|\varsigma \sigma(\tau+s)|)-\cos(\sqrt K|\bar \varsigma\bar \sigma(\tau+s)|)}{\sin(\sqrt K|p\sigma(\tau+s)|)\cdot\sin(\sqrt K|p\varsigma|)}\\
&\gs \frac{-(\lambda+\bar\lambda)}{\sin(\sqrt K|p\sigma(\tau+s)|)\cdot\sin(\sqrt K|p\varsigma|)}\cdot s^2.\end{align*}
Hence, we get $$\frac{d^+}{d\tau}\cos\wa_K\sigma(\tau)p\varsigma\gs0.$$If $K\ls 0$, using a similar argument, we can get $\frac{d^+}{d\tau}\wa_K\sigma(\tau)p\varsigma\ls0.$ Therefore we have completed the proof of  the lemma.
\end{proof}

\section{Maximal diameter theorem}
The main purpose of this section is to prove Theorem 1.8.

Bonnet--Myers' theorem asserts that  if an $n$-dimensional Riemannian manifold  has $Ric\gs n-1$, then its diameter $\ls \pi.$  Furthermore, its fundamental group  is finite.

The first assertion, the diameter estimate, has been extend to metric measure space with $CD(n,n-1)$ (see [S2]) or $MCP(n,n-1)$ (see [O1]). Since our condition $Ric\gs n-1$ implies the curvature-dimension condition $CD(n,n-1)$,  the first assertion of Bonnet--Myers' theorem also holds on  an $n$-dimensional Alexandrov space $M$ with $Ric(M)\gs n-1$ and $\partial M=\varnothing$.

Now we consider the second assertion: finiteness  of the fundamental  group.
\begin{prop}Let $M$ be  an $n$-dimensional Alexandrov space without boundary and  $Ric(M)\gs n-1$.
   The order of  fundamental group of $M$, ${\rm ord} \pi_1(M) ,$ satisfies$${\rm ord} \pi_1(M)\ls \frac{\omega_n}{vol(M)}$$ where $\omega_n$ is the volume of $n$-dimensional standard sphere $\mathbb{S}^n$. In particular, if add assumption $vol(M)>\omega_n/2$, $M$ is simply connected.
\end{prop}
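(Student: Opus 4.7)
The plan is to lift the situation to the universal cover $\widetilde M$ and exploit the local nature of the Ricci condition together with the Bishop--Gromov volume comparison stated as Corollary A.3.

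First, I would observe that since $Ric(M)\gs n-1$, the Bonnet--Myers-type consequence already noted in the paper (via $CD(n,n-1)$) gives $\mathrm{diam}(M)\ls\pi$, so in particular $M$ is compact. Let $\pi\colon \widetilde M\to M$ be the universal covering space, equipped with the pullback length metric. Then $\widetilde M$ is an Alexandrov space without boundary, and because the Ricci curvature condition defined in the paper is \emph{purely local} (each point has a neighborhood on which the condition along every geodesic holds), the covering projection $\pi$ is a local isometry, hence $Ric(\widetilde M)\gs n-1$ as well. Applying the diameter estimate again now to $\widetilde M$ yields $\mathrm{diam}(\widetilde M)\ls\pi$, so $\widetilde M$ is compact.

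Next I would invoke the Bishop--Gromov volume comparison from Corollary A.3: for any $\tilde p\in\widetilde M$,
\begin{equation*}
\mathrm{vol}\bigl(B_{\tilde p}(r)\bigr)\ls \mathrm{vol}\bigl(B_{\mathbb{S}^n}(r)\bigr)\qquad\text{for all }r>0,
\end{equation*}
where the right side denotes the volume of a metric ball of radius $r$ in the standard unit sphere $\mathbb{S}^n$. Taking $r=\pi$ and using $\mathrm{diam}(\widetilde M)\ls\pi$ gives
\begin{equation*}
\mathrm{vol}(\widetilde M)\ls \omega_n.
\end{equation*}

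Finally, $\pi_1(M)$ acts freely, isometrically and properly discontinuously on the compact space $\widetilde M$ by deck transformations, with quotient $M$. Choosing a fundamental domain shows
\begin{equation*}
\mathrm{vol}(\widetilde M)=\mathrm{ord}\,\pi_1(M)\cdot\mathrm{vol}(M),
\end{equation*}
which together with the previous inequality yields $\mathrm{ord}\,\pi_1(M)\ls \omega_n/\mathrm{vol}(M)$. The simple-connectedness addendum follows because $\omega_n/\mathrm{vol}(M)<2$ forces $\mathrm{ord}\,\pi_1(M)=1$.

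The only substantive step is the first one: verifying that the lift $\widetilde M$ inherits the Ricci curvature lower bound in the sense of the paper. This is precisely the virtue of the purely local definition emphasized in Section 1, so the verification is essentially tautological once one checks that the second-variation-based Condition $(RC)$ passes through a local isometry --- and it does, since $\exp$, angles, and directions $\Lambda_{\gamma(t)}$ are all local invariants. Everything else reduces to the diameter bound and volume comparison already made available in the appendix.
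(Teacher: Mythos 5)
Your proof is correct and follows essentially the same route as the paper's: lift to the universal cover, use the purely local nature of the Ricci condition to transfer the bound, then apply Bishop--Gromov (Corollary A.3) together with the deck-transformation volume identity. You are slightly more careful than the paper's terse two-line argument in that you explicitly record that $\mathrm{diam}(\widetilde M)\ls\pi$ (via Bonnet--Myers applied upstairs) before invoking the volume comparison with $r=\pi$, which is the step the paper leaves implicit.
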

\begin{proof}
Let $\wt M$ be the universal covering of $M$. We have $Ric(\wt M)\gs n-1$. Therefore, by Bishop--Gromov volume comparison theorem (see Corollary A.3 in Appendix),  we get
$${\rm ord} \pi_1(M)\cdot vol(M)=vol(\wt M)\ls \omega_n.$$
This completes the proof.\end{proof}
\indent Now, we  are in position to prove Theorem 1.8. We rewrite it as following
\begin{thm}
 Let $M$ be  an $n$-dimensional Alexandrov space with $Ric(M)\gs n-1$ and $\partial M=\varnothing$. If $diam(M)=\pi$ , then $M$ is isometric to  suspension $[0,\pi]\times_{\sin}N,$ where $N$ is an Alexandrov space with curvature $\gs1.$
\end{thm}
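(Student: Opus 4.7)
The plan is to pin $M$ down as a spherical suspension by first finding the two poles, proving that every point lies on a minimizing geodesic joining them, and then upgrading Theorem~3.3 to a pointwise equality so that Lemmas~4.4 and~4.5 can be applied with $K=1$.

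\emph{Step 1 (diametral antipodes force $dist_p + dist_q \equiv \pi$).} By compactness pick $p, q \in M$ with $|pq| = diam(M) = \pi$, and set $w := \cos(dist_p) + \cos(dist_q)$. The distributional Laplacian comparison $\Delta dist_p \leq (n-1)\cot(dist_p)\cdot{\rm vol}$ from Section~3 (with $K=1$), combined with the Lipschitz chain rule applied to the smooth outer function $\cos$, yields $\Delta \cos(dist_p) + n\cos(dist_p) \geq 0$ and the analogous inequality for $q$, so $\Delta w + nw \geq 0$ as a signed measure. On the other hand $dist_p+dist_q \geq \pi$ by triangle inequality and $\cos$ is decreasing on $[0,\pi]$, so $w \leq 0$ pointwise, and hence $\Delta w \geq -nw \geq 0$. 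Testing against the constant $1$ through the canonical Dirichlet form (available since $M$ is compact and $\partial M=\varnothing$) forces $\int_M d\Delta w = 0$; so $\Delta w \equiv 0$, $\mathcal{E}(w,w)=0$, and $w$ is constant. Since $w = 0$ on the $pq$-geodesic, $w \equiv 0$. Thus $dist_p(x) + dist_q(x) = \pi$ for every $x\in M$, so every point lies on a geodesic from $p$ to $q$, and $Cut_p = \{q\}$, $Cut_q = \{p\}$.

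\emph{Step 2 (equality in the Laplacian comparison).} Fix $x\notin\{p,q\}$ and any $\theta\in\mathcal{S}$. Two applications of Theorem~3.3 along a common diagonal subsequence $\delta\subset\theta$ give $\Delta^\delta dist_p(x) \leq (n-1)\cot(dist_p(x))$ and $\Delta^\delta dist_q(x) \leq -(n-1)\cot(dist_p(x))$, summing to $\leq 0$. On the other hand, since $dist_p+dist_q\equiv\pi$, the two difference quotients defining $H^\delta_x dist_p(\xi)$ and $H^\delta_x dist_q(\xi)$ sum identically to zero for each $\xi\in\Lambda_x$; the superadditivity $\limsup A_s + \limsup B_s \geq \limsup(A_s+B_s)$ then gives $H^\delta_x dist_p(\xi) + H^\delta_x dist_q(\xi) \geq 0$, whence $\Delta^\delta dist_p(x) + \Delta^\delta dist_q(x) \geq 0$ after integrating. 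Both Laplacian bounds are therefore equalities at $\delta$. Using the subsequence monotonicity $H^{\delta'}_x \leq H^\delta_x$ noted after Definition~3.1, the same argument applied to an arbitrary $\delta'\subset\delta$ gives $\Delta^{\delta'} dist_p(x) = (n-1)\cot(dist_p(x))$ for every such $\delta'$, which is precisely the hypothesis of Lemmas~4.4 and~4.5 with $K=1$ on $W_p = M\setminus\{p,q\}$.

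\emph{Step 3 (constant comparison angle and the suspension isometry).} For $\xi,\eta\in\Sigma_p$, let $\sigma_\xi(t)=\exp_p(t\xi)$ and $\sigma_\eta(t)=\exp_p(t\eta)$ for $t\in[0,\pi]$, and set $\varphi(t,t') := \wa_1(p;\sigma_\xi(t),\sigma_\eta(t'))$. Lemma~4.5 applied at $p$ makes $\varphi$ non-increasing in $t$ and in $t'$. Applying Lemma~4.5 now at $q$ to the reversed geodesics $s\mapsto\sigma_\xi(\pi-s)$, $s'\mapsto\sigma_\eta(\pi-s')$ (which emanate from $q$, with arc-parameter equal to distance from $q$ by Step~1), and observing that $dist_p+dist_q\equiv\pi$ combined with the spherical law of cosines immediately gives $\wa_1(q;\sigma_\xi(\pi-s),\sigma_\eta(\pi-s')) = \wa_1(p;\sigma_\xi(\pi-s),\sigma_\eta(\pi-s'))$, the same lemma translates (via $t=\pi-s$, $t'=\pi-s'$) to $\varphi$ being non-decreasing in $t,t'$. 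Hence $\varphi$ is constant, and the limit $t,t'\to 0^+$ identifies the constant as the Alexandrov angle $|\xi\eta|_{\Sigma_p}$. Plugging back into the spherical law of cosines yields
\begin{equation*}
\cos\bigl(|\sigma_\xi(t)\,\sigma_\eta(t')|\bigr) = \cos t\cos t' + \sin t\sin t'\cos |\xi\eta|_{\Sigma_p},
\end{equation*}
which is the defining distance formula of $[0,\pi]\times_{\sin}\Sigma_p$. The map $F(t,\xi):=\exp_p(t\xi)$ is therefore a surjective, distance-preserving map $[0,\pi]\times\Sigma_p\to M$ collapsing each pole; it descends to an isometry $[0,\pi]\times_{\sin}\Sigma_p\cong M$, with $N:=\Sigma_p$ an Alexandrov space of curvature $\geq 1$ by the classical property of spaces of directions.

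I expect the main obstacle to lie in Step~1: rigorously promoting $\Delta dist_p \leq (n-1)\cot(dist_p)$ to $\Delta\cos(dist_p) + n\cos(dist_p) \geq 0$ via a Lipschitz chain rule, and then extracting $w\equiv 0$ via Dirichlet-form integration by parts, require careful bookkeeping in the Alexandrov setting (though the underlying idea is the standard ``$\int_M d\Delta w = 0$ forces $w$ harmonic on a compact boundaryless space''). A secondary subtlety is in Step~3: Lemma~4.5 applied only at $p$ yields the one-sided bound $\varphi\leq\angle_p(\xi,\eta)$, and the reverse inequality is recovered only through a second application at $q$, exploiting the diametral symmetry $dist_p+dist_q\equiv\pi$ to identify the two base-point comparison angles.
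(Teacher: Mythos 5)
Your proposal is correct in substance and reproduces the paper's argument in Steps~2 and~3, while Step~1 takes a genuinely different route. Here is the comparison.

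\textbf{Step 1.} The paper does not argue via a Bochner-type Laplacian identity. Instead it invokes the Bishop--Gromov volume comparison (available because the Appendix proves $Ric(M)\gs n-1$ implies $CD(n,n-1)$, hence Corollary~A.3) and then cites the standard Riemannian argument ([P], p.~271) to get $|px|+|qx|=\pi$ for all $x$, together with the consequences Fact~(i)--(iv). Your route --- multiply the pointwise Laplacian comparison by the outer chain-rule factor $-\sin(dist_p)$, add, and integrate against the constant test function on a compact space without boundary --- is morally the same maximum-principle computation, and you correctly flag where it is delicate in the Alexandrov setting. Indeed, the paper's Theorem~3.3 controls $\Delta^\delta f$ pointwise on $W_p$ only after passing to a subsequence depending on $x$; upgrading it to a measure inequality $\Delta\,dist_p\ls(n-1)\cot(dist_p)\cdot\mathrm{vol}$ requires (a) the identification of $\Delta^\delta f$ with the a.c.\ part of the canonical Laplacian on $Reg_f\cap W_p$ (which the paper states just before Lemma~3.2), (b) non-positivity of the singular part, and (c) a chain rule for $\Delta(\cos\circ dist_p)$ in the Dirichlet-form sense, plus care near $p$ and $q$ where $\sin(dist_p)$ vanishes and $\cot_1$ blows up. None of this is fatal --- the chain rule is available in the literature the paper cites --- but the paper's route through $CD(n,n-1)$ and Bishop--Gromov is cleaner and uses only machinery it actually develops. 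What your Step~1 buys instead is a direct proof of rigidity in $W_p$ without routing through the optimal-transport results of the Appendix.

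\textbf{Step 2.} This is essentially identical to the paper's. One small remark: from $\Delta^\delta f(x)=(n-1)\cot(|px|)$ and $\Delta^{\delta'}f(x)\ls\Delta^\delta f(x)$ you only get $\ls$, so to obtain equality for every $\delta'\subset\delta$ you must re-run the superadditivity argument at $\delta'$ (which you implicitly do; the paper phrases it via $H^{\delta'}_xf\gs-H^\delta_x\bar f$).

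\textbf{Step 3.} You go straight to $\Sigma_p$, while the paper first passes through the totally geodesic midpoint set $N=f^{-1}(\pi/2)$, proves the suspension formula over $N$, and then constructs the isometry $\Phi:N\to\Sigma_p$. The two are equivalent; yours is marginally more direct. Two points the paper makes explicit that you only implicitly rely on: Fact~(iii) ($|px|+|py|+|xy|<2\pi$ for nondegenerate triangles) is needed to keep the $K=1$ hypothesis $\tau+\tau'+|\sigma(\tau)\varsigma(\tau')|<2\pi$ of Lemma~4.5 in force over the full range, and Fact~(iv) (every direction at $p$ is realized by a length-$\pi$ geodesic) together with Fact~(ii) is what gives surjectivity and well-definedness of your map $F$. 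Also, uniqueness of the geodesic from $p$ in a given direction $\xi$ is not automatic a priori but follows a posteriori from the distance formula once you have the constant comparison angle. Finally, your conclusion that $\Sigma_p$ has curvature $\gs 1$ uses that $M$ has a local lower curvature bound near $p$ --- which is part of the paper's standing definition of ``Alexandrov space'' --- so this is fine.
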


\begin{proof} Takes two points $p,q\in M$ such that $|pq|=\pi.$

Exactly as in Riemannian manifold case, by using Bishop--Gromov volume comparison theorem, we have the following assertions: \\
\textbf{Fact:}\indent (i)  For any point $x\in M$, there holds $|px|+|qx|=\pi.$  This implies $W_p=W_q=M\backslash\{p,q\}.$\\
\indent  (ii) For any $x\in M$,  we can extend the geodesic $px$ to a geodesic from $p$ to $q$. We will denote it by $ pxq.$\\
\indent  (iii) For any non-degenerate triangle $\triangle pxy$, we have $|px|+|py|+|xy|<2\pi.$\\
\indent  (iv) For any direction $\xi\in \Sigma_p$, there exists a geodesic $\gamma_{\xi}$ such that $\gamma_\xi(0)=p, \ \gamma_\xi^+(0)=\xi$ and its length is equal to $\pi.$

 Indeed, the first assertion (i) is an immediate consequence of Bishop--Gromov volume comparison theorem (see, for example, page 271 in [P]). Gluing geodesics $px$ and $qx$, the result curve has length $=\pi=|pq|$. Thus it is a geodesic. This proves the second assertion (ii). The third assertion (iii) follows directly from triangle inequality $$|px|+|py|+|xy|<|px|+|py|+|qx|+|qy|\ls2\pi.$$
To show (iv), we consider a sequence of direction $\xi_i\in\Sigma_p$ such that $\xi_i\to\xi$ and there exists geodesics $\alpha_i$ with $\alpha_i(0)=p$ and $\alpha^+_i(0)=\xi_i$. From (ii), we can extend each $\alpha_i$ to a new geodesic with length $=\pi$, denoted by $\alpha_i$ again.
By Arzela--Ascoli Theorem, we can take a limit from some subsequence of $\alpha_i$. Clearly, the limit is the desired geodesic. This proves the last assertion (iv).

Let $f=dist_p$ and $\bar f=dist_q$. For any point $x\not=p,q$, we
set $\Lambda_x\subset \Sigma_x$ all of directions which are vertical
with the geodesic $pxq$.

Fix a sequence $\theta=\{\theta_j\}_{j=1}^\infty\in\mathcal S$. By Theorem 3.3, we can find a subsequence $\delta\subset\theta$ such that  \begin{equation}\Delta^\delta f(x)\ls (n-1)\cdot\cot(|px|)\quad {\rm and}\quad \Delta^\delta \bar f(x)\ls (n-1)\cdot\cot(|qx|).\end{equation}
 The above fact (i) implies $f+\bar f=\pi$. Thus \begin{equation}H^\delta_x \bar f(\xi)= -\liminf_{s\to0\ s\in\delta}\frac{ f\circ\exp_x(s\cdot\xi)- f(x)}{s^2/2}.\end{equation}
By Definition 3.1, we have  $H^{\delta'}_x f\gs-H^{\delta}_x\bar f$ for any subsequence $\delta'\subset \delta$.
Hence, by combining this with (5.1) and the definition of $\Delta^\delta f$, we get
$$\Delta^{\delta'} f(x)\gs -\Delta^{\delta} \bar f(x)\gs -(n-1)\cdot\cot(|qx|)=(n-1)\cdot\cot(|px|).$$
Note also that  $$\Delta^{\delta'} f(x)\ls \Delta^{\delta} f(x).$$
By combining this with (5.1), this implies that $$\Delta^{\delta'}
f(x)=(n-1)\cdot\cot(|px|)$$ for any subsequence $\delta'\subset \delta$.

>From Lemma 4.4, $-\cos f$ is $\cos f-$concave in
$B_p(\pi)\backslash\{p\}=W_p$. Given any geodesic
$\sigma(s):[0,L]\rightarrow W_p$ with $L<\pi$, we have
\begin{equation} (-\cos f\circ\sigma)''(s)\ls \cos
f\circ\sigma(s),\qquad \forall s\in(0,L).
\end{equation}
Similarly, $-\cos \bar f$ is $\cos \bar f-$concave in $W_q=W_p$ and
\begin{equation}
(-\cos \bar f\circ\sigma)''(s)\ls \cos \bar f\circ\sigma(s),\qquad
\forall s\in(0,L).
\end{equation}
Since $f+\bar f=\pi$, $\cos f=-\cos\bar f$, by combining this with
(5.3) and (5.4), we get \begin{equation} (-\cos f\circ\sigma)''(s)=
\cos f\circ\sigma(s),\qquad \forall s\in(0,L).
\end{equation}

Denote by $$M^+=\big\{x\in M:\ f(x)\ls \pi/2\big\},\qquad  M^-=\big\{x\in M:\ f(x)\gs \pi/2\big\}$$
and $N=M^+\cap M^-=\{x\in M:\ f(x)= \pi/2\}.$ Set $$v_x=({\rm geodesic}\  pxq)\cap N,$$
which is consisting of a single point.

We claim that $N$ is totally geodesic in $M$.

 Indeed, take any two points $v_1,v_2 \in N$ with $|v_1v_2|<\pi$. Let $\sigma(s)$ be a geodesic connected $v_1$ and $v_2$. By (5.5) and noting that $$\cos f(v_1)=\cos f(v_2)=0,$$ we have $\cos f\circ\sigma(s)\equiv0.$ This tells us $\sigma\subset N$ and $N$ is totally geodesic.

  Now we are ready to prove that $M$ is isometric to suspension $[0,\pi]\times_{\sin}N$. Consider any two points $x,y\in M\backslash\{p,q\}$.

  If $x,y\in M^+$,  we know from Lemma 4.5 that  \begin{equation} \wa_1 xpy\gs \wa_1v_xpv_y\qquad {\rm and}\qquad \wa_1 xqy\ls \wa_1v_xqv_y.\end{equation}
Note from Fact (i) that  $$\wa_1 xpy=\wa_1 xqy.$$
Thus we obtain \begin{equation}\wa_1 xpy=\wa_1v_xpv_y.\end{equation}

 Clearly, if $x,y\in M^-$, the same  argument also deduces the equality (5.7).

While  if $x\in M^+$ and   $y\in M^-$, by Lemma 4.5 again, we have
\begin{equation*} \wa_1 xpy\gs \wa_1v_xpy=\wa_1v_xpv_y\qquad {\rm and}\qquad \wa_1 xpy\ls \wa_1xpv_y=\wa_1v_xpv_y,\end{equation*}
which  implies the equality (5.7).

Then by applying the cosine law to the comparison triangle, we get $$\cos(|xy|)=\cos(|px|)\cdot\cos(|py|)+\sin(|px|)\cdot\sin(|py|)\cos\wa_1 v_xpv_y.$$
This proves that $M$ is isometric to  suspension $[0,\pi]\times_{\sin}N.$

It remains to show  that $N$ has curvature $\gs1.$

We define a map $\Phi: N\rightarrow \Sigma_p$ by $$\Phi(v)=\ua_p^v,\qquad \forall v\in N.$$
Since $N\subset W_p$ and $|pv|=\pi/2$ for all $v\in N$, $\Phi$ is well defined.

Given two points $v_1,v_2\in N$, for any $x_1\in M$ lies in geodesic $pv_1q$ and any $x_2\in M$ lies in geodesic $pv_2q$, the equality (5.7) implies $$\wa_1 x_1py_1=\wa_1 v_1pv_2=|v_1 v_2|.$$
Since $\angle v_1pv_2=\lim_{x_1\to p, x_2\to p}\wa_1 x_1py_1$, we have $$|\ua_p^{v_1}\ \ua_p^{v_2}|_{\Sigma_p}=|v_1 v_2|.$$
This shows that $\Phi$ is an  isometrical embedding. On the other hand, by Fact (iv), $\Phi$ is surjective. Therefore, $\Phi$ is an isometry. Thus $N$ has curvature $\gs1$. Therefore, we have completed the proof of the theorem.
\end{proof}

\begin{cor}
 Let $M$ be  an $n$-dimensional Alexandrov space with $Ric(M)\gs n-1$ and $\partial M=\varnothing$. If $rad(M)=\pi$, then $M$ is isometric to the sphere $\mathbb S^n$ with standard metric.
\end{cor}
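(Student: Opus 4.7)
The plan is to reduce the corollary to the maximal diameter theorem (Theorem 5.2) and then transfer the radius rigidity from $M$ to the cross-section, closing by induction on $n$.

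First I would observe that $rad(M)\leqslant diam(M)$, while $Ric(M)\geqslant n-1$ forces $diam(M)\leqslant\pi$ via the Bonnet--Myers bound recalled at the beginning of Section 5 (a consequence of $Ric\geqslant n-1\Rightarrow CD(n,n-1)$). Thus $diam(M)=\pi$, and Theorem 5.2 realizes $M$ as a spherical suspension $[0,\pi]\times_{\sin}N$, where $N$ is an $(n-1)$-dimensional Alexandrov space with $curv(N)\geqslant 1$.

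Next, the key step is to propagate $rad=\pi$ from $M$ to $N$. Fix any $v\in N$ and consider the equatorial point $x=(\pi/2,v)\in M$. By hypothesis there is some $y=(\theta',v')\in M$ with $|xy|_M=\pi$. The suspension distance formula at the end of the proof of Theorem 5.2 gives
\[
\cos|xy|_M=\cos(\pi/2)\cos\theta'+\sin(\pi/2)\sin\theta'\cos|vv'|_N=\sin\theta'\cos|vv'|_N,
\]
and the condition $\cos|xy|_M=-1$ together with $\theta'\in[0,\pi]$ forces $\theta'=\pi/2$ and $|vv'|_N=\pi$. Consequently every point of $N$ admits an antipode in $N$, i.e.\ $rad(N)\geqslant\pi$. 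Since $curv(N)\geqslant 1$ gives $diam(N)\leqslant\pi$ by Toponogov's comparison, we conclude $rad(N)=\pi$.

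Finally, I would close by induction on $n$. For the base case $n=1$, an Alexandrov space without boundary of dimension one is either a line or a circle, and $rad=\pi$ singles out the standard $\mathbb{S}^1$ of circumference $2\pi$. For the inductive step, $N$ is an $(n-1)$-dimensional Alexandrov space with $curv(N)\geqslant 1$ (hence $Ric(N)\geqslant n-2$ by Remark 1.6(ii)) and $rad(N)=\pi$; the inductive hypothesis yields $N\cong\mathbb{S}^{n-1}$, so $M\cong[0,\pi]\times_{\sin}\mathbb{S}^{n-1}=\mathbb{S}^n$ with the standard metric. (Alternatively, one may cite the Grove--Petersen radius sphere theorem for Alexandrov spaces with $curv\geqslant 1$ and apply it directly to $N$.) The only nontrivial step is the transfer one: although the computation in the suspension is elementary, one must exploit the fact that $\cos|xy|_M=-1$ has an essentially unique solution, pinning the antipode of every equator point back to the equator and thereby upgrading the metric condition on $M$ to a structural condition on the base $N$ that is strong enough to feed the induction.
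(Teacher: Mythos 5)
Your proposal is correct, but it takes a genuinely different route from the paper's. The paper's proof is a two-line bootstrap: since $rad(M)=\pi$ gives each point $p$ an antipode $q$, the key concavity estimate (5.5) from the proof of Theorem 5.2 holds for \emph{every} base point $p$, which is precisely the Alexandrov characterization of $curv(M)\gs 1$; the paper then cites the radius sphere theorem (Lemma 10.9.10 in [BBI]) directly for $M$. You instead apply the \emph{statement} of Theorem 5.2 to get the suspension $M\cong[0,\pi]\times_{\sin}N$, then push the radius condition down to $N$ by the explicit suspension distance formula (correctly noting $\sin\theta'\cos|vv'|_N=-1$ forces $\theta'=\pi/2$ and $|vv'|_N=\pi$), and finally close by induction or by applying the radius theorem to $N$, which has $curv\gs 1$. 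Both arguments ultimately rest on the same radius sphere theorem, but the paper upgrades the curvature bound on $M$ itself, whereas you propagate the radius hypothesis downward to the cross-section where the sectional bound is already available. The paper's version is shorter and avoids both the suspension computation and the induction; your version has the minor virtue of making the structure of $N$ explicit, and the minor overhead of needing to check $\partial N=\varnothing$ before invoking the inductive hypothesis (this does hold, since a suspension over a space with boundary has boundary, but it deserves a word). Either way, the corollary is proved.
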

\begin{proof} For any point $p\in M$, there exists a point $q$ such that $|pq|=\pi$. From the proof of theorem 5.2, we have that $-\cos dist_p$ is $\ \cos dist_p-$concave in $B_p(\pi)\backslash\{p\}$. Thus $M$ has curvature $\gs1$. It is well-known (see,for example, Lemma 10.9.10 in [BBI]) that  an $n$-dimensional Alexandrov space with curvature $\gs1$ and $rad=\pi$ must be isometric to  the sphere $\mathbb S^n$ with standard metric.
\end{proof}
 \begin{rem}Colding in [C] had proved the corollary  for  limit spaces of   Riemannian manifolds. That is, if  $M_i$ is a sequence of $m-$dimensional Riemannian manifolds with $ Ric_{M_i}\ge m-1$ and converging to a metric space $X$ with $rad_X=\pi$, then $X$ is isometric to  the sphere $\mathbb S^{m'}$ with standard metric for some integer $m'\ls m.$\end{rem}

\section{Splitting theorem}
In this section,  $M$ will always denote an $n$-dimensional Alexandrov space with curvature bounded below locally, $Ric(M)\gs0$ and $\partial M=\varnothing.$ The main purpose of this section is to  prove Theorem 1.7.

 A curve $\gamma:[0,+\infty)\rightarrow M$ is called a \emph{ ray} if $|\gamma(s)\gamma(t)|=s-t$ for any $0\ls t<s<+\infty.$
A curve $\gamma:(-\infty,+\infty)\rightarrow M$ is called a \emph{ line} if $|\gamma(s)\gamma(t)|=s-t$ for any $-\infty<t<s<+\infty.$ For a line $\gamma$, obviously, $\gamma|_{[0,+\infty)}$ and $\gamma|_{(-\infty,0]}$ form two rays.

Given a ray $\gamma(t)$, we define the \emph{Busemann function} $b_{\gamma}$ for $\gamma$ on $M$ by $$b_{\gamma}(x)=\lim_{t\to+\infty}\big(t-|x\gamma(t)|\big).$$
Clearly, it is well-defined and is a 1-Lipschitz function.

>From now on, in this section, we fix a line $\gamma(t)$ in $M$ and set $\gamma_+=\gamma|_{[0,+\infty)}$, $\gamma_-=\gamma|_{(-\infty,0]}$. Let $b_+$ and $b_-$ be the Busemann functions for rays $\gamma_+$ and $\gamma_-$, respectively.

Let us recall what is the proof of the splitting theorem in the smooth case. When $M$ is a smooth Riemannian manifold, Cheeger--Gromoll in [CG] used the standard Laplacian comparison and the maximum principle to conclude that $b_+$ and $b_-$ are harmonic on $M$.  Then the elliptic regularity theory implies that they are smooth. The important step is to use Bochner formula to show that both $\nabla b_+$ and $\nabla b_-$ are parallel. Consequently, the splitting theorem follows directly from de Rham decomposition theorem. In [EH], Eschenburg--Heintze gave a proof avoiding the elliptic regularity; while the Bochner formula is essentially used. But for the general Alexandrov spaces case, the main difficulty is the lack of Bochner formula.

We begin with a lemma which was proved by Kuwae and Shioya for Alexandrov spaces with $MCP(n,0)$ and hence for Alexandrov spaces with nonnegative Ricci curvature.
(See lemma 6.5 and the proof of theorem 1.3 in [KS1]).\begin{lem}$b_+(x)+b_-(x)\equiv0$, on $M$. \end{lem}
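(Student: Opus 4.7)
The upper bound $b_+(x)+b_-(x)\leq 0$ on $M$ is immediate from the triangle inequality: for every $t>0$,
\begin{equation*}
\bigl(t-|x\gamma(t)|\bigr)+\bigl(t-|x\gamma(-t)|\bigr)\leq 2t-|\gamma(-t)\gamma(t)|=0,
\end{equation*}
with equality along $\gamma$ itself. The real content of the lemma is therefore the reverse inequality on all of $M$, and I would obtain it via a maximum-principle argument modelled on Cheeger--Gromoll but driven by the Laplacian comparison of Theorem~3.3.

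The key step is to show that each Busemann function $b_\pm$ is weakly subharmonic on $M$. Write $b_+$ as the pointwise limit (increasing in $t$, by triangle inequality) of $b_+^t:=t-dist_{\gamma(t)}$, and similarly $b_-^t:=t-dist_{\gamma(-t)}$. By Theorem~3.3 applied with $K=0$, for each $x\in W_{\gamma(t)}$ and each $\theta\in\mathcal{S}$ there is a subsequence $\delta$ with
\begin{equation*}
\Delta^\delta dist_{\gamma(t)}(x)\leq \frac{n-1}{|x\gamma(t)|},
\end{equation*}
so that $\Delta^\delta b_+^t(x)\gs -(n-1)/|x\gamma(t)|$. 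Since $\Delta^\delta$ is a representation of the absolutely-continuous part of the canonical Laplacian of [KMS], and the right-hand side tends to $0$ uniformly on compacta as $t\to\infty$, the limit yields $\Delta b_+\gs 0$ and $\Delta b_-\gs 0$ as signed Radon measures on $M$. Hence $u:=b_++b_-$ is subharmonic, nonpositive, and attains its maximum value $0$ at every point of $\gamma$. The strong maximum principle for the canonical Dirichlet form (available in the Alexandrov setting once Bishop--Gromov is in force, which is Corollary~A.3 of the appendix) then forces $u\equiv 0$.

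A shorter route, which the authors themselves allude to, is to observe that by Remark~1.6(iii) the hypothesis $Ric(M)\gs 0$ implies the curvature-dimension condition $CD(n,0)$, and since any Alexandrov space is non-branching this in turn implies $MCP(n,0)$ by Sturm's theorem. The lemma then follows at once from [KS1, Lemma~6.5], which is stated precisely under $MCP(n,0)$.

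The main obstacle in the direct route is the passage from the subsequence-dependent pointwise bound on $\Delta^\delta dist_{\gamma(t)}$ to a genuine distributional upper bound on $\Delta\, dist_{\gamma(t)}$ that survives the limit $t\to\infty$, together with invoking a strong maximum principle in this singular setting. The shortcut via $CD(n,0)\Rightarrow MCP(n,0)$ and [KS1] bypasses all of these analytic subtleties; once it is granted, the geometric content of the lemma is exactly the classical one.
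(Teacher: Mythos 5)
The paper's treatment of Lemma 6.1 is exactly your ``shorter route'': the authors simply cite [KS1, Lemma 6.5], which is proved under $MCP(n,0)$, together with the chain $Ric(M)\gs 0 \Rightarrow CD(n,0) \Rightarrow MCP(n,0)$ (Proposition A.2 plus Sturm's theorem for non-branching spaces). So your shortcut is the paper's proof, verbatim, and you correctly located it. Two comments on your primary maximum-principle route, which is the classical Cheeger--Gromoll idea transplanted to this singular setting. First, the subsequence-dependence of $\Delta^\delta$ that you flag as an obstacle is not actually one: on the full-measure set $Reg_f \cap W_p$ the value $\Delta^\theta f(x)$ equals $(\Delta f)^{ac}(x)$ for every $\theta\in\mathcal S$, and Petrunin's result that the singular part of $\Delta\,dist_{\gamma(t)}$ is non-positive then upgrades the pointwise a.e.\ comparison of Theorem 3.3 to a genuine distributional bound $\Delta\, dist_{\gamma(t)} \ls \big((n-1)/|x\gamma(t)|\big)\cdot vol$. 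The real gap is the strong maximum principle: ``available once Bishop--Gromov is in force'' is not a justification, since volume comparison alone does not produce a Harnack inequality for the canonical Dirichlet form. One needs a local Poincar\'e inequality together with volume doubling to run Moser iteration, and those in turn are derived from $MCP(n,0)$ --- which is precisely the machinery [KS1] develops. So the direct route does not bypass [KS1]; it reproduces its analytic core, whereas the citation route reaches the conclusion in one line.
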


\begin{lem}For any point  $x\in M$, there exists a unique line $\gamma_x$ such that $x=\gamma_x(0)$ and $b_+\circ\gamma_x$ is a linear function with $(b_+\circ\gamma_x)'=1$.\end{lem}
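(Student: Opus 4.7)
My plan is to construct $\gamma_x$ by concatenating an asymptotic ray from $x$ toward $\gamma_+$ with one toward $\gamma_-$, then to leverage Lemma 6.1 ($b_++b_-\equiv 0$) so that both the line property and the linearity of $b_+$ along $\gamma_x$ come for free. Uniqueness will rest on the non-branching property of Alexandrov spaces, bypassing the smooth-case route through harmonicity and elliptic regularity of $b_+$.

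\textbf{Existence.} I would pick $t_i\to+\infty$ and let $\sigma_i:[0,|x\gamma(t_i)|]\to M$ be unit-speed shortest curves from $x$ to $\gamma(t_i)$. Local compactness of $M$ combined with an Arzel\`a--Ascoli / diagonal argument extracts a subsequential limit $\sigma_+:[0,+\infty)\to M$ that is a ray with $\sigma_+(0)=x$. Because $t\mapsto t-|z\gamma(t)|$ is non-decreasing and $|\sigma_i(s)\gamma(t_i)|=|x\gamma(t_i)|-s$, one obtains $b_+(\sigma_+(s))\gs s+b_+(x)$; combined with the 1-Lipschitz bound $b_+(\sigma_+(s))\ls b_+(x)+s$ this forces equality. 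Repeating the construction toward $\gamma_-$ and invoking Lemma 6.1 yields an asymptotic ray $\sigma_-$ with $b_+(\sigma_-(s))=b_+(x)-s$. Setting
\[
\gamma_x(s)=\begin{cases}\sigma_+(s), & s\gs 0,\\ \sigma_-(-s), & s\ls 0,\end{cases}
\]
the linearity $b_+\circ\gamma_x(s)=b_+(x)+s$ is built in. The only nontrivial case in checking that $\gamma_x$ is a line is $s_1<0<s_2$: triangle inequality through $x$ gives $|\gamma_x(s_1)\gamma_x(s_2)|\ls s_2-s_1$, while the 1-Lipschitz property of $b_+$ gives the matching lower bound $|\gamma_x(s_1)\gamma_x(s_2)|\gs b_+(\gamma_x(s_2))-b_+(\gamma_x(s_1))=s_2-s_1$.

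\textbf{Uniqueness.} Suppose $\gamma_x^1,\gamma_x^2$ both satisfy the conclusion. For each $s>0$, triangle inequality through $x$ gives $|\gamma_x^1(s)\gamma_x^2(-s)|\ls 2s$, while linearity of $b_+$ along the $\gamma_x^i$ combined with the 1-Lipschitz property of $b_+$ forces $|\gamma_x^1(s)\gamma_x^2(-s)|\gs 2s$. Thus $|\gamma_x^1(s)\gamma_x^2(-s)|=2s$ while $|x\gamma_x^1(s)|=|x\gamma_x^2(-s)|=s$, so $x$ is a midpoint of the pair $(\gamma_x^2(-s),\gamma_x^1(s))$. On the other hand $x$ is also the midpoint of $(\gamma_x^2(-s),\gamma_x^2(s))$ along $\gamma_x^2$ itself. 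The non-branching property of Alexandrov spaces recalled in the footnote of Section~1 then forces $\gamma_x^1(s)=\gamma_x^2(s)$; the symmetric argument handles $s<0$.

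\textbf{Main obstacle.} The technical heart is the existence step: verifying that the limiting asymptotic ray actually makes $b_+$ grow at unit speed, which rests on the monotonicity formula for Busemann functions and a careful use of 1-Lipschitz together with Lemma 6.1. The uniqueness, which in the smooth Riemannian case is the hard part (it needs $b_+\in C^1$ via harmonicity and elliptic regularity), collapses in the Alexandrov setting to a single invocation of non-branching once the candidate $\gamma_x$ is characterized by the linearity of $b_+$. In particular, neither the Laplacian comparison Theorem~3.3 nor the gradient-flow machinery is needed at this step; those will presumably enter later when $\gamma_x$ is combined across different $x$ to produce the product splitting.
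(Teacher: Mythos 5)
Your proof is correct and follows essentially the same route as the paper: extract asymptotic rays toward $\gamma_\pm$ via Arzel\`a--Ascoli, use Lemma~6.1 to turn the $b_-$-linearity of the backward ray into $b_+$-linearity, verify the line property by combining the triangle inequality with the 1-Lipschitz bound on $b_+$, and deduce uniqueness from non-branching. Your formulation of uniqueness directly in the midpoint language of the non-branching footnote is a slightly cleaner phrasing than the paper's (which observes that the concatenation $\gamma_1([-1,0])\cup\gamma_2([0,1])$ is a geodesic of length $2$), but the underlying argument is identical.
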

\begin{proof}\emph{Existence.} If $x\in \gamma$, then we can write $x=\gamma(t_0)$. Hence we set $\gamma_x(t)=\gamma(t+t_0),$ which is a desired line.

We then consider the case  $x\not\in\gamma$. Let $\sigma_{t,+}(s)$
be a geodesic from $x$ to $\gamma_+(t)$. By using Arzela--Ascoli
Theorem, we can take   a sequence  $t_j\to+\infty$ such that
$\sigma_{t_j,+}$ converges to  a limit curve $
\sigma_{\infty,+}(s):[0,+\infty)\rightarrow M$. It is easy to check
( see, for example,  page 286 in [P]) that $\sigma_{\infty,+}$ is
1-Lipschitz and
$$b_+\circ\sigma_{\infty,+}(s)=s+b_+\circ\sigma_{\infty,+}(0)=s+b_+(x),\quad
{\rm for\ all} \ s\gs0.$$ By a similar construction, we can obtain a
1-Lipschitz curve $\sigma_{\infty,-}(s'):(-\infty,0]\rightarrow M$
such that $\sigma_{\infty,-}(0)=x$ and
$$b_-\circ\sigma_{\infty,-}(s')=-s'+b_-(x), \quad {\rm for\ all} \ s'\ls0.$$ Let $\sigma_\infty=\sigma_{\infty,+}\cup \sigma_{\infty,-}: (-\infty,+\infty)\rightarrow M$. This is a 1-Lipschitz curve. By Lemma 6.1, we have\begin{equation}b_+\circ\sigma_\infty(s)=s+b_+(x),\quad {\rm for\ all} \ s\in(-\infty,+\infty).\end{equation}

 Then for any $-\infty<t<s<\infty$, by (6.1), we get $$s-t=b_+\circ\sigma_\infty(s)-b_+\circ\sigma_\infty(t)\ls |\sigma_\infty(s)\ \sigma_\infty(t)|\ls s-t.$$
Thus  $\sigma_\infty$ is a line. The equation (6.1) shows that it is a desired line.\\

\emph{Uniqueness.} Argue by contradiction. Suppose that there exist two such lines $\gamma_1,\ \gamma_2$.

The equations $(b_+\circ\gamma_1)'=(b_+\circ\gamma_2)'=1$ implies
$$b_+\circ\gamma_1(-1)=b_+(x)-1\quad {\rm and}\quad b_+\circ\gamma_2(1)=b_+(x)+1$$
Hence $$b_+\circ\gamma_2(1)-b_+\circ\gamma_1(-1)=2.$$ Since $b_+$ is
1-Lipschitz, we get
\begin{equation}|\gamma_1(-1)\ \gamma_2(1)|\gs b_+\circ\gamma_2(1)-b_+\circ\gamma_1(-1)=2.\end{equation}
On the other hand,  $$Length\big(\gamma_1([-1,0])\cup\gamma_2([0,1])\big)=2.$$ Thus $\gamma_1([-1,0])\cup\gamma_2([0,1])$ is a geodesic. This contradicts to that $M$ is non-branching.
The proof of the lemma is completed.
\end{proof}

For any point $x\in M$, we take the line $\gamma_x$ in Lemma 6.2. Let \begin{equation*}\begin{split}L_{x}&=\{\xi\in T_{x}\ |\ \angle(\xi,\gamma_x^+(0))=\angle(\xi,\gamma_x^-(0))=\pi/2\},\\
\Lambda_{x}&=\{\xi\in \Sigma_{x}\ |\ \angle(\xi,\gamma_x^+(0))=\angle(\xi,\gamma_x^-(0))=\pi/2\}. \end{split}\end{equation*}
Given a sequence $\theta:=\{\theta_j\}\in\mathcal S$, we define a function $H^\theta_xb_+:\Lambda_x\rightarrow\mathbb{R}$ by
$$H^\theta_xb_+(\xi)\overset{def}{=}\limsup_{s\to0, \ s\in \theta}\frac{b_+\circ\exp_x(s\cdot\xi)-b_+(x)}{s^2/2};$$
and  $$\Delta^\theta b_+(x)\overset{def}{=}(n-1)\cdot\oint_{\Lambda_x} H^\theta_xb_+(\xi).$$

 In the following Lemma 6.3, we will prove that both $b_+$ and $b_-$ are semi-concave. Thus, by lemma 6.1, $H^\theta_xb_+$ is well defined and is locally bounded. It is easy to see that $H^\theta_xb_+$ is measurable, so $\Delta^\theta b_+(x)$ is also well defined.

\begin{lem}$b_+(x)$ is a semi-concave function in $M$. Moreover, for any point $x\in M$ and any  sequence $\theta=\{\theta_j\}\in\mathcal S$, there exists a subsequence $\delta\subset \theta$ such that $\Delta^\delta b_+(x)\ls0.$\end{lem}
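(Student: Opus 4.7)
The plan is to reduce both claims about $b_+$ to properties of ordinary distance functions: Lemma 6.1 lets me rewrite $b_+$ as an infimum of distance-type functions (for semi-concavity), while Lemma 6.2 supplies, at each base point $x$, a line through $x$ that yields a \emph{tangential} upper barrier for $b_+$ at $x$ (for the Laplacian bound).

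For semi-concavity, by Lemma 6.1 I have $b_+=-b_-$, and the triangle inequality implies $t-f_{\gamma(-t)}(y)$ is monotone non-decreasing in $t$, so
\[b_+(y)=\inf_{t>0}\bigl(f_{\gamma(-t)}(y)-t\bigr).\]
Fix $x_0\in M$ and a neighborhood $U$ of curvature $\gs k_0$. Each $f_{\gamma(-t)}$ is $\cot_{k_0}(f_{\gamma(-t)})$-concave on $U$, and for $y\in U$ and $t$ sufficiently large, $f_{\gamma(-t)}(y)$ is uniformly large, so the concavity constant is bounded by some $\lambda_0$ independent of $t$ and $y\in U$. Since the pointwise infimum of uniformly $\lambda_0$-concave functions is again $\lambda_0$-concave, $b_+|_U$ is $\lambda_0$-concave, giving semi-concavity.

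Next, fix $x\in M$ and $\theta\in\mathcal S$, and take the line $\gamma_x$ from Lemma 6.2, so that $\gamma_x(0)=x$ and $b_+\circ\gamma_x(s)=s+b_+(x)$. For each $t>0$ set $x_t:=\gamma_x(-t)$, so $b_+(x_t)=b_+(x)-t$ and $|xx_t|=t$. The 1-Lipschitz property of $b_+$ gives $b_+(y)\ls f_{x_t}(y)-t+b_+(x)$ for every $y\in M$, with equality at $y=x$. Hence $f_{x_t}-t+b_+(x)$ is a tangential upper barrier for $b_+$ at $x$, which yields the pointwise comparison
\[H^\theta_x b_+(\xi)\ls H^\theta_x f_{x_t}(\xi)\qquad\text{for every }\xi\in\Sigma_x.\]
The set $\Lambda_x$ in the definition of $\Delta^\theta b_+(x)$ (perpendicular to $\gamma_x$ at $x$) coincides with the $\Lambda_x$ attached to $f_{x_t}$ in Section 3, because $x_t$ lies on $\gamma_x$. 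Averaging over $\Lambda_x$ therefore gives $\Delta^\theta b_+(x)\ls\Delta^\theta f_{x_t}(x)$ for every $t>0$.

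Finally I would apply Theorem 3.3 along a diagonal: pick $t_m=m$, and recursively extract $\delta^{(m)}\subset\delta^{(m-1)}\subset\cdots\subset\theta$ so that $\Delta^{\delta^{(m)}}f_{x_{t_m}}(x)\ls(n-1)/t_m$. Let $\delta$ be the diagonal subsequence. By the monotonicity $\theta_1\subset\theta_2\Rightarrow\Delta^{\theta_1}\ls\Delta^{\theta_2}$ noted just before Lemma 3.2, $\Delta^\delta f_{x_{t_m}}(x)\ls(n-1)/t_m$ for every $m$, whence $\Delta^\delta b_+(x)\ls(n-1)/t_m\to 0$. The main technical difficulty is precisely the step of securing a tangential barrier: the naive candidate $f_{\gamma(-t)}-t$ touches $b_+$ at $x$ only in the limit $t\to\infty$, which is too weak for a pointwise Hessian comparison. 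Lemma 6.2 is exactly what replaces the original line $\gamma$ by a line through $x$ along which $b_+$ is affine, producing distance functions $f_{x_t}$ whose translates are tangent to $b_+$ at $x$ and whose $\Lambda_x$ coincides with the one used to define $\Delta^\theta b_+(x)$.
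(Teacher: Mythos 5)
Your proof follows the same plan as the paper's: use the line $\gamma_x$ from Lemma 6.2, take points $x_t=\gamma_x(-t)$ far down it, observe that $dist_{x_t}+b_+(x_t)$ is an upper barrier for $b_+$ touching at $x$, apply Theorem 3.3 with $K=0$ to each $dist_{x_t}$ to get $\Delta^{\delta^{(m)}}b_+(x)\ls(n-1)/t_m$, and diagonalize. That part of your argument is essentially verbatim the paper's.

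For semi-concavity you take a slightly different route from the paper. The paper constructs, at each $x$, one supporting barrier $dist_p(\cdot)+b_+(p)$ using a fixed point $p$ on $\gamma_x$; since $p$ is fixed and the geodesic $px$ is compact, a lower curvature bound on a neighborhood of $px$ yields a concavity constant with no uniformity issue. You instead write $b_+=\inf_{t>0}\bigl(f_{\gamma(-t)}(\cdot)-t\bigr)$ along the original line $\gamma$ and appeal to the fact that an infimum of $\lambda_0$-concave functions is $\lambda_0$-concave. This is cleaner in that it gives $\lambda_0$-concavity on all of $U$ at once, but it quietly requires the concavity constant of $f_{\gamma(-t)}$ on $U$ to be bounded uniformly as $t\to\infty$. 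Since $M$ is only assumed to have curvature bounded below \emph{locally}, the relevant curvature bound is along a neighborhood of the geodesics from $\gamma(-t)$ to $U$, which leave every compact set as $t\to\infty$; so the uniform $\lambda_0$ you invoke does not follow merely from a curvature bound $\gs k_0$ on $U$ itself, and you would either need an extra argument (for example, restricting to a single large $t$ and producing pointwise barriers as in the paper, or arguing that $\cot_{k}$ at large distances is bounded by a constant independent of the relevant $k$) or simply switch to the paper's pointwise-barrier version. This is a technical point, not a conceptual one: with the fix, the two proofs are the same.
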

\begin{proof} Fix a point $x\in M$, we will construct a semi-concave support function for $b_+$ near $x$.

We take the line $\gamma_{x}$ in Lemma 6.2 and choose a point $p\in\gamma_x$ such that $b_+(p)\ll b_+(x).$

The equation $(b_+\circ \gamma_x)'=1$ implies \begin{equation} b_+(x)-b_+(p)=|px|.\end{equation}
On the other hand, since $b_+$ is 1-Lipschitz, we have \begin{equation}b_+(y)-b_+(p)\ls |py|\end{equation}for any $y\in M$.
By combining (6.3) and (6.4), we know that function $dist_p(\cdot)+b_+(p)$ supports $b_+$ near $x$.

This tells us $b_+$ is a semi-concave function. Furthermore, from Theorem 3.3, we can find a subsequence $\wt \delta\subset\theta$ such that $\Delta^{\wt \delta}b_+(x)\ls (n-1)/|px|.$ By letting $|px|\to\infty$ and a diagonal argument,  we can choose a subsequence $\delta\subset\wt \delta$ such that $\Delta^\delta b_+(x)\ls0.$ Therefore the proof of the lemma is completed.
\end{proof}

The following lemma is similar to Lemma 4.4.
\begin{lem}
Assume that for each point $x\in M$, there exists a sequence $\theta:=
 \{\theta_j\}\in \mathcal S$ such that $\Delta^{\theta'}b_+(x)=0$ for any subsequence $\theta'\subset \theta$.
   Then $b_+$ is a concave function  in $M$.
\end{lem}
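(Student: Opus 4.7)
My plan is to transplant the scheme of the proof of Lemma 4.4 to the Busemann function $b_+$, replacing $f = dist_p$ by $b_+$ and setting $K=0$ (so $\cot_K(|px|)$ becomes $0$). The first step is a rigidity statement analogous to Lemma 4.1: for each $x \in M$, the hypothesis produces a subsequence $\delta \subset \theta$ such that $H^{\delta}_x b_+(\xi) = 0$ for almost every $\xi \in \Lambda_x$. The crucial observation is that Lemma 3.2 applies verbatim along the line $\gamma_x$ from Lemma 6.2, because $b_+\circ\gamma_x$ is linear with slope $1$, so $b_+(z)-b_+(y) = |yz|$ for $y,z \in \gamma_x$ with $y$ preceding $z$; this is the analog of the identity $f(z)-f(y) = |yz|$ used in (3.6). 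Picking a nearby $z \in \gamma_x$ with $b_+(z) < b_+(x)$, invoking Condition $(RC)$ along $\gamma_x$ for an isometry $T: \Lambda_x \to \Lambda_z$, and invoking Lemma 6.3 at $z$ so that $\Delta^{\delta} b_+(z) \ls 0$ after a further refinement, the $L^1$ and $L^2$ averaging estimates in the proof of Lemma 4.1 go through with $\cot_K$ replaced by $0$. They produce, for any $\epsilon > 0$, a function $h$ on $\Lambda_x$ with $H^{\delta}_x b_+ \ls h$, $\oint_{\Lambda_x} h \ls \epsilon$, and $\oint_{\Lambda_x} h^2 \ls 3\epsilon$. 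Combined with the hypothesis $\oint_{\Lambda_x} H^{\delta}_x b_+ = \Delta^{\delta}b_+(x)/(n-1) = 0$, a diagonal argument as $\epsilon \to 0$ yields $H^{\delta}_x b_+ = 0$ almost everywhere on $\Lambda_x$.

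With the pointwise rigidity in hand, I imitate the rest of Lemma 4.4. Define
\begin{equation*}
u(z) := \sup_{\xi \in \Sigma_z} \bigl|\overline{Hess}_z b_+(\xi, \xi)\bigr|.
\end{equation*}
At each regular point $z \in Reg_{b_+}$, Perelman's Hessian is a symmetric bilinear form on $T_z = L_z \oplus \R\,\gamma_z^+(0)$; it vanishes on the line direction because $b_+\circ\gamma_z$ is linear, and by the previous step it vanishes on $\Lambda_z$ (at a regular point, $H^{\delta}_z b_+(\xi) = Hess_z b_+(\xi,\xi)$ is independent of $\delta$, and a.e.\ vanishing of a continuous quadratic form forces everywhere vanishing). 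Hence $u(z) = 0$, and since $Reg_{b_+}$ has full measure we obtain $u \equiv 0$ almost everywhere in $M$. For any two points $x,y \in M$ joined by a unique geodesic inside a neighborhood of curvature $\gs k_0$, the segment inequality (Proposition 4.2) applied with $A_1 = B_x(\epsilon)$, $A_2 = B_y(\epsilon)$ and integrand $u$ yields endpoints $x_\epsilon \in B_x(\epsilon)$, $y_\epsilon \in B_y(\epsilon)$ and a geodesic $\gamma_\epsilon$ from $x_\epsilon$ to $y_\epsilon$ along which $u = 0$ almost everywhere. Using Lemma 6.1 together with the symmetric counterpart of Lemma 6.3 applied to $b_-$, the function $-b_+ = b_-$ is also semi-concave, so $b_+ \circ \gamma_\epsilon$ is $C^1$. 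At a.e.\ $s_0$ on $\gamma_\epsilon$, the two-sided analog of the inequalities (4.19) with $F(s_0) = 0$ then forces $(b_+ \circ \gamma_\epsilon)''(s_0) \ls 0$.

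Semi-concavity of $b_+$ supplies the complementary bound $(b_+ \circ \gamma_\epsilon)'' < +\infty$ pointwise, so item 1.3(3) of [PP] promotes the almost-everywhere inequality to $(b_+ \circ \gamma_\epsilon)'' \ls 0$ on all of $(0,|x_\epsilon y_\epsilon|)$. Letting $\epsilon \to 0$, the geodesics $\gamma_\epsilon$ converge uniformly to the unique geodesic $\gamma$ from $x$ to $y$ and $b_+\circ\gamma_\epsilon \to b_+\circ\gamma$ uniformly, so item 1.3(4) of [PP] yields $(b_+\circ\gamma)'' \ls 0$. Since pairs with unique connecting geodesic are dense in $M\times M$ and $b_+$ is continuous, concavity along every geodesic follows. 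The main obstacle is the rigidity step, where three independent subsequence extractions must be consistently layered: the original $\theta$ at $x$ witnessing the hypothesis, the Condition $(RC)$ refinement with its isometry $T:\Lambda_x\to\Lambda_z$, and the further refinement giving $\Delta^{\delta}b_+(z) \ls 0$ from Lemma 6.3. Making the $L^2$ estimate survive all three selections and force pointwise vanishing is the technical heart of the argument, and this is precisely where the robust form of the hypothesis --- equality along \emph{every} subsequence of $\theta$ --- is essential.
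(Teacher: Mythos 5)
Your proposal is correct and takes essentially the same approach as the paper. The paper's proof of Lemma 6.4 is a terse three-step outline (Step 1: adapt Lemma 3.2 to $b_+$ using linearity along $\gamma_x$; Step 2: repeat Lemma 4.1 with Theorem 3.3 replaced by Lemma 6.3 to get $H_x^\delta b_+=0$ a.e.\ on $\Lambda_x$; Step 3: repeat Lemma 4.4 via the segment inequality), and your writeup fleshes out exactly that program, including the correct specializations $\cot_K \rightsquigarrow 0$, the $C^1$ argument via semi-concavity of both $b_+$ and $b_-$, and the passage from a.e.\ concavity along geodesics to global concavity.
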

\begin{proof} It suffices to show that $b_+$ is concave on an arbitrarily given bounded open set $\Omega\subset M$. Clearly, we may assume $M$ has curvature $\gs k_\Omega$ in $\Omega$ for some constant $k_\Omega$.

In following, we divide the proof into three steps.

\underline{Step 1.}\indent Let $\gamma_x$ be the line in Lemma 6.2. Replacing equation (3.6) and (3.7) by the facts that $|b_+(y)-b_{+}(z)|=|yz|$ for any $y,z\in\gamma_x$ and $b_+$ is 1-Lipschitz,  the same proof in Lemma 3.2 shows that the lemma also  holds when we replace $f=dist_p$ by $b_+$.

\underline{Step 2.}\indent Similar as Lemma 4.1, we want to show
$H_x^{\delta}b_+=0$ almost everywhere in $\Lambda_x$, for some
subsequence $\delta=\{\delta_j\}\subset \theta$.

 We now follow the proof of Lemma 4.1. Firstly, from  Lemma 6.3, we know that both $b_+$ and $b_-$ are semi-concave.  In turn, Lemma 6.1 gives a bound for $H^\theta_x b_+$ .
  Secondly, we use Lemma 3.2 for $b_+$ (i.e., the above Step 1) and replace Theorem 3.3 by the above Lemma 6.3 in the proof of Lemma 4.1.  We repeat the same proof of Lemma 4.1 to get $H_x^{\delta}b_+=0$ almost everywhere in $\Lambda_x$, for some subsequence $\delta\subset\theta$.

 \underline{Step 3.}\indent Following the proof of Lemma 4.4,  we then deduce that  $b_+(x)$ is concave in $\Omega.$ Therefore $b_+(x)$ is concave in $M$ and the proof of the lemma is completed.
 \end{proof}

Now, we are in a position to prove Theorem 1.7.
\begin{proof}[Proof  of  Theorem 1.7]
Given a sequence $\theta=\{\theta_j\}\in\mathcal S,$ from Lemma 6.3, we can find a subsequence $\delta\subset\theta$ such that \begin{equation}\Delta^\delta b_+(x)\ls 0\qquad {\rm and }\qquad \Delta^\delta b_-(x)\ls 0.\end{equation}
By the definition of $\Delta^\delta b_+(x)$ and $\Delta^\delta b_-(x)$, we have $$\Delta^{\delta'} b_+(x)\ls \Delta^\delta b_+(x)\quad {\rm and}\quad \Delta^{\delta'} b_-(x)\ls \Delta^\delta b_-(x)$$for any subsequence $\delta'\subset \delta.$ So (6.5) holds for any subsequence $\delta'\subset\delta$.

On the other hand, by Lemma 6.1 and the definition of $\Delta^\theta
b_+(x)$, we have $$\Delta^\vartheta b_+(x)+\Delta^\vartheta
b_-(x)\gs0$$ for any sequence $\vartheta=\{\vartheta_j\}\in\mathcal
S.$ Therefore, by combining with (6.5), we get
$$\Delta^{\delta'} b_+(x)= 0\qquad {\rm and }\qquad \Delta^{\delta'}
b_-(x)= 0$$for any subsequence $\delta'\subset\delta.$

Then we can apply Lemma 6.4 to conclude that both $b_+$ and $b_-$ are concave. By using Lemma 6.1 again, we deduce that  $b_+\circ\varsigma(s)$ is a linear function on any geodesic $\varsigma(s)$ in $M$. In particular, the level surfaces  $\mathcal L(a):=b_+^{-1}(a)$ are totally geodesic for all $a\in \mathbb{R}$.

Set $N=\mathcal L(0)=b_+^{-1}(0)$. It is an Alexandrov space with curvature bounded below locally.

When $M$ is an Alexandrov space with curvature $\gs -\kappa^2$ for some $\kappa>0$. Mashiko, in [Ma], proved that if there exists a function $u$ such that $u\circ\gamma$ is a linear function for any geodesic $\gamma\subset M$ and  $u\in D^{2,2}$ (see [Ma] for the definition of the class of $D^{2,2}$), then  $M$ is isometric to a direct product $\mathbb{R}\times Y$ over an Alexandrov space $Y$ has curvature $\gs-\kappa^2.$  Later in [AB], Alexander and Bishop  removed the condition $u\in D^{2,2}$.

Since we do not assume that $M$ has a \emph{uniform} lower curvature bound, we adapt Mashiko's  argument  as follows.

 For any $x\in N$ and any $a\in \mathbb{R}$, let $\gamma_x$ be the line obtained in Lemma 6.2.

 Note that $(b_+\circ\gamma_x)(s)'=1$ which implies $\nabla b_+(\gamma_x(s))=\gamma_x^+(s)$. Thus $\gamma_x$ is a gradient curve of $b_+$.

  It is easy to check that $\gamma_x\cap\mathcal L(a)$ is a set of single point.
 We define $\Phi_a: N\rightarrow\mathcal L(a)$ by $\Phi_a(x)=\gamma_x\cap \mathcal L(a).$ $\Phi_a$ and $\Phi_a^{-1}$ are the gradient flows of $b_+$ and $b_-$, respectively.
Since a gradient flow of a concave function is non-expanding, we have that $\Phi_a$ is an isometry.

Now we are ready to show that $M$ is isometric to the direct product $\mathbb{R}\times N$. Consider any  two points $x,y\in M$.

Without loss of generality, we may assume that $x\in N$ and $y\in\mathcal L(a)$ with $a>0$. Let $z=\gamma_y\cap N$, where $\gamma_y$ comes from Lemma 6.2.

We take a $C^1$ curve $\sigma(s)\subset N$ with $\sigma(0)=x$ and $\sigma(Length(\sigma))=z,$ $|\sigma'(s)|=1.$ Define  a new curve $\bar\sigma(s)$ by $$\bar\sigma(s)=\gamma_{\sigma(s)}\Big(\frac{a}{length(\sigma)}\cdot s\Big).$$
Clearly, we have $\bar\sigma(0)=x$, $\bar\sigma(length(\sigma))=\gamma_z(a)=y$ and \begin{equation}b_+(\bar\sigma(s))=\frac{a}{length(\sigma)}\cdot s.\end{equation}

Fixed any $s\in (0,Length(\sigma))$, we set $u=\sigma(s)$ and $v=\bar\sigma(s)$.

We claim that \begin{equation}\angle(\nabla_u b_+,\sigma^+(s))=\angle(\uparrow_u^v,\sigma^+(s))=\pi/2.\end{equation}

Indeed, $$|v\sigma(s')|\gs b_+(v)-b_+(\sigma(s'))=b_+(v)=|vu|$$for any $s'\in (0,Length(\sigma))$. Then by the first variation formula of arc-length, we have \begin{equation} \angle(\uparrow_u^v,\sigma^+(s))\gs\pi/2\quad {\rm and}\quad  \angle(\uparrow_u^v,\sigma^-(s))\gs\pi/2.\end{equation} On the other hand,  \begin{equation} \angle(\uparrow_u^v,\sigma^+(s))+\angle(\uparrow_u^v,\sigma^-(s))=\pi. \end{equation}
Thus the desired (6.7) follows from (6.8) and (6.9).

Now let us calculate the length of the curve $\bar\sigma.$

Clearly, we may assume that a neighborhood of $\bar\sigma$ has curvature $\gs k$  (for some $k<0$).

Fixed $s\in (0,length(\sigma))$. Let $h>0$ be a small number. We set $\bar w=\bar\sigma(s+h)$ and $w=\gamma_{\sigma(s+h)}\big(\frac{a}{length(\sigma)}\cdot s\big)$ (see figure 1).\\
\begin{figure}[h]
\centering
\includegraphics[scale=0.80, bb=115 100 315 200]{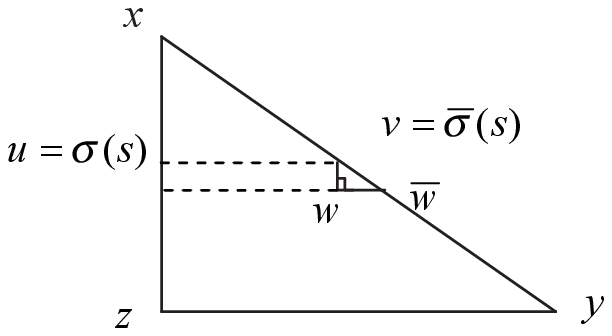}
\caption{ }
\end{figure}\\
 By cosine law  in $0-$plane $\mathbb{R}^2$, we have
 \begin{equation}|\bar \sigma(s+h)\bar\sigma(s)|^2=|v\bar w|^2=|vw|^2+|w\bar w|^2-2|w\bar w|\cdot|vw|\cdot\cos\wa_0 vw\bar w.\end{equation}
Note that
\begin{equation}|v\bar w|=|\sigma(s)\sigma(s+h)|=|\sigma^+(s)\cdot h+o(h)|=h+o(h),\end{equation}
\begin{equation}|w\bar w|=(b_+(\bar w)-b_+(w))=\frac{a}{Length(\sigma)}\cdot h.\end{equation}
By using Lemma 11.2 in [BGP], we have \begin{equation}\wa_k vw\bar w\to  \angle vw\bar w=\pi/2\end{equation} as $h\to0.$ On the other hand, note that \begin{equation}\wa_0 vw\bar w-\wa_k vw\bar w\to 0\end{equation} as $h\to0.$ We have $\cos\wa_0 vw\bar w\to0$ as $h\to0.$\\
Combining this and (6.10)--(6.12), we have
\begin{equation}|\bar \sigma(s+h)\bar\sigma(s)|^2=\Big(1+\big(\frac{a}{Length(\sigma)}\big)^2\Big)\cdot h^2+o(h^2).\end{equation}
Hence, $$|\bar \sigma(s)|^+=\Big(1+\big(\frac{a}{Length(\sigma)}\big)^2\Big)^{1/2}.$$
Similarly, we can get $$|\bar \sigma(s)|^-=\Big(1+\big(\frac{a}{Length(\sigma)}\big)^2\Big)^{1/2}.$$
So \begin{equation}Length(\bar\sigma)=\int_0^{length(\sigma)}|\bar\sigma|'ds=\Big(a^2+\big(Length(\sigma)\big)^2\Big)^{1/2}.\end{equation}

If we take $\sigma_1$ to be a  geodesic $xz$,  we get, from (6.16), that \begin{equation}|xy|^2\ls (Length(\bar\sigma_1))^2=|xz|^2+a^2=|xz|^2+|yz|^2.\end{equation}
While if we take  $\sigma_2$ to be the projection of a geodesic $xy$ to $N$,  we get, from (6.16), that
\begin{equation}|xy|^2=(Length(\sigma_2))^2+a^2\gs|xz|^2+|yz|^2.\end{equation}
The combination of  (6.17) and (6.18) implies that \begin{equation}|xy|^2=|xz|^2+|yz|^2.\end{equation}
This says that $M$ is isometric to the direct product $N\times \mathbb{R}$.

Lastly, we need  prove that $N$ has nonnegative Ricci curvature.

Let $\gamma(t):(-\ell,\ell)\to N$ be a geodesic in $N$. Assume that $N$ has curvature $\gs K$ in a neighborhood of $\gamma$ and for some $K<0$. Otherwise, there is nothing to prove. Hence $M$ has curvature $\gs K$ in a neighborhood of $\gamma$ in $M$.

Let $p$ and $q$ be two interior points in $\gamma$. We denote the tangent spaces, exponential map in $N$ (or $M$, resp.) by $T_pN$, $\exp_p^N$ (or $T_pM=T_{(p,0)}M$, $\exp_p^M=\exp_{(p,0)}^M$, resp.) and $$\Lambda_p^N=\{\xi\in T_p^N:\ \ip{\xi}{\gamma'}=0\}.$$

Let $\Lambda_p^M:=\Lambda_{(p,0)}^M=\{\xi\in T_p^M:\ \ip{\xi}{\gamma'}=0\}$. Then $\Lambda_p^M=S(\Lambda_p^N)$ with vertex $\zeta^\pm$, where $\zeta^\pm$ are the directions along factor $\R$ in $M=N\times\R $. For any $\xi\in T_pN$, we have \be{equation}{
\exp_p^M(\xi,t)=(\exp_p^N(\xi),t).}

Suppose that a family of continuous functions $\{g_{(\gamma(t),0)}(\xi,\eta)\}_{-\ell<t<\ell}$  on $\Lambda_p^M=S(\Lambda_p^N)$ satisfies Condition $(RC)$ on geodesic $(\gamma(t),0)$ in $M$ and $$\int_{\Lambda_p^M} g_{(\gamma(t),0)}(\xi,\eta)d{\rm vol}_{\Lambda_p^M}\gs -\epsilon$$ for a given small number $\epsilon>0$.

Given a sequence $\{\wt s_j\}\in\mathcal S$, the isometry $T: L^M_{(p,0)}\to L^M_{(q,0)}$ and subsequence $\{s_j\}\subset\{\wt s_j\}$ come from the definition of Condition $(RC)$. Recall Petrunin's construction for $T$, we can assume that $T(\zeta^+)=\zeta^+,$ hence $T:L^N_p\subset T_p^N\to L^N_q\subset T_q^N.$

Given a quasi-geodesic $\sigma(s)$ in $N$, setting $\bar\sigma(s)=(\sigma(as),bs)$ for any two number $a,b\in \R$ with $a^2+b^2=1$, we will prove that $\bar\sigma(s)$ is a quasi-geodesic  in $M$.

Let $u(z,r)$ be a $\lambda-$concave function, defined in a neighborhood of $\gamma$ in $M=N\times\R$. So function $u(\cdot,r)$ is $\lambda-$concave
in $N$ and $u(z,\cdot)$ is $\lambda-$concave in $\R$ for all $r\in\R $ and $z\in N$.
 Since $\sigma$ is quasi-geodesic in $N$, we have  $$ u''\big(\sigma(as),r\big)\ls a^2\cdot \lambda$$for all $r\in\R.$ Now $$u''\big(\sigma(as),bs\big)\ls ( a^2+b^2)\cdot \lambda=\lambda.$$ By definition of quasi-geodesic [Pet3], we  get that $\bar\sigma(s)$ is a quasi-geodesic  in $M$.

Fix any nonnegative number $l_1$ and $l_2$. Let $\xi\in \Lambda^N_p.$ For any constant $A\in \R$, we have (see figure 2)\begin{figure}[h]
\centering
\includegraphics[scale=0.80, bb=10 55 370 175]{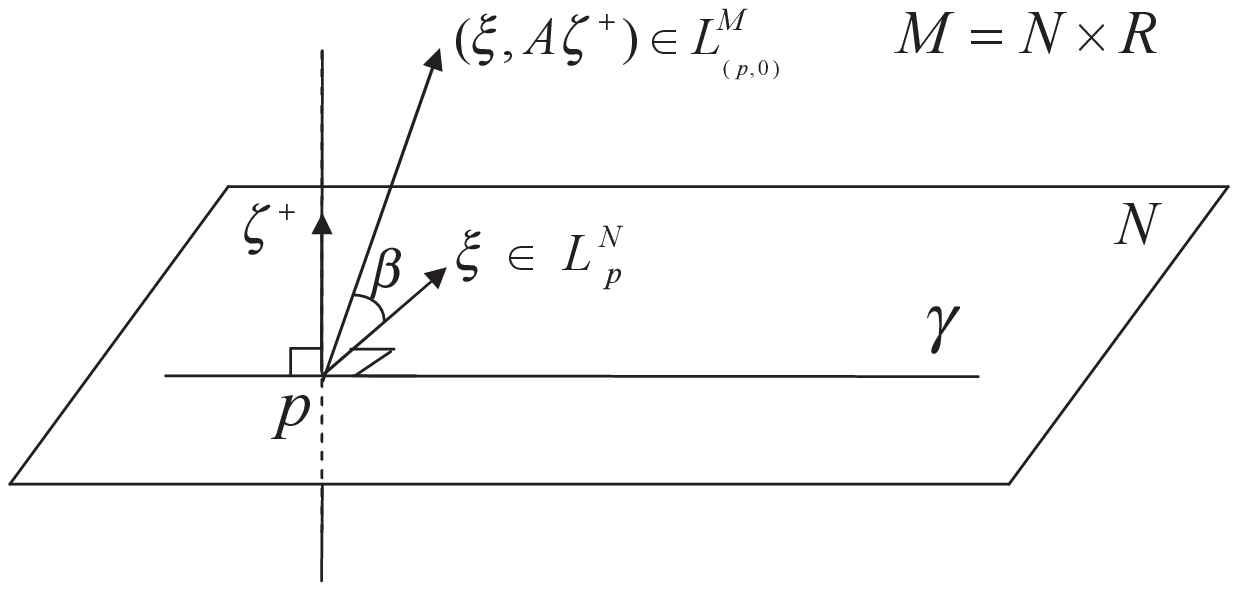}
\caption{ }
\end{figure}
\begin{equation}
\begin{split}
|&\exp^N_p(s_j\cdot l_1\xi),\exp_q^N(s_j\cdot l_2T\xi)|^2\\
&=\big|\big(\exp_p^N(s_j\cdot l_1\xi),s_jl_1A\zeta^+\big),\big(\exp_q^N(s_j\cdot l_2T\xi),s_jl_2A\zeta^+\big)\big|^2-A^2(l_1-l_2)^2\cdot s^2_j\\
&=\big|\exp_p^M\big(s_j\cdot l_1(\xi,A\zeta^+)\big),\big(\exp_q^M\big(s_j\cdot l_2(T\xi,A\zeta^+)\big)\big|^2-A^2(l_1-l_2)^2\cdot s^2_j\\
&\ls |pq|^2+s^2_j\cdot\Big((l_1-l_2)^2-\frac{g_p(\xi,A\zeta^+)\cdot(1+A^2)}{3}|pq|^2(l^2_1+l_1\cdot l_2+l^2_2)\Big)\\&\quad+o(s^2_j).\end{split}
\end{equation}
We set $\beta=\angle\big((\xi,0),(\xi,A\zeta^+)\big)$ and then $A=\tan \beta$, $\beta\in(-\pi/2,\pi/2)$.

For each $t\in(-\ell,\ell)$ and $A\in \R$, we define a function $g_{A,\gamma(t)}:\Lambda^N_{\gamma(t)}\to\R$ by \begin{equation}\be{split}{g_{A,\gamma(t)}(\xi):&= g_{(\gamma(t),0)}(\xi,A\zeta^+)\cdot(1+A^2)\\
&=g_{(\gamma(t),0)}(\xi,A\zeta^+)/\cos^2\beta.}\end{equation}
>From (6.21), for any $A\in\R$, the family of continuous functions $\{g_{A,\gamma(t)}(\xi)\}_{-\ell<t<\ell}$ satisfies Condition $(RC)$ on $\gamma$.

On the other hand, we have \begin{equation}
\begin{split}
-\epsilon&\ls\int_{\Lambda_p^M} g_{(\gamma(t),0)}(\xi,\eta)d{\rm vol}_{\Lambda_p^M}\\
&=\int_{\Lambda_p^N}\int^{\pi/2}_{-\pi/2}g_{(\gamma(t),0)}(\xi,\eta)\cos^{n-2}\beta d\beta d{\rm vol}_{\Lambda_p^N}\\
&= \int_{\Lambda_p^N}\int^{\pi/2}_{-\pi/2}g_{A,\gamma(t)}(\xi)\cos^{n}\beta d\beta d{\rm vol}_{\Lambda_p^N}\\
&= \int^{\pi/2}_{-\pi/2}\int_{\Lambda_p^N}g_{A,\gamma(t)}(\xi)\cos^{n}\beta  d{\rm vol}_{\Lambda_p^N}d\beta.
\end{split}\end{equation}
Thus, we can choose some $A\in \R$ such that  $$\int_{\Lambda_p^N}\big(g_{A,\gamma(t)}(\xi)\big) d{\rm vol}_{\Lambda_p^N}\gs-c_n\cdot\epsilon,$$for some constant $c_n$.
This completes the proof that $N$ has nonnegative Ricci curvature.
Therefore the proof of Theorem 1.7 is completed. \end{proof}

\appendix
\section{}
In the Appendix, we will recall the definition of
\emph{curvature-dimension condition} $CD(n,k)$ which is given by
Sturm [S2] and Lott--Villani [LV1] (see also book [V]). After that we
will present  a proof, due to Petrunin [Pet2], for the statement
that  an $n$-dimensional Alexandrov space with Ricci curvature $\gs
(n-1)K$ and with $\partial M=\varnothing$  must satisfy
$CD(n,(n-1)K).$

 Let $(X,d,m)$ be a metric measure space, where $(X,d)$ is a complete separable metric space.

 Given two measures $\mu$ and $\nu$ on $X$,  a measure $q$
on $X\times X$ is called  a \emph{coupling} (or \emph{transference plan}) of $\mu$ and $\nu$ if $$q(A\times X)=\mu(A)\qquad{\rm and}\qquad q(X\times A)=\nu(A)$$for all measurable $A\subset X$.

 The $L^2-Wasserstein\ distance$ between two  measures $\mu,\nu$ is defined by $$ d^2_W(\mu,\nu)=\inf_q\int_{X\times X}d^2(x,y)dq(x,y)$$
 where infimum runs over all coupling $q$ of $\mu$ and $\nu$. (If $\mu(X)\not=\nu(X)$, we set $d_W(\mu,\nu)=+\infty$.)

  Let $\mathcal P_2(X)$ be the space of all probability measures $\nu$ on $X$ with finite second moments: $$\int_Xd^2(o,x)d\nu(x)<\infty$$ for some (hence all) point $o\in X$.

 $L^2-$Wasserstein space is a complete metric space $(\mathcal P_2(X),d_W)$.
 (see [S1] for the geometry of  $L^2-$Wasserstein space.) Fix a Borel measure $m$ on $X$.
 We denote $L^2-$Wasserstein space by $\mathcal P_2(X,d)$ and its subspace of $m-$absolutely continuous measures is denoted by $\mathcal P_2(X,d,m)$.\\

  Given  $k\in\mathbb{R}$, $n\in(1,\infty]$, $t\in[0,1]$ and two points $x,y\in X$, we define $\beta_t^{(k,n)} $ as follows:\\
   \indent (1)\indent If $0<t\ls 1$, then \begin{equation*}\beta_{t}^{(k,n)}(x,y):=
 \begin{cases}\exp\big(\frac{k}{6}(1-t^2)\cdot d^2(x_0,x_1)\big)& {\rm if }\ n=\infty,\\ \infty & {\rm if }\ n<\infty,\ k>0\ \ {\rm and}\  \alpha\gs \pi,\\  \big(\frac{\sin(t\alpha)}{t\sin\alpha}\big)^{n-1}& {\rm if }\ n<\infty,\ k>0\ \ {\rm and}\ \alpha\in[0, \pi),\\
1& {\rm if }\ n<\infty, \ k=0, \\
 \Big(\frac{\sinh(t\alpha)}{t\sinh\alpha}\Big)^{n-1}& {\rm if }\ n<\infty,\ k<0,\end{cases}
 \end{equation*}  where $\alpha=d(x,y)\cdot\sqrt{|k|/(n-1)}$.\\
\indent (2)\indent $\beta_0^{(k,n)}(x,y)=1.$

The curvature-dimension condition $CD(n,k)$ is defined as follows
(see 29.8 and 30.32 in [V]):
\begin{defn} Let $(X,d,m)$ be a non-branching locally compact complete separable geodesic space equipped with a locally finite measure $m$\footnote{Lott--Villani and Sturm defined curvature dimension condition on general metric measure spaces.}.

  Given two  real numbers $k$ and $n$ with $n>1$,  The metric measure space $(X,d,m)$ is said to satisfy the \emph{curvature-dimension condition }$CD(n,k)$ if and only if
for
each pair  compactly supported  $\mu_0,\mu_1\in\mathcal P_2(X,d,m)$ there exist an optimal coupling $q$ of $\mu_0=\varrho_0 m$ and $\mu_1=\varrho_1 m$,
and a geodesic path\footnote{constant-speed shortest curve defined on $[0,1]$.} $\mu_t: [0, 1]\rightarrow \mathcal P_2(X,d)$ connecting $\mu_0$ and $\mu_1$, with
\begin{equation}\begin{split}
H_{n}(\mu_t|m)\ls& -(1-t)\int_{X\times
X}\Big(\frac{\varrho_0(x)}{\beta^{(k,n)}_{1-t}(x,y)}\Big)^{-1/n}dq(x,y)\\
&\ -t\int_{X\times
X}\Big(\frac{\varrho_1(y)}{\beta^{(k,n)}_{t}(x,y)}\Big)^{-1/n}dq(x,y)
\end{split}\end{equation}for all $t\in[0,1]$, where $H_{n}(\cdot|m): \mathcal P_2(X,d)\rightarrow\mathbb{R}$ is
\emph{R\'{e}nyi entropy functional} with respect to $m$,
$$H_{n}(\mu|m):=-\int_X\varrho^{-1/n}d\mu$$ and $\varrho$ denotes the
density of the absolutely continuous part  in the Lebesgue
decomposition $\mu=\varrho m+\mu^c$ of $\mu.$
\end{defn}

>From now on, in the appendix, $M$ will always denote an
$n$-dimensional Alexandrov space with $Ric(M)\gs (n-1)K$ and
$\partial M=\varnothing.$

Our purpose of this appendix is to prove the following proposition,
which is essentially due to Petrunin [Pet2].
\begin{prop} Let $M$ be  an $n$-dimensional Alexandrov space without boundary and $Ric(M)\gs (n-1)K$. Let $vol$ denote the $n$-dimensional Hausdorff measure on $M$. Then the metric measure space $(M, |\cdot\cdot|, vol)$ satisfies $CD(n,(n-1)K).$\end{prop}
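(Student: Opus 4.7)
The strategy is to follow Petrunin's argument in [Pet2]---which establishes $CD(n,(n-1)K)$ for $n$-dimensional Alexandrov spaces with curvature $\gs K$---and to modify it at the single point where a curvature hypothesis enters, replacing the sectional-curvature comparison by the Laplacian comparison Theorem 3.3, which we have established under the weaker Ricci curvature hypothesis. Apart from this substitution, essentially no step of Petrunin's argument needs to be changed.

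Concretely, I would first reduce (as in [Pet2]) the $CD(n,(n-1)K)$ inequality (A.1) to a pointwise inequality on the Jacobian $J_t(x)$ of the displacement map $T_t(x)=\exp_x(t\nabla\varphi(x))$, where $\varphi$ is a $d^2/2$-concave Kantorovich potential associated with an optimal coupling between $\mu_0 = \rho_0\cdot{\rm vol}$ and $\mu_1 = \rho_1\cdot{\rm vol}$. The required pointwise estimate, valid at ${\rm vol}$-a.e. $x$, reads
$$J_t^{1/n}(x) \gs (1-t)\cdot\beta_{1-t}^{((n-1)K,n)}(x,T_1 x)^{1/n} + t\cdot\beta_t^{((n-1)K,n)}(x,T_1 x)^{1/n}\cdot J_1^{1/n}(x);$$
integrating this inequality against $\rho_0^{1-1/n}\,d{\rm vol}$ and using the transport identity $\rho_t(T_t(x))\,J_t(x) = \rho_0(x)$ to rewrite each side recovers (A.1) verbatim.

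The next step is to verify this Jacobian inequality along each optimal transport geodesic $\gamma(t):=T_t(x)$. It reduces to a one-dimensional ODE comparison for $J_t^{1/n}$, and this ODE comparison is exactly the trace form of Theorem 3.3 applied (in the gradient-flow coordinates of $\varphi$) to a distance function $dist_p$ with $p$ on the extension of $\gamma$ beyond $x$: averaging the Hessian of $dist_p$ over $\Lambda_{\gamma(t)}$ gives
$$\Delta^\delta dist_p(\gamma(t)) \ls (n-1)\cdot\cot_K(|p\gamma(t)|),$$
which is the infinitesimal content of the desired Jacobian concavity, and the distortion factors $\beta_{1-t}^{((n-1)K,n)}$ and $\beta_t^{((n-1)K,n)}$ arise as the fundamental solutions of the associated Jacobi-type ODE.

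The main obstacle is technical: the Jacobian $J_t$, the Hessian of $\varphi$, and the pointwise trace of this Hessian need not exist at individual points of the singular space $M$, so the pointwise estimates above must be interpreted in a sense compatible with the almost-everywhere Laplacian representation $\Delta^\delta$ from Section 3. The sequence $\delta$ produced by Theorem 3.3 depends on the chosen base point, so to propagate control across all transport geodesics simultaneously one combines Perelman's almost-everywhere regularity of semi-concave functions (which ensures that $Hess_x\varphi$ exists at ${\rm vol}$-a.e. $x$) with the segment inequality (Proposition 4.2) applied to a suitable exceptional set. All of these tools already appear in Petrunin's proof and carry over essentially without change, so the modification truly reduces to inserting Theorem 3.3 at the one step where a lower curvature bound is needed.
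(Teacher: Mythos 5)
Your high-level plan — follow Petrunin's [Pet2] argument and change only the step where a curvature lower bound enters — is the right one, and it is what the paper does. But the specific modification you describe is off in a way that matters. You claim the one-dimensional ODE comparison for $J_t^{1/n}$ along a transport geodesic ``is exactly the trace form of Theorem 3.3 applied \dots to a distance function $dist_p$ with $p$ on the extension of $\gamma$ beyond $x$.'' This is not correct: for a general absolutely continuous $\mu_0$ (as opposed to $\mu_0=\delta_p$), the Jacobian of the displacement interpolation is governed by the Hessian of the Hamilton--Jacobi shift $\phi_t=\mathcal H_{1-t}(-\psi)$ of the Kantorovich potential along the transport geodesic, not by $\Delta\,dist_p$ from a single point. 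The initial condition $Hess_x\varphi$ enters in an essential way and cannot be absorbed into a distance function from some $p$. Controlling $J_t$ by $\Delta\,dist_p$ is the $MCP$-type argument, and it does not give $CD(n,(n-1)K)$.

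What the paper actually modifies is Petrunin's Proposition 2.2, replaced here by Proposition A.4: a Riccati inequality for $h_T(t)=Hess_{\gamma(t)}\phi_t(\gamma^+,\gamma^+)/|\gamma^+|^2$ and $h_V(t)=\mathrm{Trace}_{L}\,Hess_{\gamma(t)}\phi_t$ along $\phi_t$-gradient curves. The proof of A.4 is structurally parallel to Lemma~3.2/Theorem~3.3, but it is carried out for the Hamilton--Jacobi shifts $\phi_t$ using Fact A (the support inequality $f_{t_1}(x)\ls f_{t_0}(y)+|xy|^2/2(t_1-t_0)$ with equality along $\gamma$) as the substitute for the first-variation identities (3.6)--(3.7) for $dist_p$, and then feeds in Condition $(RC)$ and the Ricci average $\oint_{\Lambda_{\gamma(t)}}g_{\gamma(t)}\gs K-\epsilon$ directly. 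Theorem~3.3 itself is never invoked in the appendix. So the correct statement is: the argument of Section~3 must be \emph{redone} for Hamilton--Jacobi shifts, not \emph{inserted} as Theorem~3.3. Once Proposition~A.4 is in place, the rest of your outline (decompose $h_t=h_T+h_V$, exponentiate, combine the tangential and vertical factors via H\"older, and integrate against $\Pi$) does follow Petrunin verbatim.

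As a minor point, the segment inequality (Proposition~4.2) plays no role in the appendix; the technical work you are worried about (absolute continuity of $\mu_t$, bi-Lipschitz property of the evaluation map $e_t$, a.e.\ differentiability of $w_t$) is already done in [Pet2] and is simply cited.
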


>From [S2], we know that the curvature-dimension condition
$CD(n,(n-1)K)$ implies Bishop--Gromov volume comparison theorem.
Consequently, we get the following
\begin{cor}
Let $M$ be as in above proposition. Then the function, for any $p\in M$,
$$\frac{vol\ B_p(r)}{vol \ B^n_K(r)}$$
is non-increasing in $r>0$, where $B^n_K(r)$ is a geodesic ball of radius $r$ in the $n$-dimensional simply connected Riemannian manifold with constant sectional curvature $K$.
\end{cor}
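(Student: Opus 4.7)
The plan is to deduce the corollary directly from Proposition A.2 together with Sturm's implication in [S2]. Proposition A.2 asserts that the metric measure space $(M, |\cdot\cdot|, {\rm vol})$ satisfies the curvature-dimension condition $CD(n,(n-1)K)$. Sturm proved in [S2] (see also Chapter 30 of [V]) that on any non-branching metric measure space the condition $CD(n,(n-1)K)$ implies the classical Bishop--Gromov volume comparison, namely the monotonicity of $r\mapsto {\rm vol}(B_p(r))/{\rm vol}(B^n_K(r))$ in $r>0$. Since every Alexandrov space is non-branching (as recalled in the Introduction), the corollary follows immediately.

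For orientation, the mechanism underlying Sturm's step can be summarized as follows. Fix $p\in M$ and radii $0<r_1<r_2$, take $\mu_0$ to be a normalized indicator of a small ball around $p$ (eventually sent to $\delta_p$) and set $\mu_1={\rm vol}|_{B_p(r_2)}/{\rm vol}(B_p(r_2))$. Since $M$ is non-branching, the optimal coupling $q$ is concentrated on pairs $(x_0,y)$ with $x_0$ near $p$, and the displacement interpolant $\mu_t$ is essentially supported in $B_p(tr_2)$. Applying the R\'{e}nyi-entropy inequality (A.1) at time $t=r_1/r_2$ and passing to the limit $\mu_0\to\delta_p$, the $(1-t)$-term becomes negligible and one extracts a lower bound on $\mu_{r_1/r_2}(B_p(r_1))$ proportional to ${\rm vol}(B_p(r_2))^{-1}\int_{B_p(r_2)}\beta_{r_1/r_2}^{((n-1)K,n)}(p,y)\,d{\rm vol}(y)$. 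A direct coarea-type identification of this integral with ${\rm vol}(B^n_K(r_1))/{\rm vol}(B^n_K(r_2))$ yields exactly the asserted monotonicity.

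The substantive work for this corollary has already been carried out in Proposition A.2, where the condition $Ric(M)\gs(n-1)K$ from Section 1 together with Petrunin's argument in [Pet2] combine to verify $CD(n,(n-1)K)$. Once that is in hand, the passage to Bishop--Gromov is entirely an optimal-transport fact about non-branching metric measure spaces and needs no further Alexandrov-geometric input. In particular, there is no genuine obstacle remaining for this corollary beyond what is dealt with in the preceding proposition, so the proof reduces to a one-step combination of Proposition A.2 with the cited theorem of [S2].
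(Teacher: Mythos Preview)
Your proposal is correct and follows exactly the paper's approach: the paper derives Corollary A.3 immediately from Proposition A.2 together with the implication in [S2] that $CD(n,(n-1)K)$ entails Bishop--Gromov volume comparison. Your additional paragraph outlining Sturm's argument is more detail than the paper provides, but it is accurate and consistent with the intended reasoning.
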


Before beginning the proof of Proposition A.2, let us review some indispensable materials.

For a continuous function $f$, we define its \emph{Hamilton--Jacobi shift} $\mathcal{H}_tf$ for time $t>0$ by $$\mathcal{H}_tf\overset{def}{=}\inf_{y\in M}\big\{f(y)+\frac{1}{2t}|xy|^2\big\}.$$
Denote by $f_t = \mathcal H_tf$. A solution of $\alpha^+(t)=\nabla_{\alpha(t)}f_t$ is called a $f_t-$\emph{gradient curve}.

Refer to [Pet2] for the existence and uniqueness of $f_t-$gradient curve and basic propositions of Hamilton--Jacobi shifts. Now we list only  facts that is necessary for us to prove the above Proposition A.2.\\
\textbf{Fact A:}$\ $ Let $f:M\rightarrow \mathbb{R}$ be bounded and continuous function and $f_t=\mathcal H_tf.$ Assume $\gamma:(0,1)\rightarrow M$ is a $f_t-$gradient curve which is also a constant-speed shortest curve. We  have :\\
\indent$(i)\quad$ $f_{t_1}(x)\ls f_{t_0}(y)+\frac{|xy|^2}{2(t_1-t_0)}$ for any $t_1>t_0>0$ and $x,y\in M$;\\
\indent$(ii)\quad$ $f_{t_1}(\gamma(t_1))= f_{t_0}(\gamma(t_0))+\frac{|\gamma(t_0)\gamma(t_1)|^2}{2(t_1-t_0)}$;\\
\indent$(iii)\quad$ $\nabla f_t=\gamma^+$ and $|\nabla f_t|=\frac{|\gamma(t_0)\gamma(t_1)|}{t_1-t_0}=|\gamma(0)\gamma(1)|$.

The following result is a modification of the proposition 2.2 in [Pet2], where we replace the condition curvature $\gs K$ by the condition $Ric(M)\gs (n-1)K.$
\begin{prop}
Let $M$ be an $n$-dimensional Alexandrov space with Ricci curvature
$\gs (n-1)K$. $f:M\rightarrow \mathbb{R}$ be bounded and continuous
function and $f_t=\mathcal H_tf.$ Assume $\gamma:(0,1)\rightarrow M$
is a $f_t-$gradient curve which is also a constant-speed shortest curve.
Suppose that the bilinear form $Hess_{\gamma(t)}f_t$ is defined for
almost all $t\in(0,1).$

 Then \begin{align*}
 h_T'&\ls-h^2_T,\\
 h_V'
&\ls -(n-1)K|\gamma(0)\gamma(1)|^2-\frac{h_V^2}{n-1}\end{align*} in the sense of distributions, where $$h_T(t)\overset{def}{=}Hess_{\gamma(t)}f_t\big(\frac{\gamma^+}{|\gamma^+|}, \frac{\gamma^+}{|\gamma^+|}\big)$$
and $h_V$ is the trace of $Hess_{\gamma(t)}f_t$ in the vertical  space $L_{\gamma(t)}$, i.e.,
$$h_V(t)\overset{def}{=}Trace_L Hess_{\gamma(t)}f_t.$$
\end{prop}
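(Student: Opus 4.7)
The plan is to derive both Riccati inequalities from the Hamilton--Jacobi semigroup identity $f_t = \mathcal{H}_{t-s} f_s$ (a direct consequence of Fact A), combined with the second variation formula of arc-length packaged in Condition (RC). The guiding principle is that the propagation of $\mathrm{Hess}\, f_t$ along the gradient curve $\gamma$ mirrors the smooth Raychaudhuri evolution, which decomposes into a tangential 1-dimensional Riccati and a transverse Riccati driven by Ricci curvature.

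Concretely, fix a small $\tau>0$ with $s+\tau=t$ and a direction $\xi\in\Sigma_{\gamma(t)}$. For perturbations $x=\exp_{\gamma(t)}(\eta_2\xi)$ and $y=\exp_{\gamma(s)}(\eta_1 T\xi)$ (with $T$ the isometry from Condition (RC) when $\xi\in\Lambda_{\gamma(t)}$), the Hopf--Lax sandwich $f_t(x)\ls f_s(y)+|xy|^2/(2\tau)$ is sharp at $(\eta_1,\eta_2)=(0,0)$ by Fact A (ii). Taylor-expanding $f_s(y)$ with $\alpha:=\mathrm{Hess}_{\gamma(s)}f_s(T\xi,T\xi)$ and estimating $|xy|^2$ via (1.1), then infimizing over $\eta_1$ and reading off the $\eta_2^2$-coefficient, yields the pointwise two-time comparison
\begin{equation*}
\mathrm{Hess}_{\gamma(t)}f_t(\xi,\xi)\ls \alpha-\tau\bigl[\alpha^2+c^2(g_{\gamma(t)}(\xi)-\epsilon)\bigr]+O(\tau^2),
\end{equation*}
where $c:=|\gamma(0)\gamma(1)|$. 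This is the discrete analogue of the Raychaudhuri equation.

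Specializing to the tangent direction $\xi=e:=\gamma^+/|\gamma^+|$---where Condition (RC) is replaced by the exact 1-dimensional Hopf--Lax computation $\mathcal{H}_\tau(\tfrac{A}{2}r^2)=\tfrac{A}{2(1+A\tau)}r^2$ producing the Hessian evolution $A\mapsto A/(1+A\tau)$ with no curvature input---gives $h_T(t)\ls h_T(s)-\tau\, h_T(s)^2+O(\tau^2)$, the distributional form of $h_T'\ls -h_T^2$. For the transverse trace, I would average the pointwise comparison over $\xi\in\Lambda_{\gamma(t)}$: the isometry property of $T$ gives $(n-1)\oint_\Lambda\alpha=h_V(s)$; Jensen's inequality applied to $x\mapsto x^2$ gives $(n-1)\oint_\Lambda\alpha^2\gs h_V(s)^2/(n-1)$, playing the role of the Cauchy--Schwarz trace bound $\mathrm{tr}(\mathrm{Hess}_L^2)\gs h_V^2/(n-1)$; and the Ricci condition $\oint_\Lambda g_{\gamma(t)}\gs K-\epsilon'$ combined with $\epsilon,\epsilon'\to 0^+$ yields
\begin{equation*}
h_V(t)\ls h_V(s)-\tau\bigl[h_V(s)^2/(n-1)+(n-1)Kc^2\bigr]+O(\tau^2),
\end{equation*}
the distributional form of the second Riccati inequality.

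The main technical obstacle is coordinating the several subsequence selections with the regularity hypothesis. The isometry $T$ from Condition (RC) and the trace comparison from Theorem 3.3 are each only asserted along a subsequence that depends on both the base point and the prescribed input sequence, while $\mathrm{Hess}_{\gamma(s)}f_s$ is only assumed to exist for a.e. $s$. A diagonal extraction over a countable dense set of reference times $s$, together with the monotonicity $H^{\delta'}_x\ls H^\delta_x$ for $\delta'\subset\delta$ (which permits passing to a common subsequence for any finite collection of base points), resolves these compatibility issues. The final passage from the two-time pointwise comparison to the distributional inequalities is then a routine measure-theoretic step.
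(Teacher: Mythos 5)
Your proposal is correct and follows essentially the same path as the paper's proof: the Hopf--Lax sandwich from Fact A together with Fact A(ii) to fix the zeroth-order terms, the second variation estimate (1.1) to compare displacements at the two times, Taylor expansion of the Hessians, optimization over the auxiliary parameter $l=\eta_1/\eta_2$, then averaging over $\Lambda_{\gamma(t)}$ using that the transfer map $T$ is an isometry, Jensen's inequality for the quadratic term, and the hypothesis $\oint_\Lambda g_{\gamma(t)}\gs K-\epsilon$. The only differences are cosmetic: you take the true infimum in $l$ (giving the clean two-time comparison with $\alpha^2$), whereas the paper substitutes a specific near-optimal $l$ depending on $\mathrm{Hess}_{\gamma(t_1)}f_{t_1}$ to obtain the symmetric product form (A.6) and then the averaged form with $\tfrac12(h_V^2(t_0)+h_V^2(t_1))$; and your exact 1D Hopf--Lax identity $\mathcal H_\tau(\tfrac{A}{2}r^2)=\tfrac{A}{2(1+A\tau)}r^2$ is a tidy repackaging of the paper's tangential step obtained from the same $l$-inequality without curvature input.
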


\begin{proof}
Since the bilinear form $Hess_{\gamma(t)}f_t$ is defined for almost
all $t\in(0,1)$, we know from [Pet1] that all $T_{\gamma(t)}$,
$t\in(0,1)$, are isometric to $n$-dimensional Euclidean space. In
particular, all $L_{\gamma(t)}$, $t\in(0,1)$, are isometric to
$\mathbb{R}^{n-1}$.

 Take two points $0<t_0<t_1<1,$
we may assume that $Hess_{\gamma(t)}f_t$ is defined at $t_0$ and $t_1$.

Denote by the direction $\xi_t=\gamma^+(t)/|\gamma^+(t)|$, $t\in(0,1)$. Then we have \begin{equation*}\begin{split}f_{t_0}\big(\gamma(t_0+s)\big)=&f_{t_0}\big(\gamma(t_0)\big)+s\cdot \langle\nabla f_{t_0},\gamma^+(t_0)\rangle\\&+\frac{s^2}{2}\cdot Hess_{\gamma(t_0)}f_{t_0}(\xi_{t_0},\xi_{t_0})\cdot|\gamma^+(t_0)|^2+o(s^2)\end{split}\end{equation*} and\begin{equation*}\begin{split}
f_{t_1}\big(\gamma(t_1+ls)\big)=&f_{t_1}\big(\gamma(t_1)\big)+ls\cdot \langle\nabla f_{t_1},\gamma^+(t_1)\rangle\\ &+\frac{(ls)^2}{2}\cdot Hess_{\gamma(t_1)}f_{t_1}(\xi_{t_1},\xi_{t_1})\cdot|\gamma^+(t_1)|^2+o(s^2)\end{split}\end{equation*}
for any $l\gs0$.
Combining these and  the Fact A, we get $$l^2\cdot h_T(t_1)-h_T(t_0)\ls \frac{(l-1)^2}{t_1-t_0}$$ for any $l\gs0$.

Thus, by choosing $ l=\big(1-(t_1-t_0)h_T(t_1)\big)^{-1}$ (when
$t_1-t_0$ suffices small, $1-(t_1-t_0)h_T(t_1)$ is positive), we get
$$\frac{h_T(t_1)-h_T(t_0)}{t_1-t_0}\ls-h_T(t_1)\cdot h_T(t_0).$$
That is, $$h_T'\ls-h^2_T.$$

Fix arbitrary $\epsilon>0$. By our definition of  Ricci curvature $\geqslant (n-1)K$ along $\gamma$, there exists a continuous function family $\{g_{\gamma(t)}\}_{0<t<1}\in\mathcal F$  such that \begin{equation}\oint_{\Lambda_{\gamma(t)}}g_{\gamma(t)}\gs K-\epsilon,\qquad \forall t\in(0,1).\end{equation}
We may assume $t_1-t_0$ so small that we can use equation (1.1) for some isometry $T: \Lambda_{\gamma(t_1)}\rightarrow\Lambda_{\gamma(t_0)}$ and some sequence $\{s_j\}\in \mathcal S$.

Given any direction $\eta\in\Lambda_{\gamma(t_1)},$ by setting
$\sigma_0(s)=\exp_{\gamma(t_0)}(s\cdot T\eta)$ and
$\sigma_1(s)=\exp_{\gamma(t_1)}(s\eta)$, we know from (1.1) that
\begin{equation}\begin{split}|\sigma_0(s_j)\ &\sigma_1(ls_j)|^2\ls |\gamma(t_0)\gamma(t_1)|^2\\&+\Big((l-1)^2-\frac{(g_{\gamma(t_1)}-\epsilon)\cdot|\gamma(t_0)\gamma(t_1)|^2}{3}(l^2+l+1)\Big)\cdot s_j^2+o(s_j^2).\end{split}\end{equation}

Note that \begin{equation}\begin{split} f_{t_0}(\sigma_0(s))&=f_{t_0}(\sigma_0(0))+\frac{s^2}{2}\cdot Hess_{\gamma(t_0)}f_{t_0}(T\eta,T\eta)+o(s^2),\\f_{t_1}(\sigma_1(ls))&=f_{t_1}(\sigma_1(0))+\frac{(ls)^2}{2}\cdot Hess_{\gamma(t_1)}f_{t_1}(\eta,\eta)+o(s^2)\end{split}\end{equation} for any $l\gs0$.
By combining (A.3), (A.4) and  the Fact A, we get
\begin{equation}\begin{split}l^2\cdot Hess_{\gamma(t_1)}&f_{t_1}(\eta,\eta)-Hess_{\gamma(t_0)}f_{t_0}(T\eta,T\eta)\\&\ls \frac{(l-1)^2}{t_1-t_0}-(t_1-t_0)|\gamma^+|^2\cdot\big(g_{\gamma(t_1)}(\eta)-\epsilon\big)\cdot\frac{l^2+l+1}{3}\end{split}\end{equation}
for any $l\gs0.$
 Set $\tau=t_1-t_0$ and $G=|\gamma^+(t_1)|^2\cdot(g_{\gamma(t_1)}(\eta)-\epsilon)$. By choosing
  $$ l=\big(1/\tau+\tau G/6\big)\cdot\big(1/\tau-\tau G/3-Hess_{\gamma(t_1)}f_{t_1}(\eta,\eta)\big)^{-1}$$(when $\tau$ suffices small,
  $1/\tau-\tau G/3-Hess_{\gamma(t_1)}f_{t_1}(\eta,\eta)$ and $ l$ are positive), we get
\begin{equation}\begin{split}
\big(\frac{1}{\tau}-\tau G/3\big)&\cdot\big(Hess_{\gamma(t_1)}f_{t_1}(\eta,\eta)-Hess_{\gamma(t_0)}f_{t_0}(T\eta,T\eta)\big)\\&\ls-Hess_{\gamma(t_1)}f_{t_1}(\eta,\eta)\cdot Hess_{\gamma(t_0)}f_{t_0}(T\eta,T\eta)-G+\tau^2G^2/12.
\end{split}\end{equation}

Note the simple fact that for an bilinear form $\beta(a,a)$ on a
$m-$dimensional inner product space $V^m$, $${\rm
trace}_{V^m}\beta=\frac{m}{vol(S)}\int_{S}\beta(a,a)da,$$ where $S$
is the unit sphere of $V^m$ with canonical measure. By taking trace
for $Hess_{\gamma(t_0)}f_{t_0}$ ( and $Hess_{\gamma(t_1)}f_{t_1}$ )
in $L_{\gamma(t_0)}$ (and $L_{\gamma(t_1)}$, respectively), we get,
from (A.2) and (A.6), that
\begin{equation}\begin{split} \frac{h_V(t_1)-h_V(t_0)}{\tau}\ls &-\frac{1}{2(n-1)}\big( h_V^2(t_0)+h_V^2(t_1)\big)\\&-(n-1)(K-2\epsilon)|\gamma^+(t_1)|^2+o(1)\end{split}\end{equation}
when we fix $t_1$ and let $t_0\to t_1.$

On the other hand, by setting $l=1$ in (A.5) and taking trace, we have
$$\frac{h_V(t_1)-h_V(t_0)}{\tau}\ls -(n-1)(K-2\epsilon)|\gamma^+(t_1)|^2.$$
This and (A.7) tell us that $h_V$ is locally Lipschitz almost everywhere in (0,1).

By using (A.7), the arbitrariness of $\epsilon$
and Fact A (iii), we get $$h_V'\ls-(n-1)K|\nabla
f_t|^2-\frac{h_V^2}{n-1}.$$ Therefore, we have completed the proof
of  this proposition.\end{proof}

Now we can follow Petrunin's argument in [Pet2] to prove the above
Proposition A.2.
\begin{proof} [Proof of Proposition A.2]
Let $\mu_0,\mu_1\in\mathcal P(M,d,m)$ with compactly supported sets $spt(\mu_0), spt(\mu_1)$ and $\mu_t\in\mathcal P(M,d)$ be a geodesic path. We have $$spt(\mu_t)\subset \bigcup_{x\in spt(\mu_0),\ y\in spt(\mu_1)}\gamma_{x,y}\qquad \forall t\in[0,1],$$where $\gamma_{x,y}$ is any one geodesic path between $x$ and $y$. Thus we can choose a big enough ball $B$ such that $ spt(\mu_t)\subset B$ for all $t\in [0,1]$. We can find a negative constant $k$ such that  $M$ has curvature $\gs k$ in $B$.

As shown in [V, 7.22], there is a probability measure $\Pi$ on the
space of all geodesic paths in $M$ such that  if $\Gamma = spt
(\Pi)$ and $e_t:\Gamma\rightarrow M$ is \emph{evaluation map}
$e_t(\gamma)=\gamma(t)$ then $\mu_t = (e_t)_{\#}\Pi$. Let $\Gamma $
be equipped a metric $$|\gamma \
\gamma'|_{\Gamma}:=\max_{t\in[0,1]}|\gamma(t)\gamma'(t)|.$$

According to [V, 5.10], there are a pair of optimal price functions
$\phi$ and $\psi$ on $M$ such that $$\phi(y)-\psi(x)\ls
\frac{1}{2}|xy|^2 $$ for any $x,y\in M$ and equality holds for any
$(x,y)\in spt\big((e_0,e_1)_{\#}\Pi\big).$

By considering the Hamilton--Jacobi shifts $$\psi_t=\mathcal
H_t\psi\quad {\rm and}\quad \phi_t=\mathcal H_{1-t}(-\psi),$$
Petrunin in [Pet2]  proved that,  for any $t\in(0,1)$,  $\mu_t$ is
absolutely continuous and the evaluation map $e_t$ is bi-Lipschitz
(where the bi-Lipschitz constant depends on $k$). Hence for any
measure $\chi$ on $M$, there is uniquely determined one-parameter
family of ¡°pull-back¡± measures $\chi^*_t$ on $\Gamma$ such that
$\chi^*_t(E)=\chi(e_tE)$ for any Borel subset $E\subset\Gamma.$
(Refer to [Pet2] for details),

Fix the measure  $\wt\nu=vol^*_{t_0=1/2}$ on $\Gamma$. We write
$vol^*_t=e^{w_t}\cdot\wt\nu$ for some Borel function
$w_t:\Gamma\rightarrow\mathbb{R},$ since $e_t$ is bi-Lipschitz and
$vol^*_t$ is absolutely continuous with respect to $\wt\nu$ for any
$t\in(0,1)$.

In [Pet2], Petrunin proved that, for  $\Pi-$a.e.  $\gamma\in \Gamma$, \begin{equation}w_t=\int_{t_0}^t\frac{\partial w_s}{\partial s}ds\qquad
 {\rm a.e.}\ \ t\in (0,1)\end{equation} and \begin{equation}\frac{\partial w_t}{\partial t}=h_t\qquad {\rm a.e.}\ \ t\in(0,1)\end{equation} where \begin{equation}h_t(\gamma)=Trace Hess_{\gamma(t)} \phi_t.\end{equation}

Noting that  $h_t=h_T(t)+h_V(t)$, we set $$ w^{(1)}_t=\int_{t_0}^t h_T(s)ds,\qquad B_1(t)=\exp(w^{(1)}_t)$$
 and$$ w^{(2)}_t=\int_{t_0}^t h_V(s)ds,\qquad B_2(t)=\exp\big(\frac{w^{(2)}_t}{n-1}\big).$$
By applying Proposition A.4, we get
\begin{equation}\begin{split}& B_1(t)\gs (1-t)B_1(0)+tB_1(1),\\
&B_2(t)\gs (1-t)\beta_{1-t}^{1/(n-1)}B_2(0)+t\beta_{t}^{1/(n-1)}B_2(1),
\end{split}\end{equation}
where $$\beta_t=\beta^{\big((n-1)K,n\big)}_t(\gamma(0),\gamma(1)).$$
Setting $D(t)=\exp(w_t/n)$ and using H\"{o}lder inequality$$\big(a+b\big)^{1/n}\cdot(c+d)^{(n-1)/n}\gs a^{1/n}\cdot c^{(n-1)/n}+ b^{1/n}\cdot d^{(n-1)/n}\quad \forall a,b,c,d>0,$$ we have
\begin{equation}\begin{split}D(t)&=B_1^{1/n}\cdot B_2^{(n-1)/n}\\& \gs \Big((1-t)B_1(0)+tB_1(1)\Big)^{\frac{1}{n}}\cdot \Big((1-t)\beta_{1-t}^{\frac{1}{n-1}}B_2(0)+t\beta_{t}^{\frac{1}{n-1}}B_2(1)\Big)^{\frac{n-1}{n}}\\&\gs (1-t)\beta_{1-t}^{1/n}B_1(0)B_2(0)^{(n-1)/n}+t\beta_{t}^{1/n}B_1(1)B_2(1)^{(n-1)/n}\\
&=(1-t)\beta_{1-t}^{1/n}D(0)+t\beta_{t}^{1/n}D(1).\end{split}\end{equation}

Note that  Petrunin  in [Pet2] had represented  $ H_n(\mu_t|m)$ in
terms of  $w_t(\gamma)$ as following,
$$H_n(\mu_t|m)=-\int_\Gamma\exp(w_t(\gamma)/n)\cdot ad\Pi$$ for some
non-negative Borel function $a:\Gamma\rightarrow\mathbb{R}.$ The
combination of this  with (A.12) implies the desired inequality
(A.1) in the definition of $CD(n,(n-1)K)$. Therefore we have
completed the proof of Proposition A.2.
\end{proof}

\end{document}